\date{\today} 
\documentclass[10pt]{article}
\usepackage{hyperref}
\usepackage{amssymb, amsmath}
\usepackage{stmaryrd}
\usepackage{latexsym}
\usepackage{abstract}

\addtolength\textwidth{3cm}
\addtolength\textheight{1cm}
\addtolength\oddsidemargin{-2cm}
\addtolength\evensidemargin{-2cm}

\usepackage{enumitem}
\setlist[enumerate,1]{label={\rm (\alph*)}}
\setlist[enumerate,2]{label={\rm (\roman*)}}

\newlist{inlinelist}{enumerate*}{1}
\setlist*[inlinelist,1]{%
  label=({\rm \arabic*}),
}



\usepackage{amsthm}

\font\tengoth=eufm10 at 10pt
\font\sevengoth=eufm7 at 6pt
\newfam\gothfam
\textfont\gothfam=\tengoth
\scriptfont\gothfam=\sevengoth

\newcommand{\g}{{\mathfrak g}}

\newcommand{\h}{{\mathfrak h}}

\newcommand{\ff}{{\mathfrak f}}
\newcommand{\fg}{{\mathfrak g}}
\newcommand{\fh}{{\mathfrak h}}

\newcommand{\fk}{{\mathfrak k}}

\newcommand{\fn}{{\mathfrak n}}

\newcommand{\fq}{{\mathfrak q}}

\newcommand{\1}{\mathbf{1}}

\newcommand{\cD}{\mathcal{D}}
\newcommand{\cE}{\mathcal{E}}

\newcommand{\cH}{\mathcal{H}}

\newcommand{\cL}{\mathcal{L}}

\newcommand{\cN}{\mathcal{N}}

\newcommand{\cS}{\mathcal{S}}

\newcommand{\cV}{\mathcal{V}}

\newcommand{\derat}[1]{\frac{d}{dt}\big\vert_{t = #1}}
\newcommand{\bigderat}[1]{\frac{d}{dt}\Big\vert_{t = #1}}

\newcommand{\dd}{{\tt d}}

\newcommand{\N}{{\mathbb N}}

\newcommand{\R}{{\mathbb R}}
\newcommand{\C}{{\mathbb C}}

\newcommand{\bP}{{\mathbb P}}

\renewcommand{\hat}{\widehat}

\renewcommand{\tilde}{\widetilde}
\renewcommand{\L}{\mathop{\bf L{}}\nolimits}


\newcommand{\SL}{\mathop{{\rm SL}}\nolimits}

\newcommand{\OO}{\mathop{\rm O{}}\nolimits}

\newcommand{\U}{\mathop{\rm U{}}\nolimits}


\newcommand{\fsl} {\mathop{{\mathfrak{sl} }}\nolimits}

\newcommand{\ad}{\mathop{{\rm ad}}\nolimits}
\newcommand{\Ad}{\mathop{{\rm Ad}}\nolimits}

\renewcommand{\Re}{\mathop{{\rm Re}}\nolimits}
\renewcommand{\Im}{\mathop{{\rm Im}}\nolimits}

\newcommand{\Hom}{\mathop{{\rm Hom}}\nolimits}

\newcommand{\Aut}{\mathop{{\rm Aut}}\nolimits}

\newcommand{\End}{\mathop{{\rm End}}\nolimits}
\newcommand{\id}{\mathop{{\rm id}}\nolimits}

\newcommand{\im}{\mathop{{\rm im}}\nolimits}

\newcommand{\spec}{\mathop{{\rm spec}}\nolimits}

\newcommand{\supp}{\mathop{{\rm supp}}\nolimits}

\newcommand{\oline}{\overline}

\newcommand{\la}{\langle}
\newcommand{\ra}{\rangle}

\newcommand{\spann}{{\rm span}}

\newcommand{\ssssarr}{\hbox to 15pt{\rightarrowfill}}
\newcommand{\sssarr}{\hbox to 20pt{\rightarrowfill}}
\newcommand{\ssarr}{\hbox to 30pt{\rightarrowfill}}
\newcommand{\sarr}{\hbox to 40pt{\rightarrowfill}}
\newcommand{\arr}{\hbox to 60pt{\rightarrowfill}}
\newcommand{\larr}{\hbox to 60pt{\leftarrowfill}}
\newcommand{\Arr}{\hbox to 80pt{\rightarrowfill}}

\def\theoremname{Theorem}
\def\propositionname{Proposition}
\def\corollaryname{Corollary}
\def\lemmaname{Lemma}
\def\remarkname{Remark}
\def\conjecturename{Conjecture} 

\def\definitionname{Definition}
\def\exercisename{Exercise}
\def\examplename{Example}
\def\examplesname{Examples}
\def\problemname{Problem}
\def\problemsname{Problems}

\def\bemerkname{Bemerkung}
\def\aufgname{Aufgabe}

\def\@thmcounter#1{\noexpand\arabic{#1}}
\def\@thmcountersep{}
\def\@begintheorem#1#2{\it \trivlist \item[\hskip 
\labelsep{\bf #1\ #2.\quad}]}
\def\@opargbegintheorem#1#2#3{\it \trivlist
      \item[\hskip \labelsep{\bf #1\ #2.\quad{\rm #3}}]}
\makeatother
\newtheorem{theor}{\theoremname}[section]
\newtheorem{propo}[theor]{\propositionname}
\newtheorem{coro}[theor]{\corollaryname}
\newtheorem{lemm}[theor]{\lemmaname}

\newenvironment{thm}{\begin{theor}\it}{\end{theor}}
\newenvironment{theorem}{\begin{theor}\it}{\end{theor}}

\newenvironment{prop}{\begin{propo}\it}{\end{propo}}

\newenvironment{cor}{\begin{coro}\it}{\end{coro}}

\newenvironment{lem}{\begin{lemm}\it}{\end{lemm}}
\newenvironment{lemma}{\begin{lemm}\it}{\end{lemm}}

\newtheorem{rema}[theor]{\remarkname}

\newenvironment{rem}{\begin{rema}\rm}{\end{rema}}

\newtheorem{stepnow}[theor]{}

\newtheorem{defin}[theor]{\definitionname} 

\newenvironment{definition}{\begin{defin}\rm}{\end{defin}}

\newtheorem{exerc}{\exercisename}[section]

\newtheorem{exa}[theor]{\examplename}

\newenvironment{example}{\begin{exa}\rm}{\end{exa}}

\newtheorem{exas}[theor]{\examplesname}

\newenvironment{examples}{\begin{exas}\rm}{\end{exas}}

\newtheorem{conj}[theor]{\conjecturename}

\newtheorem{pro}[theor]{\problemname}

\newtheorem{prs}[theor]{\problemsname}

\newcommand{\pmat}[1]{\begin{pmatrix} #1 \end{pmatrix}}

\newcommand{\Rmnum}[1]{\uppercase\expandafter{\romannumeral#1}}

\newcommand{\p}{\partial}

\renewcommand{\phi}{\varphi}

\newcommand{\Stand}{\mathop{{\rm Stand}}\nolimits}

\newcommand{\reg}{\mathrm{reg}}

\makeatletter
\def\blfootnote{\xdef\@thefnmark{}\@footnotetext}
\makeatother

\begin{document}
\title{Analytic extensions of representations of \(*\)-subsemigroups without polar decomposition} 

\author{Daniel Oeh} 

\maketitle

\blfootnote{2010 \textit{Mathematics Subject Classification.} Primary 22D10, 22E30, 22E70.}
\blfootnote{\textit{Key words and phrases.} Unitary representations, Lie groups, Lie subsemigroups, Analytic extensions, Olshanski semigroups.}

\begin{abstract}
  Let \((G,\tau)\) be a finite-dimensional Lie group with an involutive automorphism \(\tau\) of \(G\) and let \(\g = \fh \oplus \fq\) be its corresponding Lie algebra decomposition.
  We show that every non-degenerate strongly continuous representation on a complex Hilbert space \(\cH\) of an open \(*\)-subsemigroup \(S \subset G\), where \(s^* = \tau(s)^{-1}\), has an analytic extension to a strongly continuous unitary representation of the 1-connected Lie group \(G_1^c\) with Lie algebra \([\fq,\fq] \oplus i\fq\).
  
  We further examine the minimal conditions under which an analytic extension to the 1-connected Lie group \(G^c\) with Lie algebra \(\fh \oplus i\fq\) exists. This result generalizes the L\"uscher--Mack Theorem and the extensions of the L\"uscher--Mack Theorem for \(*\)-subsemigroups satisfying \(S = S(G^\tau)_0\) by Merigon, Neeb, and \'Olafsson.

  Finally, we prove that non-degenerate strongly continuous representations of certain \(*\)-subsemigroups \(S\) can even be extended to representations of a generalized version of an Olshanski semigroup.
\end{abstract}

\section{Introduction}
\label{sec:intro}

In the context of unitary representation theory, the problem of analytic extensions can be stated as follows: Let \((G,\tau)\) be a pair consisting of a Lie group \(G\) and an involutive automorphism \(\tau\) on \(G\). By decomposing the Lie algebra \(\g\) of \(G\) into the \((+1)\)-eigenspace \(\fh\) and the \((-1)\)-eigenspace \(\fq\) of \(\L(\tau)\), we obtain a decomposition \(\g = \fh \oplus \fq\). Let \(S\) be an open subsemigroup of \(G\) which is invariant under the operation \(g^* := \tau(g)^{-1}, g \in G,\) and let \(\pi: S \rightarrow B(\cH)\) be a strongly continuous \(*\)-representation of \(S\) on a complex Hilbert space \(\cH\) by bounded operators. Then the goal is to find a strongly continuous representation \(\pi^c : G^c \rightarrow \U(\cH)\) of the 1-connected Lie group \(G^c\) with Lie algebra \(\g^c = \fh \oplus i\fq\) which is uniquely determined by an analytic continuation property.

A well-known example of this problem are strongly continuous self-adjoint one-parameter semigroups \(\pi : (\R_{> 0}, +) \rightarrow B(\cH)\). Here, we have \(G = \R\) and \(\tau = -\id_\R\). In this case, the infinitesimal generator \(A\) of \(\pi\) is selfadjoint and, by functional calculus, the representation \(\pi^c(it) := e^{itA}, t \in \R,\) of \(G^c = i\R\) is an analytic extension of \(\pi\) (cf.\ Example \ref{ex:one-dim}).

More generally, the following theorem is proven in \cite{LM75}, known as the L\"uscher--Mack Theorem: Let \(H \subset G\) be the integral subgroup with Lie algebra \(\fh\) and let \(C \subset \fq\) be a non-empty open convex cone which is invariant under the adjoint action of \(H\).
Consider the \(*\)-semigroup \(\Gamma(C)\) generated by \(H\exp(C)\).
Then every contraction representation \(\pi\) of \(\Gamma(C)\) on a complex Hilbert space can be analytically continued to a strongly continuous unitary representation \(\pi^c\) of \(G^c\) in the sense that the infinitesimal generators of the one-parameter (semi)-groups of elements in \(\fh + C\) coincide up to the obvious multiplication with \(i\).

Since the proof of the L\"uscher--Mack Theorem in \cite{LM75} relies on the existence of coordinates of the second kind (cf.\ \cite[Lem.\ 9.2.6]{HN12}), it only works if \(G\) is finite-dimensional. However, the theorem has been proven in \cite{MN12} in the case where \(G\) is a Banach--Lie group and \(S\) is an Olshanski semigroup. Olshanski semigroups are semigroups of the form \(\Gamma(C)\) as above with the additional property that the polar map
\[H \times C \rightarrow \Gamma(C), \quad (h,x) \mapsto h\exp(x),\]
is a diffeomorphism.

In \cite{MNO15}, an extension of the L\"uscher--Mack Theorem has been proven for Banach--Lie groups \(G\) and open \(*\)-subsemigroups \(S\) with \(SH = S\). Given a non-degenerate strongly continuous \(*\)-representation \((\pi,\cH)\) of \(S\) with additional smoothness properties, there exists a strongly continuous unitary representation \((\pi^H,\cH)\) of \(H\) and a strongly continuous unitary representation \((\pi^c,\cH)\) of \(G^c\) such that, for \(s \in S,\, h \in H,\) and \(x \in \fq\) satisfying \(\exp(tx) \in S\) for \(t > 0\), we have
\begin{equation}
  \label{eq:intro-ancont}
\pi(sh) = \pi(s)\pi^H(h) \quad \text{and} \quad \pi(\exp(x)) = e^{-i\p\pi^c(ix)}.
\end{equation}

The solution of analytic continuation problems plays an important role in reflection positivity:
In constructive Quantum Field Theory, one uses reflection positivity to construct relativistic field theories from euclidean ones (cf.\ \cite{GJ81,OS73,OS75}).
In the representation theory of Lie groups, one would thus like to pass from a unitary representation of a Lie group \(G\) to a unitary representation of \(G^c\).
The primary example of this passage is from the euclidean motion group \(G = \R^d \rtimes \OO_d(\R)\) to the Poincar\'e group \(G^c = \R^d \rtimes \OO_{1,d-1}(\R)\).
One way to approach this problem involves constructing from the representation of \(G\) a contraction representation of an involutive subsemigroup \(S \subset G\) as shown in \cite[3.4]{NO17}.
If \(S\) has interior points and the assumptions of the L\"uscher--Mack Theorem are satisfied, then we can extend this semigroup representation to a unitary representation of \(G^c\) by analytic continuation.

A priori, it is not always clear whether an implicitly specified subsemigroup \(S\) is an Olshanski semigroup or even satisfies \(S = SH\):
For example, in the context of standard subspaces, i.e.\ real closed subspaces \(V \subset \cH\) such that \(V \cap iV = 0\) and \(\oline{V + iV} = \cH\), the inclusion order on the set of standard subspaces is of particular interest because it relates naturally to inclusions of von Neumann algebras (cf.\ \cite{NO17}). For a unitary representation \((\pi,\cH)\) of \(G\), the semigroup
\[S_V := \{g \in G : U_g V \subset V\} \subset G\]
encodes the order structure on the orbit \(\pi(G).V\) (cf.\ \cite{Ne17}). Furthermore, one can construct from \((\pi,\cH)\) a strongly continuous contraction representation of \(S_V\) on \(\cH\), and its analytic continuation to \(G^c\), if it exists, provides more information about the semigroup \(S_V\).
We will elaborate on this example in Section \ref{sec:examples}.

This article will solve the analytic extension problem for open \(*\)-subsemigroups of finite dimensional Lie groups by refining some of the methods used in \cite{MNO15}.
These extensions are uniquely determined by properties similar to \eqref{eq:intro-ancont}.

We proceed as follows.
In Section \ref{sec:iker}, we recall the concept of positive definite distribution kernels which are invariant under the action of a Lie algebra based on the arguments in \cite{MNO15}.
We then apply Simon's Exponentiation Theorem \cite{Si72} to proof the existence of a unitary representation of the 1-connected Lie group \(G_1^c\) with Lie algebra \(\g_1^c = [\fq,\fq] \oplus i\fq\) (cf.\ Theorem \ref{thm:iker-si-rep}).
One of the main ingredients of our arguments is Fr\"ohlich's Theorem \cite{Fro80}, which gives a criterion for the essential selfadjointness of unbounded operators on Hilbert spaces.
In Section \ref{sec:ancont-rep}, we explain how the methods developed in the previous section can be used in the context of \(*\)-representations of subsemigroups by applying a well-known GNS construction.
We obtain a unitary representation of \(G_1^c\) which we call the \emph{analytic continuation} to \(G_1^c\) (cf.\ Theorem \ref{thm:si-integration}).
We then justify this naming by showing that this representation is an analytic continuation of the original semigroup representation (cf.\ Theorem \ref{thm:semigrp-rep-int}).
In order to extend the analytic continuation to a unitary representation of \(G^c\), we impose that there exists a \(\1\)-neighborhood \(B \subset H := (G^\tau)_0\) such that, for the subsemigroup
\[S_B := \{s \in S : Bs \subset S\},\]
the restriction of the semigroup representation to \(S_B\) is non-degenerate.
Under this condition, we prove the existence of an analytic continuation of the original semigroup representation to the Lie group \(G^c\) (cf.\ Theorem \ref{thm:localrep-analytic-cont}).
In Section \ref{sec:ext-semgrp-rep}, we consider semigroups \(S\) with the property that there exists an open set \(U \subset \fq\) such that, for all \(x \in U\) and \(t > 0\), we have \(\exp(tx) \in S\).
While this property is satisfied for Olshanski semigroups, not all open \(*\)-subsemigroups are of this kind (cf.\ Example \ref{ex:cayley-type}).
We then prove that, for every strongly continuous non-degenerate \(*\)-representation \((\pi,\cH)\) of \(S\), there exists a representation \((\tilde\pi,\cH)\) of an open \(*\)-subsemigroup \(\tilde S\) of the universal covering of \(G\) such that \(\tilde\pi\) is an extension of \(\pi\lvert_{\exp(U)}\) up to coverings (cf.\ Theorem \ref{thm:Ol-semgrp-ext}).
The semigroup \(\tilde S\) is a generalized version of an Olshanski semigroup and we show that the analytic continuations of the representations of \(S\) and \(\tilde S\) to \(G^c\) coincide (cf.\ Corollary \ref{cor:Ol-semgrp-ancont}).
Finally, in Section \ref{sec:examples}, we consider reflection positive representations and symmetric Lie groups with 3-graded Lie algebras as examples for which our results on analytic continuations can be applied. 

\subsection*{Notation and conventions}

For a complex Hilbert space \(\cH\), its scalar product \(\la \cdot, \cdot \ra\) is linear in the second argument. The algebra of bounded operators on \(\cH\) will be denoted by \(B(\cH)\) and the group of unitary operators by \(\U(\cH)\).

For a symmetric Lie group \((G,\tau)\), we set \(g^* := \tau(g)^{-1}\) for \(g \in G\). The corresponding decomposition of \(\g = \L(G)\) into \(\tau\)-eigenspaces is denoted by \(\g = \h \oplus \fq\), where \(\fh\) is the \((+1)\)-eigenspace and \(\fq\) is the \((-1)\)-eigenspace. Moreover, we define \(\g_1 := [\fq,\fq] \oplus \fq\) as the ideal in \(\g\) generated by \(\fq\) and \(\g^c := \h \oplus i\fq\) as the \emph{dual Lie algebra of \(\g\)}.

Let \((\pi,\cH)\) be a strongly continuous unitary representation of \(G\) on a complex Hilbert space \(\cH\) and let \(x \in \g\). Following \cite{Schm90}, we denote the infinitesimal generator of the unitary one-parameter group \(t \mapsto \pi(\exp(tx))\) by \(\partial\pi(x)\) (cf. Appendix \ref{sec:diffvectors}).

Given a smooth manifold \(M\), we denote by \(C_c^\infty(M)\) the space of complex-valued smooth functions on \(M\) and by \(C^{-\infty}(M)\) the space of distributions, i.e.\ antilinear continuous functionals on \(C_c^\infty(M)\).

\section{Invariant positive definite kernels}
\label{sec:iker}

In this section, we recall some of the fundamental properties of positive definite kernels from \cite{MNO15} and explain how invariant positive definite kernels can be used to construct unitary representations.

\begin{definition}
  Let \(X\) be a set. A function \(K: X \times X \rightarrow \C\) is called a \emph{positive definite kernel} if each finite subset \(\{(x_1,\lambda_1),\ldots,(x_n,\lambda_n)\} \subset X \times \C\) satisfies
  \begin{equation}
    \label{eq:posdef-condition}
    \sum_{j,k=1}^n \lambda_j\oline{\lambda_k}K(x_j,x_k) \geq 0.
  \end{equation}
\end{definition}

Every positive definite kernel \(K : X \times X \rightarrow \C\) uniquely determines a Hilbert space \(\cH_K \subset \C^X\) of complex-valued functions on \(X\) for which the point evaluations
\[K_x : \cH_K \rightarrow \C, \quad f \mapsto f(x),\]
are continuous linear functionals. By identifying \(K_x\) with the function in \(\cH_K\) for which \(\la K_x, f\ra = K_x(f)\) for all \(f \in \cH_K\), we obtain
\[K(x,y) = \la K_x, K_y\ra = K_y(x), \quad \text{for } x,y \in X.\]
Furthermore, the space \(\cH_K^0 := \spann \{K_x : x \in X\}\) is dense in \(\cH_K\) (cf.\ \cite[Thm.\ I.1.4]{Ne00}).
The space \(\cH_K\) is called the \emph{reproducing kernel Hilbert space of \(K\)}.

If \(X\) is a topological space and \(K\) is separately continuous and locally bounded, then \(\cH_K \subset C(X)\) (cf.\ \cite[Prop.\ I.1.9]{Ne00}). Similarly, one can show that, if \(X\) is a locally convex smooth manifold and \(K \in C^\infty(X \times X)\), then \(\cH_K \subset C^\infty(X)\).

Consider now a finite dimensional smooth manifold \(M\) and a distribution \(K \in C^{-\infty}(M \times M)\) such that \((\psi_1,\psi_2) \mapsto K(\psi_1 \otimes \oline{\psi_2})\) is a positive semidefinite hermitian form on \(C_c^\infty(M)\) and denote the corresponding Hilbert space completion by \(\cH_K\). The adjoint of the inclusion \(\iota: C_c^\infty(M) \rightarrow \cH_K\) is a continuous injective linear map
\begin{equation}
  \label{eq:reprod-dist-incl}
  \iota' : \cH_K \rightarrow C^{-\infty}(M), \quad \iota'(v)(\varphi) = \la K_\varphi, v\ra,
\end{equation}
(cf.\ \cite[p.\ 47]{MNO15}), so that we can from now on identify \(\cH_K\) with a subspace of \(C^{-\infty}(M)\). The map \(K\) is called a \emph{positive definite distribution}.

\begin{definition}
  Let \(M\) be a finite dimensional smooth manifold. We denote the set of vector fields on \(M\) by \(\cV(M)\).

  (a) Let \(X \in \cV(M)\) and let \(\Phi: \cD \rightarrow M\) be its maximal local flow, where \(\cD \subset \R \times M\) is an open set containing \(\{0\} \times M\). For a smooth function \(f \in C^\infty(M)\) on \(M\), its \emph{Lie derivative} is defined by
  \[\cL_X f = \lim_{t \rightarrow 0} \frac{1}{t} (f \circ \Phi_t - f) \in C^\infty(M).\]
  (b) Let \(M\) be a finite dimensional smooth manifold, \(D \in C^{-\infty}(M \times M)\) be a distribution, and \(\g = \fh \oplus \fq\) be a symmetric Lie algebra with involution \(\tau\). For a vector field \(X \in \cV(M)\), we define:
  \[(\cL_X^1 D)(\psi_1 \otimes \psi_2) := -D(\cL_X \psi_1 \otimes \psi_2) \quad \text{and} \quad (\cL_X^2 D)(\psi_1 \otimes \psi_2) := -D(\psi_1 \otimes \cL_X \psi_2), \quad \psi_1,\psi_2 \in C_c^\infty(M).\]
  Let \(\sigma : \g \rightarrow \cV(M)\) be a homomorphism of Lie algebras.
  Then \(D\) is called \emph{\(\sigma\)-compatible} if \(\cL_{\sigma(x)}^1 D = -\cL_{\sigma(\tau(x))}^2 D\) for all \(x \in \g\).
\end{definition}

For the following proposition, we recall that a vector field \(X \in \cV(M)\) on a smooth manifold \(M\) acts on the space of distributions of \(M\) by
\[(\cL_X D)(\varphi) := -D(\cL_X \varphi), \quad D \in C^{-\infty}(M),\varphi \in C^{\infty}(M).\]

\begin{prop}
  \label{prop:iker-geom-fro}
  Let \(M\) be a finite dimensional smooth manifold and \(K \in C^{-\infty}(M \times M)\) be a positive definite distribution. Let \(\g = \fh \oplus \fq\) be a symmetric Lie algebra with involution \(\tau\) and \(\sigma : \g \rightarrow \cV(M)\) be a homomorphism of Lie algebras such that \(K\) is \(\sigma\)-compatible. For \(x \in \g\), let
  \[\cD_{x} := \{D \in \cH_K : \cL_{\sigma(x)}D \in \cH_K\}.\]
  and define
  \[\cL^K_{x} : \cD_{x} \rightarrow \cH_K, \quad D \mapsto \cL_x^K D := \cL_{\sigma(x)} D.\]
  Then the following assertions hold:
  \begin{enumerate}
    \item For all \(x \in \g\), we have \(\cH_K^0 := \spann \{K_\varphi : \varphi \in C_c^{\infty}(M)\} \subset \cD_{x}\) and \(\cL^K_{x}K_{\varphi} = K_{\cL_{\sigma(\tau(x))}\varphi} \in \cH_K^0\) for \(\varphi \in C_c^\infty(M)\). Moreover, \(\cL_x^K\) is closed.
    \item For \(x \in \fh\), the operator \(\cL^K_{x}\lvert_{\cH_K^0}\) is skew-symmetric with \((\cL_x^K\lvert_{\cH_K^0})^*= -\cL_x^K\).
    \item For \(y \in \fq\), let \(\Phi^{\sigma(y)}\) be the maximal local flow of \(\sigma(y)\). Then \(\cL^K_{y}\) is selfadjoint and \(\cH_K^0\) is a core of \(\cL^K_{y}\). Moreover, if \(\varphi \in C_c^\infty(M)\) and \(\Phi^{\sigma(y)}\) is defined on \([-\varepsilon,\varepsilon] \times \supp(\varphi)\) for \(\varepsilon > 0\), then
      \begin{equation}
        \label{eq:iker-geom-fro}
        e^{t\cL^K_{y}}K_\varphi = K_{\varphi \circ \Phi_{-t}^{\sigma(y)}}, \quad |t| \leq \varepsilon.
      \end{equation}
      The curve \(t \mapsto e^{t\cL^K_y}K_\varphi\) has an analytic extension to the strip \(\cS_\varepsilon = \{z \in \C : |\Re z| < \varepsilon\}\). In particular, the space \(\cH_K^0\) consists of analytic vectors of \(i\cL_y^K\).
  \end{enumerate}
\end{prop}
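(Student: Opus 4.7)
The plan is to reduce all three claims to the single identity $\cL_{\sigma(x)}K_\varphi = K_{\cL_{\sigma(\tau(x))}\varphi}$, which encodes the $\sigma$-compatibility of $K$ at the level of reproducing vectors, and then feed the resulting symmetry information into standard operator-theoretic tools: a soft closedness argument for (a), a direct adjoint calculation for (b), and Fr\"ohlich's essential-selfadjointness criterion for (c). The main obstacle lies in (c), where one must produce an analytic-vector bound of the form $\|(\cL^K_y)^n K_\varphi\|\leq C^{n+1}n!$ from the purely distributional hypothesis on $K$; this hinges on the ability to trap all flow translates $\varphi\circ\Phi^{\sigma(y)}_{-t}$ for $|t|<\varepsilon$ inside a single compact set.

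For (a), I would identify $K_\varphi$ with its image $\iota'(\iota(\varphi))\in C^{-\infty}(M)$, so that $K_\varphi(\psi)=K(\varphi\otimes\oline{\psi})$, and compute
\[(\cL_{\sigma(x)}K_\varphi)(\psi) = -K_\varphi(\cL_{\sigma(x)}\psi) = -K(\varphi\otimes\cL_{\sigma(x)}\oline{\psi}).\]
The $\sigma$-compatibility (applied after rewriting $\cL^2_{\sigma(x)}K=-\cL^1_{\sigma(\tau(x))}K$ using $\tau^2=\id$) turns the last expression into $K(\cL_{\sigma(\tau(x))}\varphi\otimes\oline{\psi})=K_{\cL_{\sigma(\tau(x))}\varphi}(\psi)$, so that $\cL_{\sigma(x)}K_\varphi$ coincides with the element of $\cH_K^0$ coming from $\cL_{\sigma(\tau(x))}\varphi\in C_c^\infty(M)$. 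Closedness of $\cL^K_x$ then drops out of the fact that $\iota'\colon\cH_K\to C^{-\infty}(M)$ is continuous and $\cL_{\sigma(x)}$ is continuous on distributions, so any convergence in $\cH_K$ passes to $C^{-\infty}(M)$, is preserved by $\cL_{\sigma(x)}$, and returns to $\cH_K$.

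For (b), with $x\in\fh$ and $\tau(x)=x$, applying (a) on both slots of the inner product and invoking $\sigma$-compatibility once more gives
\[\la\cL^K_x K_\varphi,K_\psi\ra = -\la K_\varphi,\cL^K_x K_\psi\ra\]
for all $\varphi,\psi\in C_c^\infty(M)$, hence the skew-symmetry of $\cL^K_x\lvert_{\cH_K^0}$. The sharper identity $(\cL^K_x\lvert_{\cH_K^0})^*=-\cL^K_x$ is then two inclusions: for $D\in\cD_x$, the computation $\la\cL^K_x K_\varphi,D\ra = D(\cL_{\sigma(x)}\varphi) = -\la K_\varphi,\cL^K_x D\ra$ places $D$ in the adjoint's domain with adjoint value $-\cL^K_x D$; conversely, if $D^*$ is the adjoint value, the same identity forces $\cL_{\sigma(x)}D=-D^*$ as distributions, hence $D\in\cD_x$.

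For (c), the (b)-style computation with $\tau(y)=-y$ upgrades skew-symmetry to symmetry of $\cL^K_y\lvert_{\cH_K^0}$. To promote this to selfadjointness, I would verify that every $K_\varphi\in\cH_K^0$ is an analytic vector: choose $\varepsilon>0$ with $\Phi^{\sigma(y)}$ defined on $[-\varepsilon,\varepsilon]\times\supp(\varphi)$, fix a compact $L\subset M$ containing all translates $\Phi^{\sigma(y)}_{-t}(\supp\varphi)$ for $|t|\leq\varepsilon$, and exploit the real-analyticity of $t\mapsto\varphi\circ\Phi^{\sigma(y)}_{-t}$ as a $C_c^\infty(M)$-valued map supported in $L$. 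Iterating (a) gives $(\cL^K_y)^n K_\varphi=(-1)^n K_{\cL_{\sigma(y)}^n\varphi}$, and the continuity of $\iota$ on smooth functions supported in $L$ converts the Taylor bound in $C_c^\infty$ into $\sum_n\frac{|t|^n}{n!}\|(\cL^K_y)^n K_\varphi\|<\infty$ for $|t|<\varepsilon$. Fr\"ohlich's theorem then yields essential selfadjointness of $\cL^K_y\lvert_{\cH_K^0}$; combined with closedness from (a) and the adjoint identity $(\cL^K_y\lvert_{\cH_K^0})^*=\cL^K_y$ (argued as in (b), with signs now matching), the closure is forced to equal $\cL^K_y$, so $\cL^K_y$ is selfadjoint and $\cH_K^0$ is a core. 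The flow formula \eqref{eq:iker-geom-fro} then follows by verifying that $u(t):=K_{\varphi\circ\Phi^{\sigma(y)}_{-t}}$ is a $C^1$ solution of $u'(t)=\cL^K_y u(t)$ with $u(0)=K_\varphi$ and invoking uniqueness via the spectral theorem to identify $u(t)=e^{t\cL^K_y}K_\varphi$. Finally, the analytic-vector bound places $K_\varphi$ in the domain of $e^{z\cL^K_y}$ for $|z|<\varepsilon$, and composing with the unitary group $e^{is\cL^K_y}$ for $s\in\R$ propagates this analytic extension to the full strip $\cS_\varepsilon$.
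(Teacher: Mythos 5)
Parts (a) and (b) of your argument coincide with the paper's proof: the identity $\cL_{\sigma(x)}K_\varphi=K_{\cL_{\sigma(\tau(x))}\varphi}$, the closedness via continuity of $\iota'$ and of $\cL_{\sigma(x)}$ on $C^{-\infty}(M)$, and the two-inclusion computation of $(\cL_x^K\lvert_{\cH_K^0})^*$ are all exactly as in the paper.

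Part (c), however, contains a genuine gap. You propose to prove essential selfadjointness by showing that each $K_\varphi$ is an analytic vector, via the ``real-analyticity of $t\mapsto\varphi\circ\Phi^{\sigma(y)}_{-t}$ as a $C_c^\infty(M)$-valued map'' and the resulting Taylor bound $\sum_n\frac{|t|^n}{n!}\|(\cL_y^K)^nK_\varphi\|<\infty$. This curve is only \emph{smooth}, not analytic: $\sigma(y)$ is merely a smooth vector field and $\varphi$ a generic element of $C_c^\infty(M)$ (already for $M=\R$, $\sigma(y)=\partial_x$ one gets $\varphi(\,\cdot\,-t)$, which is analytic in $t$ only if $\varphi$ is analytic, contradicting compact support). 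Correspondingly, $\|\cL_{\sigma(y)}^n\varphi\|_{C^m(L)}$ involves derivatives of $\varphi$ up to order $n+m$, which for general $\varphi\in C_c^\infty$ grow faster than any $C^{n+1}n!$, so the estimate you need cannot be extracted from continuity of $K$ on $C_c^\infty(L)\otimes C_c^\infty(L)$. The analytic-vector route (Nelson's criterion) is therefore not available here; note also that Fr\"ohlich's theorem, which you invoke, does not take analytic vectors as input.

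The point you are missing --- and the reason the paper cites the Geometric Fr\"ohlich Theorem \cite[Thm.\ 7.5]{MNO15}, which rests on \cite{Fro80} --- is that the required analyticity comes from \emph{positive definiteness}, not from regularity of $\varphi$ or of the flow. You in fact already have the key ingredient at the end of your argument: $u(t):=K_{\varphi\circ\Phi^{\sigma(y)}_{-t}}$ is a $C^1$ curve in $\cH_K^0$ solving $u'(t)=\cL_y^K u(t)$, $u(0)=K_\varphi$, on $[-\varepsilon,\varepsilon]$. This exhibits a densely defined local \emph{symmetric semigroup} through every vector of $\cH_K^0$, and Fr\"ohlich's theorem on unbounded symmetric semigroups applies directly to such data: the function $t\mapsto\la K_\varphi,u(t)\ra$ is positive definite on an interval, hence by Widder's theorem a Laplace transform of a positive measure and automatically analytic, which is what drives the essential selfadjointness and the identification $u(t)=e^{t\cL_y^K}K_\varphi$. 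So the correct order of reasoning is: ODE for $u(t)$ $\Rightarrow$ Fr\"ohlich $\Rightarrow$ $\cL_y^K\lvert_{\cH_K^0}$ essentially selfadjoint with closure $\cL_y^K$ and \eqref{eq:iker-geom-fro}; and only \emph{then} does membership $K_\varphi\in\cD(e^{\pm\varepsilon\cL_y^K})$ together with the spectral theorem give the analytic extension to the strip $\cS_\varepsilon$ (continuity on $\cS_\varepsilon$, analyticity off $i\R$, hence analyticity everywhere), which is how the paper concludes. Your final two sentences are fine once rearranged into this order; the analytic-vector verification should be deleted, as it is both false as stated and unnecessary.
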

\begin{proof}
  (a) Let \(x \in \g\) and \(\psi_1,\psi_2 \in C_c^\infty(M)\). The \(\sigma\)-compatibility of \(K\) implies that
  \[(\cL_{\sigma(x)}K_{\psi_2})(\psi_1) = -K_{\psi_2}(\cL_{\sigma(x)}\psi_1) = -K(\cL_{\sigma(x)}\psi_1 \otimes \oline{\psi_2}) = K(\psi_1 \otimes \oline{\cL_{\sigma(\tau(x))}\psi_2}) = K_{\cL_{\sigma(\tau(x))}\psi_2}(\psi_1).\]
  To see that \(\cL_x^K\) is closed, it suffices to note that the Lie derivative \(\cL_{\sigma(x)} : C_c^\infty(M) \rightarrow C_c^\infty(M)\) is a continuous linear map on the locally convex space \(C_c^\infty(M)\). Therefore, its adjoint map on \(C^{-\infty}(M)\) is continuous with respect to the weak-*-topology on \(C^{-\infty}(M)\). Since the inclusion map \eqref{eq:reprod-dist-incl} is continuous, the closedness follows from the definition of \(\cD_x\)\footnote{More generally, consider a topological vector space \(E\) and a Hilbert space \(\cH\) such that \(\cH\) is a subspace of \(E\) and the inclusion \(\cH \hookrightarrow E\) is continuous. Then, for any continuous linear map \(T : E \rightarrow E\), the operator \(T_\cH : \cD \rightarrow \cH\) defined on \(\cD := T^{-1}(\cH) \cap \cH\) by \(T_\cH(v) := T(v)\) is a closed operator. }.
  
  (b) Let \(x \in \fh\). The computations in the proof of (a) show that \(\cL_x^K\lvert_{\cH_K^0}\) is skew-symmetric. We now prove that \(\cL_x^K = -(\cL_x^K\lvert_{\cH_K^0})^*\): Let \(D \in \cD_x\). Then we have for all \(\varphi \in C_c^\infty(M)\):
  \[- \la K_\varphi, \cL_x^K D\ra = -(\cL_x^K D)(\varphi) = D(\cL_{\sigma(x)}\varphi) = \la K_{\cL_{\sigma(x)}\varphi}, D\ra = \la \cL_x^K K_\varphi, D\ra.\]
  Thus, \(D \in \cD((\cL_x^K\lvert_{\cH_K^0})^*)\) and \((\cL_x^K\lvert_{\cH_K^0})^*D = -\cL_x^K D\). Conversely, let \(D \in \cD((\cL_x^K\lvert_{\cH_K^0})^*)\) with \(E := (\cL_x^K\lvert_{\cH_K^0})^*D\). Then
  \[E(\varphi) = \la K_\varphi, E\ra = \la \cL_x^K K_\varphi, D\ra = \la K_{\cL_{\sigma(x)}\varphi}, D\ra = D(\cL_{\sigma(x)}\varphi) = -(\cL_{\sigma(x)}D)(\varphi)\]
  for all \(\varphi \in C_c^\infty(M)\) shows that \(\cL_{\sigma(x)}D = -E \in \cH_K\) and thus \(D \in \cD_x\) with \(\cL_x^KD = -E\). This proves the claim.

  In order to show (c), we note that, by the Geometric Fr\"ohlich Theorem for distributions \cite[Thm.\ 7.5]{MNO15}, the operator \(\cL_y^K\lvert_{\cH_K^0}\) is essentially selfadjoint, its closure equals \(\cL_y^K\), and \eqref{eq:iker-geom-fro} holds. By the spectral theorem, the curve
  \[\cS_\varepsilon \rightarrow \cH_K, \quad u + iv \mapsto e^{u\cL_y^K}e^{iv\cL_y^K}K_\varphi,\]
  is continuous on \(\cS_\varepsilon\) and analytic on \(\cS_\varepsilon \setminus i\R\). Thus, it is analytic on \(\cS_\varepsilon\). This proves the second part of (c).
\end{proof}

\begin{thm}
  \label{thm:iker-si-rep}
  In the context of {\rm Proposition \ref{prop:iker-geom-fro}}, set for \(x \in [\fq,\fq]\) and \(iy \in i\fq \subset \g_\C\)
  \[T(x) := \cL^K_{x}\lvert_{\cH_K^0}, \quad T(iy) := i\cL^K_{y}\lvert_{\cH_K^0}.\]
  Then \(T\) defines a representation of \(\g_1^c = [\fq,\fq] \oplus i\fq\) by unbounded skew-symmetric operators. Furthermore, there exists a unique strongly continuous unitary representation \((\pi_1^c,\cH_K)\) of the 1-connected Lie group \(G_1^c\) with Lie algebra \(\g_1^c\) such that 
  \[\partial\pi_1^c(x)\lvert_{\cH_K^0} = \cL^K_x\lvert_{\cH_K^0} \quad \text{and} \quad \partial\pi_1^c(iy) = i\cL^K_y \quad \text{for all } x \in [\fq,\fq], y \in \fq.\]
\end{thm}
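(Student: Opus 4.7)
The plan is to recognise $T$ as a Lie algebra representation of $\g_1^c$ by skew-symmetric operators on the common invariant dense domain $\cH_K^0$, and then to invoke Simon's Exponentiation Theorem \cite{Si72} to integrate it to a unitary representation of $G_1^c$.

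For the first part of the statement, the invariance of $\cH_K^0$ under every $\cL_x^K$, $x \in \g$, is immediate from Proposition \ref{prop:iker-geom-fro}(a), since $\cL_x^K K_\varphi = K_{\cL_{\sigma(\tau(x))}\varphi} \in \cH_K^0$. Combined with the facts that $\sigma\colon \g \to \cV(M)$ is a Lie algebra homomorphism and $[\cL_{\sigma(x)},\cL_{\sigma(y)}] = \cL_{\sigma([x,y])}$ on $C_c^\infty(M)$, this makes $x \mapsto \cL_x^K|_{\cH_K^0}$ into a representation of $\g$ on $\cH_K^0$. Complex-linear extension to $\g_\C$ and restriction to the real form $\g_1^c = [\fq,\fq] \oplus i\fq$ yield a representation of $\g_1^c$: for instance, the cross bracket $[iy_1,iy_2]_{\g_1^c} = -[y_1,y_2] \in [\fq,\fq]$ matches $[T(iy_1),T(iy_2)] = -\cL_{[y_1,y_2]}^K|_{\cH_K^0}$, and the remaining bracket relations are verified analogously. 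Skew-symmetry of $T(x)$ on $\cH_K^0$ then follows from Proposition \ref{prop:iker-geom-fro}: for $x \in [\fq,\fq] \subset \fh$ this is part (b), while for $iy \in i\fq$ the selfadjointness of $\cL_y^K$ in (c) makes $T(iy) = i\cL_y^K|_{\cH_K^0}$ skew-symmetric.

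The main obstacle is the integration step: to apply Simon's Exponentiation Theorem one needs a dense, invariant set of analytic vectors for a generating subset of $\g_1^c$. Proposition \ref{prop:iker-geom-fro}(c) supplies exactly this, because $\cH_K^0$ consists of analytic vectors for each $T(iy)$ with $y \in \fq$, and the real subspace $i\fq$ already generates $\g_1^c$ as a Lie algebra via $[iy_1,iy_2] = -[y_1,y_2]$, whose span is $[\fq,\fq]$. Simon's theorem therefore produces a strongly continuous unitary representation $(\pi_1^c,\cH_K)$ of the 1-connected Lie group $G_1^c$ whose derived representation agrees with $T$ on $\cH_K^0$; in particular, $\partial\pi_1^c(iy) = i\cL_y^K$ because $\cH_K^0$ is a core for both selfadjoint operators, and $\partial\pi_1^c(x)|_{\cH_K^0} = \cL_x^K|_{\cH_K^0}$ for $x \in [\fq,\fq]$. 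Uniqueness of $\pi_1^c$ follows from the fact that the unitary one-parameter groups $t \mapsto \pi_1^c(\exp(tiy)) = e^{it\cL_y^K}$ are completely determined by the selfadjoint closures $\cL_y^K$, and $\{\exp(iy) : y \in \fq\}$ generates $G_1^c$ as a Lie group.
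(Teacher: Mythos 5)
Your proposal is correct and follows essentially the same route as the paper: verify that $\cH_K^0$ is a dense invariant domain of analytic vectors for the operators $T(iy)$, $y\in\fq$, note that $i\fq$ generates $\g_1^c$, and apply Simon's Exponentiation Theorem, then identify $\partial\pi_1^c(iy)$ with $i\cL_y^K$ via essential skew-adjointness on $\cH_K^0$ (the paper makes this last step explicit by citing Nelson's Theorem, which is the justification implicit in your "core" claim). Your extra verification that $T$ is a Lie algebra representation by skew-symmetric operators is a welcome elaboration of what the paper leaves to the reader.
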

\begin{proof}
  By Proposition \ref{prop:iker-geom-fro}, the space \(\cH_K^0\) is invariant under \(T\) and consists of analytic vectors of \(T(iy)\) for all \(y \in \fq\). Since \(i\fq\) generates \(\g_1^c\), Simon's Exponentiation Theorem \cite[Cor.\ 2]{Si72} implies the existence of a strongly continuous unitary representation \((\pi_1^c, \cH)\) of \(G_1^c\) which is uniquely determined by \(\p\pi_1^c(x)\lvert_{\cH_K^0} = T(x)\) for all \(x \in \g_1^c\). For \(y \in \fq\), the operator \(\p\pi_1^c(iy)\lvert_{\cH_K^0}\)  is essentially skew-adjoint by Nelson's Theorem (cf.\ \cite[Thm.\ X.39]{ReSi75}), and its closure coincides with \(\p\pi_1^c(iy)\). Combining this with Proposition \ref{prop:iker-geom-fro}(c), we obtain
    \[i\cL_{y}^K = \oline{i\cL_{y}^K\lvert_{\cH_K^0}} = \oline{\p\pi_1^c(iy)\lvert_{\cH_K^0}} = \p\pi_1^c(iy).\qedhere\]
\end{proof}

\begin{rem}
  \label{rem:si-integration}
  Let \(K\) be as in Proposition \ref{prop:iker-geom-fro} and let \((\pi^c_1,\cH_K)\) be the representation of \(G_1^c\) we obtain from Theorem \ref{thm:iker-si-rep}.
  Then, for \(y \in \fq\), we have
  \[-\p \pi_1^c(iy) = (\p \pi_1^c(iy)\lvert_{\cH_K^0})^*= (i\cL^K_{y}\lvert_{\cH_K^0})^* = -i\oline{\cL^K_{y}\lvert_{\cH_K^0}} = -i\cL^K_y.\]
  For \(x \in [\fq,\fq] \subset \fh\), the operator \(\p \pi_1^c(x)\) is a skew-adjoint extension of \(\cL^K_x\lvert_{\cH_K^0}\). Hence, we have by Proposition \ref{prop:iker-geom-fro}(b)
  \[\cL^K_x\lvert_{\cH_K^0} \subset \p \pi_1^c(x) \subset (-\cL^K_x\lvert_{\cH_K^0})^* = \cL_x^K.\]
\end{rem}

\section{Analytic continuation of \(*\)-semigroup representations}
\label{sec:ancont-rep}

We now apply the results from Section \ref{sec:iker} to representations of \(*\)-subsemigroups in order to construct unitary representations of Lie groups. Throughout this section, \((G,\tau)\) denotes a symmetric Lie group with Lie algebra \(\g = \fh \oplus \fq\), \(S \subset G\) is an open \(*\)-subsemigroup, and \(\cH\) is a complex Hilbert space. Furthermore, we fix a right-invariant Haar measure on \(G\).

\subsection{Semigroup representations and invariant distribution kernels}

A function \(\varphi: S \rightarrow \C\) is called \emph{positive definite} if \(K^\varphi(s,t) := \varphi(st^*)\) is a positive definite kernel.

\begin{prop}
  \label{prop:pd-fun-dist}
  Let \(\varphi: S \rightarrow \C\) be a continuous positive definite function and define
  \begin{equation}
    \label{eq:pd-fun-dist}
    K(\psi_1 \otimes \oline{\psi_2}) := \la \psi_1, \psi_2 \ra_\varphi := \int_S\int_S \oline{\psi_1(g)}\psi_2(h)\varphi(gh^*)\,dgdh, \quad \psi_1,\psi_2 \in C_c^\infty(S).
  \end{equation}
  Let \(\sigma : \g \rightarrow \cV(S),\, \sigma(x)(s) := \derat{0}s\exp(tx),\) be the usual homomorphism from \(\g\) onto the Lie algebra of left-invariant vector fields on \(G\) restricted to \(S\). Then \(K\) is a positive definite \(\sigma\)-compatible distribution.
\end{prop}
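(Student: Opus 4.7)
The plan is in three parts: extend $K$ to a bona fide antilinear distribution on $S \times S$, verify positive definiteness by Riemann approximation, and verify $\sigma$-compatibility by transferring Lie derivatives between the two slots via changes of variable in the Haar integral.

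For the first step, I would define $K$ on a general test function $f \in C_c^\infty(S \times S)$ by $K(f) := \iint \overline{f(g,h)}\, \varphi(gh^*)\,dg\,dh$; this specializes to $\la \psi_1, \psi_2\ra_\varphi$ on $f = \psi_1 \otimes \overline{\psi_2}$, is antilinear in $f$, and is continuous because $\varphi$ is continuous (any test function is supported in a compact subset of $S \times S$, on which the integrand is dominated by $\|f\|_\infty \cdot \sup |\varphi(gh^*)|$). For positive definiteness I would approximate $\la \psi,\psi\ra_\varphi$ by Riemann sums over a compact neighbourhood of $\supp\psi$; these take the form $\sum_{j,k} \lambda_j \overline{\lambda_k}\, \varphi(s_j s_k^*)$ with $\lambda_j := \overline{\psi(s_j)}\, v_j$, and are non-negative by the assumed positive definiteness of $\varphi$. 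Uniform continuity of the integrand on the compact support allows passage to the limit.

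For $\sigma$-compatibility, the crucial identity is
\[g\exp(tx)h^* \;=\; g\bigl(h\exp(t\tau(x))\bigr)^*, \qquad x\in\g,\ g,h\in G,\]
which follows from $\exp(tx)^* = \tau(\exp(tx))^{-1} = \exp(-t\tau(x))$. Using that $\cL_{\sigma(x)} \overline{\psi_1} = \overline{\cL_{\sigma(x)} \psi_1}$ for the real vector field $\sigma(x)$, I rewrite
\[\cL^1_{\sigma(x)}K(\psi_1\otimes\overline{\psi_2}) \;=\; -\bigderat{0}\iint \overline{\psi_1(g\exp(tx))}\,\psi_2(h)\,\varphi(gh^*)\,dg\,dh.\]
Since the flow of $\sigma(x)$ is the right translation $R_{\exp(tx)}$, which preserves the right-invariant Haar measure, the substitution $g \mapsto g\exp(-tx)$ strips $\exp(tx)$ off $\psi_1$ and inserts $\exp(-tx)$ into the argument of $\varphi$; the displayed identity then rewrites $\varphi(g\exp(-tx)h^*)$ as $\varphi(g(h\exp(t\tau(x)))^*)$, and a second substitution $h \mapsto h\exp(-t\tau(x))$ transfers this factor onto $\psi_2$. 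Differentiating at $t = 0$ yields
\[\cL^1_{\sigma(x)}K(\psi_1\otimes\overline{\psi_2}) \;=\; \iint \overline{\psi_1(g)}\, (\cL_{\sigma(\tau(x))}\psi_2)(h)\, \varphi(gh^*)\,dg\,dh \;=\; -\cL^2_{\sigma(\tau(x))}K(\psi_1\otimes\overline{\psi_2}),\]
which is precisely $\sigma$-compatibility.

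The main subtlety, rather than a genuine obstacle, is that $\varphi$ is only assumed continuous, so the derivatives must be shifted off of $\varphi$ and onto the smooth test functions via the changes of variable above; a direct integration by parts against $\varphi$ is not available. Tracking the signs introduced by $\tau$ (which acts by $-1$ on $\fq$) and by the distinction between the left-invariant vector fields $\sigma(x)$ and the right-invariant Haar measure is routine once the displayed identity is in place.
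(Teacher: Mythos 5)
Your proposal is correct and follows essentially the same route as the paper: positive definiteness via Riemann-sum approximation of the double integral by sums of the form \eqref{eq:posdef-condition}, and $\sigma$-compatibility by pulling the derivative outside the integral and shifting it from the first slot to the second through two right-translation changes of variable in the right-invariant Haar integral. The only blemish is a sign slip in your displayed identity, which should read $g\exp(-tx)h^* = g\bigl(h\exp(t\L(\tau)(x))\bigr)^*$; since the way you actually apply it --- rewriting $\varphi(g\exp(-tx)h^*)$ as $\varphi\bigl(g(h\exp(t\L(\tau)(x)))^*\bigr)$ --- matches the corrected version, the argument goes through as written.
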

\begin{proof}
  The positivity of \(K\) follows from the positive definiteness of the kernel \(K^\varphi\) and the fact that we can approximate the integral in \eqref{eq:pd-fun-dist} by sums in the form of \eqref{eq:posdef-condition}.
  For \(x \in \g\), the flow of \(\sigma(x)\) is given by
  \[\Phi^x : \R \times G \rightarrow G, \quad \Phi^x_t(g) := g\exp(tx).\]
  Hence we have for \(\psi_1,\psi_2 \in C_c^\infty(S)\):
  \begin{align*}
    \la \cL_{\sigma(x)} \psi_1, \psi_2 \ra_\varphi &= \int_G\int_G \oline{(\cL_{\sigma(x)}\psi_1)(g)}\psi_2(h)\varphi(gh^*)\, dg dh\\
                                         &= \int_G\int_G \derat{0} \oline{\psi_1(g\exp(tx))}\psi_2(h) \varphi(gh^*)\, dg dh \\
                                         &= \derat{0} \int_G\int_G \oline{\psi_1(g\exp(tx))}\psi_2(h) \varphi(gh^*)\, dg dh \\
                                         &= \derat{0} \int_G\int_G \oline{\psi_1(g)}\psi_2(h)\varphi(g\exp(-tx)h^*)\, dg dh \\
                                         &= \derat{0} \int_G\int_G \oline{\psi_1(g)}\psi_2(h\tau(\exp(-tx)))\varphi(gh^*)\, dg dh \\
                                         &= \int_G \int_G \oline{\psi_1(g)}(\cL_{-\sigma(\L(\tau)(x))}\psi_2)(h) \varphi(gh^*) \, dg dh \\
                                         &= \la \psi_1, \cL_{-\sigma(\L(\tau)(x))}\psi_2 \ra_\varphi. \qedhere
  \end{align*}
  This shows that \(K\) is \(\sigma\)-compatible.
\end{proof}

For every strongly continuous representation \((\pi,\cH)\) of \(S\) and \(v \in \cH\), the matrix coefficient \(\pi^{v,v}(s) := \la v, \pi(s)v\ra\) is a continuous positive definite function. If \((\pi,\cH,v)\) is cyclic, i.e.\ \(v \in \cH\) is such that \(\pi(S)v\) generates a dense subspace in \(\cH\), then the following proposition shows that \((\pi,\cH)\) is unitarily equivalent to a representation on a space of distributions.

\begin{prop}
  \label{prop:cycl-rep-distkernel}
  Let \((\pi,\cH,v)\) be a strongly continuous cyclic \(*\)-representation of \(S\) on \(\cH\). Let \(K \in C^{-\infty}(S \times S)\) be defined as in \eqref{eq:pd-fun-dist}, where \(\varphi = \pi^{v,v}\). Then \((\pi,\cH)\) is unitarily equivalent to a \(*\)-representation \((\pi_K, \cH_K)\) of \(S\) on \(\cH_K\) with the following property:
  For every function \(\psi \in C_c^\infty(S)\) and \(x \in \fq\), there exists \(\varepsilon > 0\) such that
  \[\pi_K(s\exp(tx))K_\psi = \pi_K(s)K_{\psi \circ \Phi_{-t}^x}\]
  for all \(s \in S\) and \(|t| < \varepsilon\) with \(s\exp(tx) \in S\).
\end{prop}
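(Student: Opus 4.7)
My plan is to realize the equivalence through a GNS-type construction centered at the cyclic vector $v$. First I would introduce the intertwiner $J : C_c^\infty(S) \to \cH$ given by the Bochner integral
\[J(\psi) := \int_S \psi(g)\,\pi(g^*)v\,dg,\]
which is well defined because $\pi$ is strongly continuous and $\supp(\psi)$ is compact and contained in $S$. Using $\pi(g^*) = \pi(g)^*$ and the multiplicativity $\pi(g)\pi(h^*) = \pi(gh^*)$ (note that $g\in S$ and $h^*\in S$ imply $gh^*\in S$ by the semigroup property), together with the convention that $\la \cdot, \cdot\ra$ is linear in the second argument, a direct calculation yields
\[\la J(\psi_1),J(\psi_2)\ra_\cH = \int_S\!\!\int_S \oline{\psi_1(g)}\,\psi_2(h)\,\la v,\pi(gh^*)v\ra\,dg\,dh = K(\psi_1\otimes \oline{\psi_2}).\]
Hence $J$ factors through the null space of $K$ and descends to an isometry $U^{-1}\colon \cH_K^0 \to \cH$ with $U^{-1}K_\psi = J(\psi)$.

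Next I would upgrade $U^{-1}$ to a unitary on $\cH_K$ via a density argument. For each $s\in S$, choose nonnegative $\psi_n \in C_c^\infty(S)$ with $\int \psi_n = 1$ and supports shrinking to $\{s^*\}$ (possible since $S$ is open and $s^*\in S$). Strong continuity of $\pi$ and the identity $(s^*)^* = s$ then give $J(\psi_n) \to \pi(s)v$, so by cyclicity $\im J$ is dense in $\cH$. Setting $\pi_K(s) := U\pi(s)U^{-1}$ (with $U = (U^{-1})^{-1}$) therefore produces a strongly continuous $*$-representation of $S$ on $\cH_K$ unitarily equivalent to $(\pi,\cH)$.

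To obtain an explicit formula for $\pi_K$ on $\cH_K^0$, I would push $\pi(s)$ through $J$:
\[\pi(s)J(\psi) = \int_S \psi(g)\,\pi(sg^*)v\,dg.\]
Since $(gs^*)^* = sg^*$, the substitution $h := gs^*$ (with inverse $g = h\tau(s)$, using $s^{-*}=\tau(s)$) combined with right-invariance of the Haar measure rewrites this integral as $J(\psi\circ \rho_{\tau(s)})$, where $\rho_a(h) := ha$. The support of $\psi\circ \rho_{\tau(s)}$ is $\supp(\psi)\cdot s^* \subset S$ by the semigroup property, so it indeed lies in $C_c^\infty(S)$, and
\[\pi_K(s)K_\psi = K_{\psi\circ \rho_{\tau(s)}}.\]

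For the final claim, fix $\psi \in C_c^\infty(S)$ and $x\in \fq$. By compactness of $\supp(\psi)$, openness of $S$, and continuity of right multiplication, there exists $\varepsilon > 0$ (depending only on $\psi$ and $x$, not on $s$) such that $\supp(\psi)\cdot \exp(tx)\subset S$ for all $|t|<\varepsilon$, so $\psi\circ \Phi^x_{-t} \in C_c^\infty(S)$. Combining the intertwining formula above with the identity $\tau(\exp(tx)) = \exp(-tx)$ (valid for $x\in \fq$), both $\pi_K(s\exp(tx))K_\psi$ and $\pi_K(s)K_{\psi\circ \Phi^x_{-t}}$ reduce to $K_{h\mapsto \psi(h\,\tau(s)\exp(-tx))}$. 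Conceptually this is the standard GNS machine followed by a substitution, so there is no deep obstacle; the only genuine subtlety is the careful bookkeeping of the semigroup involution $g\mapsto g^* = \tau(g)^{-1}$ against its infinitesimal version $\L(\tau)$. The locality restriction on $t$ in the final identity is forced solely by the support condition $\supp(\psi)\cdot\exp(tx)\subset S$, since $\exp(tx)$ for $x\in \fq$ need not belong to $S$.
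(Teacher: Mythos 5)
Your proposal is correct and follows essentially the same route as the paper: the GNS-type map $\psi\mapsto\int_S\psi(g)\pi(g^*)v\,dg$, the identity $\la J(\psi_1),J(\psi_2)\ra=K(\psi_1\otimes\oline{\psi_2})$, density of the image via cyclicity, and a right-translation change of variables for the intertwining formula. The only (harmless) cosmetic differences are that you prove density by an approximate-identity argument rather than the paper's orthocomplement argument, and you isolate the clean intermediate formula $\pi_K(s)K_\psi=K_{\psi\circ\rho_{\tau(s)}}$ before specializing to $s\exp(tx)$, whereas the paper runs the substitution directly in the double integral.
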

\begin{proof}
  In the following, we define \(\pi^{u,w}(s) := \la u, \pi(s)w \ra\) for \(u,w \in \cH, s \in S\).
  Consider the map
  \[\gamma : C_c^\infty(S) \rightarrow \cH, \quad \gamma(\psi) := \int_S \psi(s)\pi(s^*)v\, ds.\]
  The range of \(\gamma\) is dense in \(\cH\) because if \(w \in (\im \gamma)^\bot\), then we have for all \(\psi \in C_c^\infty(S)\)
  \[0 = \la \gamma(\psi), w \ra = \int_S \oline{\psi(s)} \la \pi(s^*)v, w \ra \,ds,\]
  which implies \(\pi^{v,w}(s) = 0\) for all \(s \in S\). Since \(v\) is cyclic, this implies \(w = 0\).
  Furthermore, we have
  \begin{align*}
    \la \gamma(\psi_1), \gamma(\psi_2) \ra &= \int_S\int_S \oline{\psi_1(s)}\psi_2(t) \la \pi(s^*) v, \pi(t^*)v \ra \, dsdt = \int_S\int_S \oline{\psi_1(s)}\psi_2(t) \pi^{v,v}(st^*)\,dsdt\\
                                           &= K(\psi_1 \otimes \oline{\psi_2}).
  \end{align*}
  Thus, the Realization Theorem for positive definite kernels \cite[Thm.\ I.1.6]{Ne00} implies that
  \begin{equation}
    \label{eq:rkhs-realization-isom}
    \Psi : \cH \rightarrow \cH_K, \quad \Psi(w)(\psi) := \la \gamma(\psi), w \ra
  \end{equation}
  is a unitary operator with \(\Psi(\gamma(\psi)) = K_\psi\) for \(\psi \in C_c^\infty(S)\). We obtain a strongly continuous \(*\)-representation of \(S\) on \(\cH_K\) by setting \(\pi_K(s) := \Psi \circ \pi(s) \circ \Psi^*\).

  It remains to show the second part of the claim: Let \(\psi \in C_c^\infty(S)\) and \(x \in \fq\). We choose \(\varepsilon > 0\) such that \(g\exp(tx) \in S\) for all \(g \in \supp(\psi)\) and \(|t| < \varepsilon\). Let \(s \in S\), \(|t| < \varepsilon\) such that \(s\exp(tx) \in S\). Then we have for all \(f \in C_c^\infty(S)\):
  \begin{align*}
    (\pi_K(s\exp(tx))K_\psi)(f) &= (\pi_K(s\exp(tx))\Psi(\gamma(\psi)))(f) = \la \gamma(f), \pi(s\exp(tx)) \gamma(\psi) \ra \\
                                &= \int_G\int_G \oline{f(g)}\psi(h) \la \pi(g^*)v, \pi(s\exp(tx)h^*)v \ra \, dgdh \\
                                &= \int_G\int_G \oline{f(g)}\psi(h) \la \pi(g^*)v, \pi(s(h\exp(tx))^*)v \ra \, dgdh \\
                                &= \int_G\int_G \oline{f(g)}\psi(h\exp(-tx)) \la \pi(g^*)v, \pi(sh^*)v \ra \, dgdh \\
                                &= \int_G\int_G \oline{f(g)}(\psi \circ \Phi_{-t}^x)(h) \la \pi(g^*)v, \pi(sh^*)v \ra \, dgdh \\
                                &= (\pi_K(s)K_{\psi \circ \Phi^x_{-t}})(f) \qedhere
  \end{align*}
\end{proof}

\begin{coro}
  \label{cor:rkhs-identification}
 Let \((\pi,\cH,v)\) be a strongly continuous cyclic \(*\)-representation of \(S\) on \(\cH\).
 For a continuous function \(f \in C(S)\), we denote by \(D_f\) the distribution
  \[D_f : C_c^\infty(S) \rightarrow \C, \quad \psi \mapsto \int_S \oline{\psi(s)}f(s)\, ds.\]
  Then, with the notation from {\rm Proposition \ref{prop:cycl-rep-distkernel}}, we obtain a unitary operator
  \[\Psi: \cH \rightarrow \cH_K, \quad w \mapsto D_{\pi^{v,w}}.\]
\end{coro}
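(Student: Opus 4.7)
The plan is to reuse the unitary \(\Psi\colon \cH \to \cH_K\) already produced in Proposition \ref{prop:cycl-rep-distkernel}, namely \(\Psi(w)(\psi) = \langle \gamma(\psi), w\rangle\) with \(\gamma(\psi) = \int_S \psi(s)\pi(s^*)v\,ds\), and simply identify the distribution on \(S\) that represents \(\Psi(w)\) via the canonical embedding \(\iota'\colon \cH_K \hookrightarrow C^{-\infty}(S)\) from \eqref{eq:reprod-dist-incl}. So the corollary is a direct unwinding of definitions once one verifies that the distribution \(\psi \mapsto \langle \gamma(\psi), w\rangle\) is precisely \(D_{\pi^{v,w}}\).

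The computation I would carry out goes as follows. Since \(\pi\) is strongly continuous and \(\psi \in C_c^\infty(S)\) has compact support, \(s \mapsto \psi(s)\pi(s^*)v\) is a continuous compactly supported \(\cH\)-valued function, so \(\gamma(\psi)\) is a Bochner integral and the inner product commutes with the integral. Combined with the antilinearity of \(\langle \cdot, \cdot\rangle\) in the first slot and the \(*\)-representation identity \(\pi(s^*) = \pi(s)^*\), this yields
\[
\Psi(w)(\psi) = \langle \gamma(\psi), w\rangle = \int_S \overline{\psi(s)}\,\langle \pi(s^*)v, w\rangle\, ds = \int_S \overline{\psi(s)}\,\langle v, \pi(s)w\rangle\, ds = D_{\pi^{v,w}}(\psi).
\]
Thus \(\Psi(w) = D_{\pi^{v,w}}\) as elements of \(C^{-\infty}(S)\), and since \(\Psi\) is already known to be unitary by Proposition \ref{prop:cycl-rep-distkernel}, the corollary follows.

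There is essentially no obstacle: the only subtlety is the justification of the interchange of the integral and the inner product, which is immediate from continuity of \(\langle \cdot,w\rangle\) and compact support of \(\psi\). I would write the proof as a two-line calculation, invoking Proposition \ref{prop:cycl-rep-distkernel} for unitarity and the above chain of equalities for the identification \(\Psi(w) = D_{\pi^{v,w}}\).
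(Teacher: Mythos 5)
Your proof is correct and is exactly the intended argument: the paper states this corollary without proof as an immediate consequence of Proposition \ref{prop:cycl-rep-distkernel}, whose proof already contains the identity \(\la \gamma(\psi), w\ra = \int_S \oline{\psi(s)}\la \pi(s^*)v,w\ra\,ds\) that you combine with \(\pi(s^*)=\pi(s)^*\) to identify \(\Psi(w)\) with \(D_{\pi^{v,w}}\).
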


\begin{rem}
  \label{rem:rkhs-identification}
  Let \((\pi,\cH,v_0)\) be a strongly continuous cyclic \(*\)-representation of \(S\) on \(\cH\) and let \(K\) be defined as in \eqref{eq:pd-fun-dist} with \(\varphi = \pi^{v_0,v_0}\). We identify every \(x \in \g\) with the vector field \(\sigma(x)(s) := \derat{0}s\exp(tx)\) on \(S\). Then we can use the unitary operator \eqref{eq:rkhs-realization-isom} from Proposition \ref{prop:cycl-rep-distkernel} to identify the operators \(\cL^K_x\) from Proposition \ref{prop:iker-geom-fro} on \(\cH_K\) with operators on \(\cH\): Therefore, \(\cH\) contains a dense subspace
  \[\cH^0 := \spann \{\pi(f)v_0 : f \in C_c^\infty(S)\}, \quad \text{where } \pi(f) := \int_S f(g)\pi(g^*)\, dg.\]
  For every \(x \in \fg\), there exists a densely defined operator
  \[\cL_x^\pi : \cD(\cL_x^\pi) \rightarrow \cH, \quad \cD(\cL_x^\pi) := \{w \in \cH : (\exists v \in \cH)\, \cL_x^K D_{\pi^{v_0,w}} = D_{\pi^{v_0,v}}\}\]
  (cf.\ Corollary \ref{cor:rkhs-identification}) with \(\cH^0 \subset \cD(\cL_x^\pi)\) and
  \begin{equation}
    \label{eq:rkhs-id-liederiv}
    \cL_x^\pi \pi(f)v_0 = \pi(\cL_{\L(\tau)(x)}f)v_0, \quad \text{for } f \in C_c^\infty(S).
  \end{equation}
  If \(x \in \fq\), then \(\cL_x^\pi\) is selfadjoint and \(\cH^0\) is a core of \(\cL_x^\pi\) (cf.\ Proposition \ref{prop:iker-geom-fro}(c)).
\end{rem}
By applying Theorem \ref{thm:iker-si-rep} to the case of positive definite distributions induced by \(*\)-subsemigroup representations, we obtain the following
\begin{thm}
  \label{thm:si-integration}
  Let \((\pi,\cH)\) be a strongly continuous non-degenerate \(*\)-representation of \(S\). Then there exists a unique strongly continuous unitary representation \((\pi_1^c,\cH)\) of the 1-connected Lie group \(G_1^c\) with Lie algebra \(\g_1^c = [\fq,\fq] \oplus i\fq\) such that, for each \(x \oplus iy \in [\fq,\fq] \oplus i\fq\), the infinitesimal generator \(\p\pi_1^c(x+iy)\) of the one-parameter group \(t \mapsto \pi_1^c(\exp(t(x+iy)))\) satisfies
  \begin{equation}
    \label{eq:si-integration}
    \p\pi_1^c(x+iy)\pi(f) = \pi((\cL_x-i\cL_y)f) \quad \text{for all } f \in C_c^\infty(S).
  \end{equation}
\end{thm}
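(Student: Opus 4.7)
The plan is to reduce to the cyclic case, realize the representation on a reproducing-kernel Hilbert space of distributions via Proposition \ref{prop:cycl-rep-distkernel}, and then invoke Theorem \ref{thm:iker-si-rep}. Since $(\pi,\cH)$ is strongly continuous and non-degenerate, a standard Zorn-style argument yields an orthogonal decomposition $\cH = \widehat{\bigoplus}_\alpha \cH_\alpha$ into $\pi$-invariant closed subspaces each carrying a cyclic subrepresentation $(\pi_\alpha,\cH_\alpha,v_\alpha)$. It therefore suffices to construct $\pi_1^c$ on each cyclic summand and form the direct sum, whose strong continuity is automatic from the uniform boundedness of unitaries.

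On a fixed cyclic triple $(\pi,\cH,v_0)$ I set $\varphi := \pi^{v_0,v_0}$ and let $K$ be the positive definite distribution on $S\times S$ from \eqref{eq:pd-fun-dist}, which by Proposition \ref{prop:pd-fun-dist} is $\sigma$-compatible for $\sigma(x)(s):=\derat{0}s\exp(tx)$. Theorem \ref{thm:iker-si-rep} then produces a strongly continuous unitary representation of $G_1^c$ on $\cH_K$, and pulling it back through the unitary intertwiner $\Psi:\cH\to\cH_K$ of Proposition \ref{prop:cycl-rep-distkernel} defines $\pi_1^c$ on $\cH$. To verify \eqref{eq:si-integration} I combine Remark \ref{rem:si-integration}---which gives $\p\pi_1^c(x)\supset\cL_x^K|_{\cH_K^0}$ for $x\in[\fq,\fq]\subset\fh$ and $\p\pi_1^c(iy)=i\cL_y^K$ for $y\in\fq$---with the transport identity \eqref{eq:rkhs-id-liederiv} of Remark \ref{rem:rkhs-identification}. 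Since $\L(\tau)x=x$ for $x\in[\fq,\fq]$ and $\L(\tau)y=-y$ for $y\in\fq$, this gives
\[\p\pi_1^c(x+iy)\,\pi(f)v_0 \;=\; \pi(\cL_x f)v_0 - i\pi(\cL_y f)v_0 \;=\; \pi((\cL_x-i\cL_y)f)v_0\]
for every $f\in C_c^\infty(S)$, and the full operator identity on $\cH_\alpha$ (and then on $\cH$ via the direct sum) follows from the closedness of $\p\pi_1^c(x+iy)$, the boundedness of $\pi(f)$ and $\pi((\cL_x-i\cL_y)f)$, and the density of $\pi(S)v_0$ in $\cH_\alpha$.

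For uniqueness, the subspace $\cH^0 := \mathrm{span}\{\pi(f)v : f\in C_c^\infty(S),\,v\in\cH\}$ is dense in $\cH$---combining non-degeneracy of $\pi$ with an approximate-identity argument that approximates $\pi(s_0)v$ by $\pi(f_n)v$ for bump functions $f_n\in C_c^\infty(S)$ concentrating at $s_0\in S$---and consists of analytic vectors for every $\p\pi_1^c(iy)$, $y\in\fq$, by Proposition \ref{prop:iker-geom-fro}(c) transported through $\Psi$. Setting $x=0$ in \eqref{eq:si-integration} then determines $\p\pi_1^c(iy)$ on $\cH^0$, and Nelson's theorem together with the fact that $\exp(i\fq)$ generates the connected Lie group $G_1^c$ forces $\pi_1^c$ to be unique. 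I expect the main technical subtlety to be the promotion of the cyclic-vector computation above to a genuine operator identity $\p\pi_1^c(x+iy)\pi(f)=\pi((\cL_x-i\cL_y)f)$ valid on all of $\cH$, which requires careful bookkeeping with the domains of the unbounded generators and the covariance inherent in Proposition \ref{prop:cycl-rep-distkernel}.
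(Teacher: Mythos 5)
Your proposal is correct and follows essentially the same route as the paper: decompose into cyclic summands, realize each via Proposition \ref{prop:cycl-rep-distkernel} on a reproducing-kernel space of distributions, apply Theorem \ref{thm:iker-si-rep}, verify \eqref{eq:si-integration} on the dense subspace $\cH^0$ using \eqref{eq:rkhs-id-liederiv} (your sign bookkeeping with $\L(\tau)$ is right), and extend by closedness of $\p\pi_1^c(x+iy)$ and boundedness of $\pi(f)$. Your uniqueness argument via essential skew-adjointness on $\cH^0$ and the fact that $i\fq$ generates $\g_1^c$ is exactly what underlies the paper's appeal to the uniqueness clause of Theorem \ref{thm:iker-si-rep}.
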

\begin{proof}
  Since \(\pi\) is non-degenerate, there exists a decomposition \((\pi,\cH) \cong \hat\bigoplus_{j \in J} (\pi_j, \cH_j, v_j)\) of \(\pi\) into cyclic subrepresentations.
  For each \(j \in J\), let \(K^j\) be the positive definite distribution we defined in Proposition \ref{prop:cycl-rep-distkernel}.
  Then we obtain a continuous unitary representation \((\pi_1^c,\cH)\) on \(G_1^c\) by applying Theorem \ref{thm:iker-si-rep} and Remark \ref{rem:rkhs-identification} to each \(K^j\).
  Let now \(f \in C_c^\infty(S)\) and \(x \oplus iy \in [\fq,\fq] \oplus i\fq\).
  Since \(\p\pi_1^c(x + iy)\) is closed and \(\pi(f)\) is a continuous operator, it suffices to show \eqref{eq:si-integration} on a dense subspace.
  But on each of the subspaces \(\pi_j(S)v_j\), equation \eqref{eq:si-integration} follows from \eqref{eq:rkhs-id-liederiv}.
  Hence, it also holds on \(\cH\).
  The uniqueness of \(\pi_1^c\) follows from the uniqueness on the subspaces \(\cH_j\) for \(j \in J\) (cf.\ Theorem \ref{thm:iker-si-rep}).
\end{proof}

We call \((\pi_1^c,\cH)\) the \emph{analytic continuation of \((\pi,\cH)\) to \(G_1^c\)}.

\begin{rem}
  \label{rem:rkhs-ident-noncyclic}
  By construction, the infinitesimal generators of the analytic continuation \((\pi_1^c,\cH)\) to \(G_1^c\) are direct sums of the Lie derivation operators we constructed in Remark \ref{rem:rkhs-identification}: Let \((\pi,\cH) = \hat\bigoplus_{j \in I} (\pi_j,\cH_j,v_j)\) be a decomposition of \(\pi\) into cyclic subrepresentations. For \(x \in \g\), we consider the operator
  \[\cL_x^\pi : \cD(\cL_x^\pi) \rightarrow \cH, \quad \cD(\cL_x^\pi) := \{(v_j)_{j \in I} \in \hat\bigoplus_{j \in I} \cH_j : (\forall j \in I) v_j \in \cD(\cL_x^{\pi_j}), \, \sum_{j \in I} \|\cL_x^{\pi_j}v_j\|^2 < \infty\},\]
  with \(\cL_x^\pi(v_j)_{j \in I} := (\cL_x^{\pi_j}v_j)_{j \in I}\). In particular, for \(x \in [\fq,\fq]\) and \(y \in \fq\), the operator \(\cL_{x + y}^\pi\) is defined on the dense subspace \(\cH^0 := \spann\{\pi(f)\cH : f \in C_c^\infty(S)\}\) and we have
  \[i\cL_y^\pi = \partial\pi_1^c(iy) \quad \text{and} \quad \cL_x^\pi\lvert_{\cH^0} = \partial\pi_1^c(x)\lvert_{\cH^0}\]
  because \(\cH^0\) is a core of \(\cL_y^\pi\) (cf. Proposition \ref{prop:iker-geom-fro}(c)).
\end{rem}

\subsection{The analytic continuation}

Up to this point, we have only shown that a strongly continuous unitary representation of \(G_1^c\) can be constructed from a strongly continuous \(*\)-representation of \(S\). In this section, we will explain how these two representations are related and, in particular, why the name ``analytic continuation'' is justified.

\begin{lem}
  \label{lem:dom-exp-fro}
  Let \(H : \cD(H) \rightarrow \cH\) be a (possibly unbounded) selfadjoint operator on \(\cH\) and let \(t_-,t_+ \in \R\) such that \(t_- \leq 0 < t_+\). Then, for every \(v \in \cH\), the following statements are equivalent:
  \begin{enumerate}
    \item \(v \in \cD(e^{t_- H}) \cap \cD(e^{t_+ H})\)
    \item There exists a continuous curve \(\gamma: [t_-,t_+] \rightarrow \cH\) which is differentiable on \((t_-,t_+)\) and solves the initial value problem
      \begin{equation}
        \label{eq:fro-ivp}
        \gamma'(s) = H\gamma(s),\quad \gamma(0) = v \quad \text{for } s \in (t_-,t_+).
      \end{equation}
  \end{enumerate}
  If the above conditions are satisfied for a vector \(v \in \cH\), then the unique solution of \eqref{eq:fro-ivp} is given by \(\gamma(t) = e^{tH}v\) for \(t \in [t_-,t_+]\).
\end{lem}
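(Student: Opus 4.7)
The plan is to prove the lemma via the spectral theorem for the selfadjoint operator $H$, using spectral projections onto bounded intervals to reduce the uniqueness/existence question to a standard ODE on a subspace where $H$ acts boundedly. Let $E$ denote the spectral measure of $H$, so that $v \in \cD(e^{tH})$ iff $\int_\R e^{2t\lambda} \, d\|E(\lambda)v\|^2 < \infty$.

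For the implication (a) $\Rightarrow$ (b), I would first observe the elementary bound $e^{2t\lambda} \leq e^{2t_+\lambda} + e^{2t_-\lambda}$ for $t \in [t_-,t_+]$ and $\lambda \in \R$ (split into $\lambda \geq 0$ and $\lambda < 0$). Hypothesis (a) then implies $v \in \cD(e^{tH})$ for all $t \in [t_-,t_+]$, so $\gamma(t) := e^{tH}v$ is well-defined on $[t_-,t_+]$. Continuity on $[t_-,t_+]$ and the identity $\gamma'(s) = H\gamma(s)$ on $(t_-,t_+)$ follow from dominated convergence applied to the spectral integrals, with the dominating function $\lambda \mapsto (1 + |\lambda|)\bigl(e^{2t_+\lambda} + e^{2t_-\lambda}\bigr)$ (integrable against $d\|E(\lambda)v\|^2$ on any compact subinterval of $(t_-,t_+)$).

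The main content is the reverse implication (b) $\Rightarrow$ (a), together with uniqueness. Given a solution $\gamma$, set $E_n := E([-n,n])$ and $H_n := H\lvert_{E_n\cH}$, a bounded selfadjoint operator on $E_n\cH$. Since $E_n$ commutes with $H$ in the sense that $E_n \cD(H) \subset \cD(H)$ and $H E_n = E_n H$ on $\cD(H)$, and since $\gamma(s) \in \cD(H)$ for $s \in (t_-,t_+)$ by $\gamma'(s) = H\gamma(s)$, the curve $\eta_n(s) := E_n \gamma(s)$ is continuous on $[t_-,t_+]$, differentiable on $(t_-,t_+)$, and solves $\eta_n'(s) = H_n \eta_n(s)$ with $\eta_n(0) = E_n v$ inside the Hilbert space $E_n\cH$. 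A Gronwall argument (or equivalently, computing $\frac{d}{ds}\bigl(e^{-sH_n}\eta_n(s)\bigr) = 0$, which is legitimate because $H_n$ is bounded) forces $\eta_n(s) = e^{sH_n}E_n v = E_n e^{sH}v$ on $(t_-,t_+)$, and then by continuity on the closed interval this identity extends to $s = t_\pm$.

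From this I can harvest (a) and the closed-form expression. At $s = t_\pm$, we have $\|E_n e^{t_\pm H} v\|^2 = \|E_n \gamma(t_\pm)\|^2 \leq \|\gamma(t_\pm)\|^2$, and
\[
\|E_n e^{t_\pm H} v\|^2 = \int_{[-n,n]} e^{2 t_\pm \lambda}\, d\|E(\lambda)v\|^2.
\]
Letting $n \to \infty$ and invoking monotone convergence gives $\int_\R e^{2 t_\pm \lambda}\, d\|E(\lambda)v\|^2 \leq \|\gamma(t_\pm)\|^2 < \infty$, hence $v \in \cD(e^{t_- H}) \cap \cD(e^{t_+ H})$. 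Using the bound from the first part, $v \in \cD(e^{tH})$ for every $t \in [t_-,t_+]$, and passing $n \to \infty$ in $E_n \gamma(t) = E_n e^{tH} v$ yields $\gamma(t) = e^{tH}v$, which simultaneously establishes uniqueness.

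I expect the main obstacle to be the uniqueness/ODE step at the boundary: one must carefully argue that the identity on the open interval $(t_-,t_+)$ extends to the closed interval using continuity of $\gamma$ and the continuity of $t \mapsto E_n e^{tH}v$, and that the spectral cutoff $E_n$ truly preserves the ODE, which relies on the commutation of $E_n$ with the unbounded $H$ on $\cD(H)$. Everything else is routine once the reduction to bounded $H_n$ is set up.
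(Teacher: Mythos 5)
Your proof is correct and follows essentially the same route as the paper: both directions are handled via the spectral theorem, with the reverse implication reduced to a bounded ODE by the spectral cutoffs \(E_n = \chi_{[-n,n]}(H)\) and then a limit \(n \to \infty\). The only cosmetic difference is that you recover \(v \in \cD(e^{t_\pm H})\) by monotone convergence of the truncated spectral integrals, whereas the paper invokes the closedness of \(e^{tH}\) directly; these are interchangeable.
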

\begin{proof}
  Let \(P\) be the spectral measure corresponding to \(H\) and set \(P^w(E) := \la P(E)w,w\ra\), where \(w \in \cH\) and \(E \subset \R\) is Borel-measurable. Then we have
  \[\la w,Hw \ra = \int_{-\infty}^\infty \lambda \,dP^w (\lambda)\]
  for \(w \in \cD(H)\) and, for a Borel-measurable function \(f\) on \(\R\), we have \(w \in \cD(f(H))\) if and only if \(f \in L^2(\R, P^w)\).

  Suppose that \(v \in \cD(e^{t_- H}) \cap \cD(e^{t_+ H})\).
  Then, using the spectral integral representation of \nolinebreak \(H\), we see that \(v \in \cD(e^{t H})\) for \(t \in [t_-,t_+]\) and \(v \in \cD(He^{s H})\) for \(s \in (t_-,t_+)\).
  Hence, we can define the curve \(\gamma(t) := e^{t H}v\), \(t \in [t_-,t_+]\), which is continuous on \([t_-,t_+]\) and differentiable on \((t_-,t_+)\) by spectral calculus with \(\gamma'(s) = H\gamma(s)\) for \(s \in (t_-,t_+)\).

  Conversely, let \(\gamma: [t_-,t_+] \rightarrow \cH\) be a solution of \eqref{eq:fro-ivp}. We apply the following argument from the proof of \cite[Thm.\ I.1]{Fro80} in order to prove (a):
  For \(m > 0\), let \(E_m := \chi_{[-m,m]}(H)\) be the spectral projection corresponding to the interval \([-m,m]\), and define
  \[\gamma_m(t) := E_m\gamma(t), \quad H_m := E_m H = H E_m, \quad t \in [t_-,t_+].\]
  Since \(H_m\) is a bounded operator and \(\gamma_m\) satisfies
  \[\gamma_m'(s) = E_m\gamma'(s) = E_m H\gamma(s) = H_m\gamma_m(s), \quad s \in (t_-,t_+),\]
  we obtain
  \[\gamma_m(t) = e^{tH_m}\gamma_m(0) = e^{t H} \gamma_m(0), \quad t \in [t_-,t_+].\]
  By taking the limit \(m \rightarrow \infty\), we see that
  \[\lim_{m \rightarrow \infty}\gamma_m(0) = \gamma(0) \quad \text{and} \quad \lim_{m \rightarrow \infty} e^{t H} \gamma_m(0) = \lim_{m \rightarrow \infty} \gamma_m(t) = \gamma(t), \quad t \in [t_-,t_+],\]
  which implies \(\gamma(0) = v \in \cD(e^{tH})\) and \(e^{tH}v = \gamma(t)\) for \(t \in [t_-,t_+]\) because \(e^{tH}\) is closed.
\end{proof}

\begin{lem}
  \label{lem:semigrp-smooth-vec}
  Let \((\pi,\cH)\) be a strongly continuous representation of \(S\) and let \(f \in C_c^\infty(S)\). Then the range of the operator 
  \[\pi(f) := \int_S f(s)\pi(s^*)\, ds\]
  on \(\cH\) consists of smooth vectors of \((\pi,\cH)\) in the sense that the orbit map
  \[\pi^v : S \rightarrow \cH, \quad \pi^v(s) = \pi(s)v\]
  is smooth for all \(v \in \im(\pi(f))\).
\end{lem}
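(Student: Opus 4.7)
The plan is to reduce smoothness of the orbit map to a standard differentiation-under-the-integral argument via the identity
\[
\pi(s)\pi(f)v \;=\; \pi(f_s)v, \qquad \text{where } f_s(r) := f(r\tau(s)),
\]
which transfers all $s$-dependence into a smooth scalar family $f_s \in C_c^\infty(S)$, eliminating the need to analyze orbit maps of general vectors.

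Fix $v \in \cH$, set $w := \pi(f)v$, and pick $s_0 \in S$. Since $\pi(s)$ is a bounded operator, one may move it under the Bochner integral to obtain
\[
\pi(s)w \;=\; \int_S f(t)\pi(st^*)v\,dt.
\]
I would then perform the substitution $t = r\tau(s)$. This is just right translation in $G$ by $\tau(s)$, so by right-invariance of the Haar measure $dt = dr$. A direct calculation using $(ab)^* = b^*a^*$ and $\tau\circ\tau = \id$ gives $st^* = r^*$. Finally, $S$ being a $*$-subsemigroup forces $\tau(s)^{-1} \in S$ for every $s \in S$, whence
\[
\supp(f_s) \;=\; \supp(f)\cdot\tau(s)^{-1} \;\subset\; S\cdot S \;\subset\; S,
\]
so $f_s \in C_c^\infty(S)$. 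Combining these observations yields the desired identity $\pi(s)w = \pi(f_s)v$.

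To conclude, I would fix a compact neighborhood $U$ of $s_0$ in $S$, set $K := \supp(f)\cdot\{\tau(s)^{-1} : s \in U\}$, and note that $K$ is a compact subset of $S$ containing $\supp(f_s)$ for every $s \in U$. Hence
\[
\pi(s)w \;=\; \int_K f(r\tau(s))\,\pi(r^*)v\,dr \qquad (s \in U),
\]
with integration over a fixed compact set. The scalar integrand $(s,r) \mapsto f(r\tau(s))$ is smooth in $s$, and all its $s$-derivatives (in any chart around $s_0$) are jointly continuous on $U \times K$; the factor $r \mapsto \pi(r^*)v$ is continuous on $K$ by strong continuity of $\pi$. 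A standard vector-valued dominated-convergence argument then permits differentiation under the integral to arbitrary order, so $s \mapsto \pi(s)w$ is smooth on $U$; since $s_0$ was arbitrary, $\pi^w \in C^\infty(S,\cH)$.

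The only step that requires genuine thought is the identity $\pi(s)\pi(f)v = \pi(f_s)v$: its verification hinges on the $*$-invariance of $S$ to guarantee both that the substitution $t = r\tau(s)$ is compatible with right-invariance of Haar measure and that the translated support $\supp(f_s)$ stays inside $S$. Everything else is routine.
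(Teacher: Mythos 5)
Your proof is correct and follows essentially the same route as the paper's: both rest on the identity \(\pi(s)\pi(f)v = \pi(f_s)v\) with \(f_s(r) = f(r\tau(s))\), obtained by moving \(\pi(s)\) under the integral and using right-invariance of the Haar measure. The only difference is that the paper delegates the final smoothness step to a cited result on vector-valued distributions (\cite[Prop.\ A 2.4.1]{Wa72}), whereas you carry out the differentiation under the integral sign over a fixed compact set explicitly; both are fine.
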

\begin{proof}
  Let \(f \in C_c^\infty(S), v \in \cH,\) and let \(T_\pi\) be the \(\cH\)-valued distribution on \(G\) defined by
  \[T_\pi(h) := \int_G \oline{h(g)}(\1_{S}(g)\pi(g^*))v\,dg = \int_S \oline{h(g)}\pi(g^*)v \,dg, \quad h \in C_c^\infty(G).\]
  Then, by using the right-invariance of \(dg\) and regarding \(f\) as an element in \(C_c^\infty(G)\), we see that
  \[s \mapsto \pi^{\pi(f)v}(s) = \int_S f(g)\pi(sg^*)v_0\,dg = \int_S f(g)\pi((gs^*)^*)v_0\,dg =\int_S f(g\tau(s))\,dT_\pi(g)\]
  is smooth by \cite[Prop.\ A 2.4.1]{Wa72}.
\end{proof}

\begin{lem}
  \label{lem:deriv-op-flow}
  Let \((\pi,\cH)\) be a non-degenerate \(*\)-representation of \(S\) and, for \(x \in \g\), let \(\cL^\pi_x\) be defined as in {\rm Remark \ref{rem:rkhs-ident-noncyclic}}. Let \(\Phi^x_t(g) := g\exp(tx), g \in G\), be the flow of the left-invariant vector field corresponding to \(x\). Then
  \[\derat{0}\pi(f \circ \Phi^x_t) = \pi(\cL_x f) = \cL_{\L(\tau)(x)}^\pi(f) \quad \text{for all } f \in C_c^\infty(S).\]
\end{lem}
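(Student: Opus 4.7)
The plan is to prove the two equalities separately: the first is an instance of differentiation under the integral sign, the second is a reinterpretation via the operator $\cL^\pi$ established in Remark \ref{rem:rkhs-ident-noncyclic}.

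For the first equality $\derat{0}\pi(f\circ\Phi^x_t) = \pi(\cL_x f)$, I would work pointwise in the strong operator topology. Since $S$ is open, $f\in C_c^\infty(S)$, and $\Phi^x_t(g)=g\exp(tx)$ is a smooth global flow on $G$, there exist $\varepsilon>0$ and a compact set $K\subset S$ with $\supp(f\circ\Phi^x_t)\subset K$ for all $|t|<\varepsilon$, and a first-order Taylor expansion gives that the difference quotients $t^{-1}(f\circ\Phi^x_t-f)$ converge to $\cL_x f$ uniformly on $K$. The uniform bound $M:=\sup_{s\in K}\|\pi(s^*)\|<\infty$ follows from the strong continuity of $\pi$ via Banach--Steinhaus applied to the continuous orbit maps $s\mapsto\pi(s)v$. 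Combining these for fixed $v\in\cH$ allows the interchange of limit and integral, yielding
\[\lim_{t\to 0} t^{-1}\bigl(\pi(f\circ\Phi^x_t)-\pi(f)\bigr)v \;=\; \int_S (\cL_x f)(s)\pi(s^*)v\,ds \;=\; \pi(\cL_x f)v.\]

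For the second equality $\pi(\cL_x f) = \cL_{\L(\tau)(x)}^\pi\pi(f)$, I would invoke Remark \ref{rem:rkhs-identification}, which in the cyclic case $(\pi,\cH,v_0)$ gives $\cL_y^\pi\pi(f)v_0 = \pi(\cL_{\L(\tau)(y)}f)v_0$ for every $y\in\g$. Substituting $y=\L(\tau)(x)$ and using $\L(\tau)^2=\id_\g$ yields the desired identity at the cyclic vector $v_0$. For a general non-degenerate $\pi$, I decompose $(\pi,\cH)\cong\hat\bigoplus_j(\pi_j,\cH_j,v_j)$ into cyclic subrepresentations; since by Remark \ref{rem:rkhs-ident-noncyclic} the operator $\cL_{\L(\tau)(x)}^\pi$ is the direct sum of the component operators $\cL_{\L(\tau)(x)}^{\pi_j}$, the cyclic identity on each $v_j$ assembles into the operator equality on the dense subspace $\cH^0=\spann\{\pi(f)\cH:f\in C_c^\infty(S)\}$.

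The main obstacle is the interchange of differentiation and integration in the first step. Because $\pi$ is only strongly (not norm) continuous, the estimate has to be carried out pointwise in $v\in\cH$, and it relies critically on the uniform operator bound $\sup_{s\in K}\|\pi(s)\|<\infty$, itself a Banach--Steinhaus application. Everything else is a careful unpacking of the definitions made possible by the constructions of the previous section.
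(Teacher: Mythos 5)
Your proof follows essentially the same route as the paper's: differentiation under the integral sign (justified by compact support of \(f\) and local boundedness of \(\pi\), cf.\ Remark \ref{rem:semgrp-rep-locbnd}) for the first equality, and the reproducing-kernel identification \eqref{eq:rkhs-id-liederiv}, assembled over the cyclic decomposition, for the second. The paper is terser---it cites Proposition \ref{prop:iker-geom-fro}(a) directly for the last step and does not spell out the uniform-convergence estimate---but the content is the same, and your explicit treatment of the limit--integral interchange only adds detail rather than changing the argument.
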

\begin{proof}
  Let \(v \in \cH\),\(f \in C_c^\infty(S)\), and \(x \in \g\). Then, since the support of \(f\) is compact, we obtain
  \begin{align*}
    \derat{0} \pi(f \circ \Phi_t^x)v &= \derat{0} \int_G f(g\exp(tx))\pi(g^*)v \,dg = \int_G \derat{0} f(g\exp(tx))\pi(g^*)v \,dg\\
                                     &= \int_G \derat{0} (\cL_x f)(g)\pi(g^*) = \pi(\cL_x f)v = \cL_{\L(\tau)(x)}^\pi\pi(f)v.
  \end{align*}
  where the last equality follows from Proposition \ref{prop:iker-geom-fro}(a). 
\end{proof}

\begin{prop}
  \label{prop:semgrp-rep-curve}
  Let \((\pi,\cH)\) be a strongly continuous non-degenerate \(*\)-representation of \(S\) and let \(s \in S\) and \(v \in \cH\). For \(x \in \g\) and \(\varepsilon > 0\) such that \(\exp(tx)s \in S\) for \(|t| < \varepsilon\), consider the curve
  \[\gamma: (-\varepsilon,\varepsilon) \rightarrow \cH, \quad \gamma(t) := \pi(\exp(tx)s)v.\]
  Then the following assertions hold:
  \begin{enumerate}
    \item Let \(\cL_x^\pi\) and \(\cH^0\) be defined as in {\rm Remark \ref{rem:rkhs-ident-noncyclic}}. If \(\gamma\) is differentiable, then \(\gamma(t) \in \cD((\cL_{\L(\tau)(x)}^\pi\lvert_{\cH^0})^*)\) and \((\cL^\pi_{-\L(\tau)(x)}\lvert_{\cH^0})^*\gamma(t) = \gamma'(t)\) for all \(|t| < \varepsilon\).
    \item If \(x \in \fq\), then the following holds:
      \begin{enumerate}
        \item \(\gamma\) is analytic in \((-\varepsilon,\varepsilon)\).
        \item \(\gamma(t) \in \cD(\cL_x^\pi)\) for all \(t \in (-\varepsilon,\varepsilon)\)
        \item \(\gamma\) solves the initial value problem \eqref{eq:fro-ivp} for \(H = \cL_x^\pi\) and the initial value \(\pi(s)v\).
        \item \(\pi(s)v \in \cD(e^{t\cL_x^\pi})\) and \(\gamma(t) = e^{t\cL_x^\pi}\pi(s)v\) for \(t \in (-\varepsilon,\varepsilon)\).
      \end{enumerate}
  \end{enumerate}
\end{prop}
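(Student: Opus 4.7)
The plan is to establish (a) by verifying the adjoint identity against the dense subspace \(\cH^0\), and then to derive all four assertions in (b) from (a) combined with Lemma \ref{lem:dom-exp-fro}, after first handling smooth vectors and extending by a closedness argument.

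For (a), I test the claimed identity against \(w = \pi(f)u\) with \(f \in C_c^\infty(S)\) and \(u \in \cH\), which span \(\cH^0\). By Lemma \ref{lem:deriv-op-flow}, \(\cL_{\L(\tau)(x)}^\pi \pi(f)u = \frac{d}{dh}\big|_{h=0}\pi(f \circ \Phi_h^x)u\). The main computation, using the \(*\)-representation identity \(\pi(g^*)^* = \pi(g)\) together with the right-invariance of Haar measure via the substitution \(g \mapsto g\exp(hx)\), produces
\[
\langle \pi(f \circ \Phi_h^x)u, \gamma(t)\rangle = \int_S \oline{f(g)}\, \langle \pi(g^*)u, \gamma(t-h)\rangle\, dg
\]
for \(|h|\) small enough that \(\supp(f)\exp(-hx) \subset S\). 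Under the assumed differentiability of \(\gamma\) at \(t\), the uniform bound \(\sup_{g \in \supp(f)}\|\pi(g^*)u\| < \infty\) coming from strong continuity on a compact set justifies differentiation under the integral at \(h=0\), yielding \(-\langle \pi(f)u, \gamma'(t)\rangle\). Combined with Lemma \ref{lem:deriv-op-flow}, this reads \(\langle \cL_{-\L(\tau)(x)}^\pi \pi(f)u, \gamma(t)\rangle = \langle \pi(f)u, \gamma'(t)\rangle\), which is the defining identity for \(\gamma(t) \in \cD((\cL_{-\L(\tau)(x)}^\pi|_{\cH^0})^*)\) with adjoint value \(\gamma'(t)\); by linearity of the adjoint this also puts \(\gamma(t)\) in \(\cD((\cL_{\L(\tau)(x)}^\pi|_{\cH^0})^*)\).

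For (b), the pivotal observation is that when \(x \in \fq\) one has \(\L(\tau)(x) = -x\) and \(\cL_x^\pi\) is selfadjoint with core \(\cH^0\) by Remark \ref{rem:rkhs-ident-noncyclic}, so \((\cL_x^\pi|_{\cH^0})^* = \cL_x^\pi\). Thus, once \(\gamma\) is known to be differentiable, (a) specialises to \(\gamma'(t) = \cL_x^\pi\gamma(t)\), and Lemma \ref{lem:dom-exp-fro} then yields \(\pi(s)v \in \cD(e^{t\cL_x^\pi})\) with \(\gamma(t) = e^{t\cL_x^\pi}\pi(s)v\) for every \(t \in (-\varepsilon,\varepsilon)\), which is (iv). From this identity, (ii) and (iii) follow immediately by spectral calculus, and (i) follows because \(\pi(s)v \in \cD(e^{t\cL_x^\pi})\) for every \(t \in (-\varepsilon,\varepsilon)\) extends by the functional calculus to a holomorphic map \(z \mapsto e^{z\cL_x^\pi}\pi(s)v\) on the strip \(|\Re z| < \varepsilon\).

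The remaining ingredient is to verify differentiability of \(\gamma\) on \((-\varepsilon,\varepsilon)\). I would first treat smooth vectors \(v \in \pi(C_c^\infty(S))\cH\): Lemma \ref{lem:semigrp-smooth-vec} shows that the orbit map \(r \mapsto \pi(r)v\) is smooth on \(S\), so \(\gamma(t)\), being the composition with the smooth curve \(t \mapsto \exp(tx)s\) into \(S\), is smooth, and the argument above yields (iv) for such \(v\). For a general \(v \in \cH\), non-degeneracy of \(\pi\) yields a sequence \(v_n \to v\) with each \(v_n \in \pi(C_c^\infty(S))\cH\). Then \(\pi(s)v_n \to \pi(s)v\) and \(e^{t\cL_x^\pi}\pi(s)v_n = \pi(\exp(tx)s)v_n \to \gamma(t)\) by boundedness of \(\pi(s)\) and \(\pi(\exp(tx)s)\), and closedness of the selfadjoint operator \(e^{t\cL_x^\pi}\) then forces \(\pi(s)v \in \cD(e^{t\cL_x^\pi})\) with \(e^{t\cL_x^\pi}\pi(s)v = \gamma(t)\), completing (iv). This closedness step is the principal technical subtlety: it is what lets us extract the full conclusion for arbitrary vectors in \(\cH\) from the a priori weaker smooth-vector case.
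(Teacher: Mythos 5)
Your proof is correct, and part (a) is essentially the paper's argument: you establish the identity $\la \pi(f\circ\Phi^x_h)u,\gamma(t)\ra=\la\pi(f)u,\gamma(t-h)\ra$ by the same change of variables and differentiate at $h=0$ using Lemma \ref{lem:deriv-op-flow}. For part (b), however, you take a genuinely different route. The paper proves (i) \emph{first} and directly for arbitrary $v\in\cH$: it observes that $\eta(t)=\la v,\pi(s^*\exp(tx)s)v\ra$ gives a positive definite kernel $K^\eta(t,t')=\eta(\tfrac{t+t'}{2})=\la\gamma(\tfrac t2),\gamma(\tfrac{t'}2)\ra$, invokes Widder's theorem to represent $\eta$ as a Laplace transform of a positive measure (hence analytic), and then applies \cite[Thm.\ 5.1]{Ne10b} to get analyticity of $\gamma$; differentiability then feeds into (a) to give (ii) and (iii), and Lemma \ref{lem:dom-exp-fro} gives (iv) last. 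You reverse the order: you prove (iv) first on the dense subspace $\cH^0$ of smooth vectors (where differentiability is free by Lemma \ref{lem:semigrp-smooth-vec}), extend to all of $\cH$ by closedness of the selfadjoint operator $e^{t\cL^\pi_x}$ together with boundedness of $\pi(s)$ and $\pi(\exp(tx)s)$, and then read off (i)--(iii) from the spectral calculus. Your version is more elementary in that it avoids Widder's theorem and the analytic-kernel machinery of \cite{Ne10b} entirely, at the cost of the two-stage (smooth vectors, then closure) argument; the paper's version buys analyticity of $\gamma$ for every $v$ in one stroke, without any approximation. Both arguments are complete; the only stylistic caveat is that your ``differentiation under the integral'' in (a) is unnecessary, since the right-hand side collapses to $\la\pi(f)u,\gamma(t-h)\ra$ and the derivative at $h=0$ is immediate from the assumed differentiability of $\gamma$.
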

\begin{proof}
  (a) Let \(f \in C_c^\infty(S)\) and \(t \in (-\varepsilon,\varepsilon)\). Recall that the flow \(\Phi^x\) of the left-invariant vector field corresponding to \(x\) is given by \(\Phi^x_h(g) := g\exp(hx)\). There exists \(\delta > 0\) such that \((t-\delta,t+\delta) \subset (-\varepsilon,\varepsilon)\) and \(\Phi_h^x(\supp f) \subset S\) for all \(h \in (-\delta,\delta)\).
  Then, using the right invariance of the Haar measure, we see that for all such \(h\) and \(w \in \cH\)
  \begin{align*}
    \la \pi(f)w, \gamma(t+h) \ra &= \int_S \oline{f(g)} \la \pi(g^*)w, \pi(\exp((t+h)x)s)v \ra \, dg\\
                             &= \int_S \oline{f(g)} \la \pi(s^*\exp(tx)^*(g\exp(hx))^*)w, v \ra \, dg\\
                             &= \int_S \oline{f(g\exp(-hx))} \la \pi(s^*\exp(tx)^*g^*)w, v \ra \, dg\\
                             &= \int_S \oline{(f \circ \Phi_{-h}^{x})}(g) \la \pi(g^*)w, \pi(\exp(tx)s)v \ra \,dg \\
                             &= \la \pi(f \circ \Phi_{-h}^{x})w, \gamma(t) \ra.
  \end{align*}
  By Lemma \ref{lem:deriv-op-flow}, we have \(\derat{0} \pi(f \circ \Phi_h^{x})w = \cL^\pi_{\L(\tau)(x)} \pi(f)w\). Hence, we obtain
  \begin{align*}
    \la \pi(f)w, \gamma'(t) \ra &= \lim_{h \rightarrow 0} \la \tfrac{1}{h} (\pi(f \circ \Phi_{-h}^{x})w - \pi(f)w), \gamma(t) \ra  = \la \cL_{-\L(\tau)(x)} \pi(f)w, \gamma(t) \ra.\\
  \end{align*}
  Since this holds for all \(f \in C_c^\infty(S)\), we conclude that \[\gamma(t) \in \cD((\cL^\pi_{\L(\tau)(x)}\lvert_{\cH^0})^*) \quad \text{and} \quad \gamma'(t) = (\cL^\pi_{-\L(\tau)(x)}\lvert_{\cH^0})^* \gamma(t)\] for \(t \in (-\varepsilon,\varepsilon)\).

  (b) (i) Let now \(x \in \fq\). Note that \(s^*\exp(tx)s \in S\) for \(|t| < 2\varepsilon\) and consider the continuous function
  \[\eta: (-2\varepsilon,2\varepsilon) \rightarrow \C, \quad \eta(t) := \la v, \pi(s^*\exp(tx)s)v \ra.\]
  Then the kernel on \((-2\varepsilon,2\varepsilon) \times (-2\varepsilon,2\varepsilon)\) given by
  \[K^{\eta}(t,t') := \eta(\tfrac{t+t'}{2}) = \la \gamma(\tfrac t 2), \gamma(\tfrac{t'}{2}) \ra\]
  is positive definite. By \cite{Wi34}, there exists a positive Borel measure \(\mu\) on \(\R\) such that
  \[\eta(t) = \cL(\mu)(t) := \int_\R e^{-xt} \,d\mu(x), \quad t \in (-2\varepsilon,2\varepsilon)\]
  and \(\eta\) is analytic on \((-2\varepsilon,2\varepsilon)\). This shows that the kernel \(K^{\eta}\) is analytic. Hence \(\gamma\) is analytic as well by \cite[Thm.\ 5.1]{Ne10b}).

  By Remark \ref{rem:rkhs-ident-noncyclic}, \(\cH^0\) is a core of \(\cL^\pi_x\), so that \(\cL^\pi_x = \oline{\cL^\pi_x\lvert_{\cH^0}} = (\cL^\pi_x\lvert_{\cH^0})^*\). Thus, (a) implies (ii) and (iii).
  
  Now (iv) follows from Lemma \ref{lem:dom-exp-fro}.
\end{proof}

\begin{prop}
  \label{prop:semigrp-grp-smoothvec}
  Let \((\pi,\cH)\) be a strongly continuous non-degenerate \(*\)-representation of \(S\) and let \((\pi_1^c,\cH)\) be the analytic continuation of \(\pi\) to \(G_1^c\) {\rm(Theorem \ref{thm:si-integration})}. Then, for every smooth vector \(v \in \cH\) of \((\pi,\cH)\), the set \(\pi(S)v\) consists of smooth vectors of \((\pi_1^c,\cH)\).
\end{prop}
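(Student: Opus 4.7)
The plan is to reduce smoothness of $\pi(s)v$ for $\pi_1^c$ to iterated-domain conditions along the subspace $i\fq \subset \g_1^c$, and to deduce these conditions by repeated application of Proposition \ref{prop:semgrp-rep-curve} combined with the smoothness of the orbit map $s' \mapsto \pi(s')v$ on $S$.

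Fix a basis $y_1,\dots,y_m$ of $\fq$ and an index sequence $(j_1,\dots,j_n)$. Since $S$ is open, for $(t_1,\dots,t_n)$ in a sufficiently small neighborhood of $0 \in \R^n$ the product $g(t) := \exp(t_n y_{j_n}) \cdots \exp(t_1 y_{j_1}) s$ stays in $S$, and the assumption that $v$ is smooth for $\pi$ makes $F(t) := \pi(g(t))v$ a jointly smooth $\cH$-valued function. Setting $w_k(t_1,\dots,t_k) := \pi(\exp(t_k y_{j_k}) \cdots \exp(t_1 y_{j_1}) s)v$, so that $w_0 = \pi(s)v$ and $w_n = F$, Proposition \ref{prop:semgrp-rep-curve}(b)(iv) applied inductively with initial vector $w_{k-1}$ and direction $y_{j_k} \in \fq$ yields
\[
w_k(t_1,\dots,t_k) = e^{t_k \cL_{y_{j_k}}^\pi} w_{k-1}(t_1,\dots,t_{k-1})
\]
for sufficiently small $t_k$.

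The next step is to differentiate $F$ at $0$, starting with the outermost variable. By Proposition \ref{prop:semgrp-rep-curve}(b)(iii), $\partial_{t_n}|_0 F = \cL_{y_{j_n}}^\pi w_{n-1}(t_1,\dots,t_{n-1})$, which is still smooth in the remaining variables because $F$ is jointly smooth. To differentiate again in $t_{n-1}$ at $0$, I exploit the closedness of $\cL_{y_{j_n}}^\pi$ (Proposition \ref{prop:iker-geom-fro}): applying the operator to the difference quotient of $w_{n-1}$ in $t_{n-1}$ and passing to the limit moves the derivative through $\cL_{y_{j_n}}^\pi$, and a further use of part (iii) for $w_{n-1}$ then produces $\partial_{t_{n-1}}|_0 \partial_{t_n}|_0 F = \cL_{y_{j_n}}^\pi \cL_{y_{j_{n-1}}}^\pi w_{n-2}$. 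Iterating this closedness-based interchange through all $n$ derivatives, I conclude that $\pi(s)v \in \cD(\cL_{y_{j_n}}^\pi \cdots \cL_{y_{j_1}}^\pi)$.

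Since $\partial\pi_1^c(iy) = i\cL_y^\pi$ for $y \in \fq$ (Remark \ref{rem:rkhs-ident-noncyclic}), the vector $\pi(s)v$ lies in every iterated domain of operators drawn from $\partial\pi_1^c(i\fq)$. Because $i\fq$ Lie-generates $\g_1^c = [\fq,\fq] \oplus i\fq$, the standard characterization of $C^\infty$-vectors of a strongly continuous unitary representation of a Lie group as the intersection of iterated domains along a Lie-generating subset (cf.\ the discussion in \cite{Ne10b}) then implies that $\pi(s)v$ is a smooth vector of $\pi_1^c$. The main obstacle lies in the closedness-based interchange in the inductive step; this is precisely where the hypothesis that $v$ be a smooth vector of $\pi$ enters, since it is what makes $F$ jointly smooth and thereby allows the closedness of $\cL_y^\pi$ to deliver the desired iterated-domain property.
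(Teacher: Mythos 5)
Your argument is correct and follows essentially the same route as the paper: coordinates of the second kind to keep the products of exponentials in \(S\), joint smoothness of the orbit map of the smooth vector \(v\), Proposition \ref{prop:semgrp-rep-curve} for the first derivative, closedness of \(\cL_y^\pi\) to interchange the remaining derivatives with the already-extracted operators, and finally the characterization of smooth vectors via iterated domains along the Lie-generating set \(i\fq\) (the paper's Proposition \ref{prop:smoothvec-generators}). The only differences are notational (order in which the exponential factors are peeled off).
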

\begin{proof}
  Recall that, for \(x \in \fq\), we have \(\p\pi_1^c(ix) = i\cL_x^\pi\) (cf.\ Remark \ref{rem:rkhs-ident-noncyclic}).

  Let \(v \in \cH\) such that the orbit map \(\pi^v : S \rightarrow \cH\) is smooth. Let \(B = \{x_1,\ldots,x_\ell\} \subset \fq\) be a basis of \(\fq\). Using coordinates of the second kind, we can find, for all \(s \in S\) and all \(x_{j_1},\ldots,{x_{j_n}} \in B\), an \(\varepsilon > 0\) such that
  \[\exp(t_1 x_{j_1})\ldots\exp(t_n x_{j_n})s \in S, \quad \text{for } |t_1| < \varepsilon,\ldots,\, |t_n| < \varepsilon.\]
  For \(n \in \N\), we prove by induction over \(n\) that \(\pi(s)v \in \cD(\cL^\pi_{x_{j_1}},\ldots,\cL^\pi_{x_{j_n}})\) and
  \begin{equation}
    \label{eq:semigrp-grp-smoothvec}
    \frac{\partial^n}{\partial t_n \ldots \partial t_1}\Big\lvert_{t_1 = \ldots = t_n = 0} \pi(\exp(t_1 x_{j_1})\ldots \exp(t_n x_{j_n})s)v = \cL^\pi_{x_{j_1}}\ldots\cL^\pi_{x_{j_n}}\pi(s)v 
  \end{equation}
  for all \(x_{j_1},\ldots,x_{j_n} \in B\) and all \(s \in S\).

  For \(n = 1\), this follows from Proposition \ref{prop:semgrp-rep-curve}(a) and \((\cL^\pi_x\lvert_{\cH^0})^* = \cL^\pi_x\) (cf.\ Remark \ref{rem:rkhs-ident-noncyclic}). For \(n > 1\), consider as above the map
  \[\eta : (-\varepsilon,\varepsilon)^n \rightarrow \cH, \quad \eta(t_1,\ldots,t_n) := \pi(\exp(t_1 x_{j_1})\ldots\exp(t_n x_{j_n})s)v,\]
  for some \(\varepsilon > 0\). Since \(v\) is a smooth vector for \(\pi\), the map \(\eta\) is smooth. In particular, the map
  \[\tilde \eta : (-\varepsilon,\varepsilon) \rightarrow \cH, \quad \tilde \eta (t) := \frac{\partial^{n-1}}{\partial t_{n-1}\ldots\partial t_1}\Big\lvert_{t_{n-1} = \ldots = t_1 = 0} \eta(t_1,\ldots,t_{n-1},t)\]
    is differentiable with
    \[\bigderat{0}\tilde\eta(t) \overset{\eqref{eq:semigrp-grp-smoothvec}}{=} \bigderat{0} \cL^\pi_{x_{j_1}}\ldots\cL^\pi_{x_{j_{n-1}}}\pi(\exp(tx_{j_n})s)v\]
    by induction. Since this limit and the limit \(\derat{0} \cL^\pi_{x_{j_2}}\ldots\cL^\pi_{x_{j_{n-1}}}\pi(\exp(tx_{j_n})s)v\) both exist, the closedness of \(\cL^\pi_{x_{j_1}}\) and the induction hypothesis imply that
    \begin{align*}
      \bigderat{0}\tilde\eta(t) &= \cL^\pi_{x_{j_1}}\bigderat{0}\cL^\pi_{x_{j_2}}\ldots\cL^\pi_{x_{j_{n-1}}}\pi(\exp(tx_{j_n})s)v \\
                             &= \cL^\pi_{x_{j_1}}\frac{\partial^{n-1}}{\partial t\,\partial t_{n-1}\ldots\partial t_2}\Big\lvert_{t=t_{n-1}=\ldots=t_2=0} \pi(\exp(t_2 x_{j_2})\ldots\exp(t_{n-1} x_{j_{n-1}})\exp(t x_{j_n})s)v \\
                             &= \cL^\pi_{x_{j_1}}\ldots\cL^\pi_{x_{j_n}}\pi(s)v.
  \end{align*}
  This proves \eqref{eq:semigrp-grp-smoothvec}. Hence, we have
  \[\pi(s)v \in \bigcap_{n \in \N,\,x_{j_k} \in B} \cD(\partial \pi_1^c(ix_{j_1})\ldots \partial \pi_1^c(ix_{j_n})) \quad \text{for all } s \in S.\]
  Since \(iB\) generates \(\g_1^c\) in the sense of Lie algebras, the claim now follows from Proposition \ref{prop:smoothvec-generators}.
\end{proof}

\begin{thm}
  \label{thm:semigrp-rep-int}
  Let \((\pi,\cH)\) be a strongly continuous non-degenerate \(*\)-representation of \(S\) and let \((\pi_1^c,\cH)\) be the analytic continuation of \(\pi\) to \(G_1^c\) {\rm(Theorem \ref{thm:si-integration})}. Then the following holds:
  \begin{enumerate}
    \item For \(y \in \fq\) and \(s \in S\) with \(\exp(ty)s \in S\) for \(|t| < \varepsilon\), we have
      \begin{equation}
        \label{eq:thm-int-analytic-curve}
        \pi(\exp(ty)s) = e^{-it \p\pi_1^c(iy)}\pi(s) \quad \text{for } |t| < \varepsilon.
      \end{equation}
      The curve \eqref{eq:thm-int-analytic-curve} is analytic with respect to \(t\) as a \(B(\cH)\)-valued curve.
    \item For \(x \in [\fq,\fq] \subset \fh\) and \(s \in S\) with \(\exp(tx)s\) for \(|t| < \varepsilon\), we have
      \begin{equation}
        \label{eq:thm-int-commut-curve}
        \pi(\exp(tx)s) = \pi_1^c(\exp(tx))\pi(s) \quad \text{for } |t| < \varepsilon.
      \end{equation}
  \end{enumerate}
\end{thm}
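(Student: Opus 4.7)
Part (a) is essentially already encoded in Proposition~\ref{prop:semgrp-rep-curve}(b)(iv). Applying that result with $x = y \in \fq$ gives $\pi(s)v \in \cD(e^{t\cL_y^\pi})$ and $\pi(\exp(ty)s)v = e^{t\cL_y^\pi}\pi(s)v$ for all $v \in \cH$ and $|t|<\varepsilon$; the identification $\partial\pi_1^c(iy) = i\cL_y^\pi$ from Remark~\ref{rem:rkhs-ident-noncyclic} converts this, via functional calculus for the selfadjoint $\cL_y^\pi$, into the asserted operator identity since $e^{-it(i\cL_y^\pi)} = e^{t\cL_y^\pi}$. The analyticity of $t \mapsto \pi(\exp(ty)s)$ as a $B(\cH)$-valued curve I would deduce from local boundedness of the operator norm (from strong continuity of $\pi$ and the uniform boundedness principle applied to a compact subsegment of $\{\exp(ty)s : |t|<\varepsilon\}$) combined with weak analyticity: for fixed $u,v \in \cH$ and $w := \pi(s)v$, the scalar function $\langle u, e^{t\cL_y^\pi}w\rangle = \int_\R e^{t\lambda}\, d\mu_{u,w}(\lambda)$ extends to a holomorphic function on the strip $|\Re t|<\varepsilon$, because $w \in \cD(e^{s\cL_y^\pi})$ for $|s|<\varepsilon$ secures absolute convergence of the integral (via Cauchy--Schwarz against the spectral measure $\mu_{w,w}$). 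Weak analyticity plus local norm-boundedness yields norm analyticity by Dunford's theorem.

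For part (b), $x \in [\fq,\fq]\subset \fh$ is not in $\fq$, so Proposition~\ref{prop:semgrp-rep-curve}(b) is unavailable; only part~(a) of that proposition applies, and only once differentiability of $\gamma(t) := \pi(\exp(tx)s)v$ is in hand. The plan is to prove \eqref{eq:thm-int-commut-curve} on the dense subspace of smooth vectors for $\pi$ and extend by boundedness. Fix $v = \pi(f)w$ for some $f \in C_c^\infty(S)$ and $w \in \cH$; by Lemma~\ref{lem:semigrp-smooth-vec} such $v$ is smooth for $\pi$, and by non-degeneracy of $\pi$ these vectors span a dense subspace of $\cH$. Then $\gamma$ is smooth, and Proposition~\ref{prop:semgrp-rep-curve}(a), using $\L(\tau)(x) = x$ for $x \in \fh$, yields $\gamma'(t) = -(\cL_x^\pi|_{\cH^0})^*\gamma(t)$. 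Simultaneously, Proposition~\ref{prop:semigrp-grp-smoothvec} ensures that each $\gamma(t)$ is a smooth vector of $\pi_1^c$, and in particular $\gamma(t) \in \cD(\partial\pi_1^c(x))$. The chain of inclusions in Remark~\ref{rem:si-integration} gives $\partial\pi_1^c(x) \subset -(\cL_x^\pi|_{\cH^0})^*$, so the two operators agree on $\gamma(t)$, and I obtain the clean ODE
\[
\gamma'(t) = \partial\pi_1^c(x)\gamma(t), \qquad \gamma(0) = \pi(s)v.
\]

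The final step is a standard uniqueness argument: setting $U_t := \pi_1^c(\exp(tx))$, a strongly continuous unitary group with skew-adjoint generator $\partial\pi_1^c(x)$, one verifies by the product rule that $t \mapsto U_{-t}\gamma(t)$ is differentiable with derivative $-\partial\pi_1^c(x)U_{-t}\gamma(t) + U_{-t}\partial\pi_1^c(x)\gamma(t) = 0$, using that $U_{-t}$ commutes with $\partial\pi_1^c(x)$ on the latter's domain (both being functions of the same skew-adjoint operator in the spectral sense). Hence $U_{-t}\gamma(t) \equiv \pi(s)v$, i.e.\ $\pi(\exp(tx)s)v = \pi_1^c(\exp(tx))\pi(s)v$ on the dense set of smooth vectors, and boundedness of both sides extends this to all of $\cH$. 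The main obstacle I expect is precisely the domain-matching in part~(b): Proposition~\ref{prop:semgrp-rep-curve}(a) only produces an ODE with right-hand side the a priori much larger operator $-(\cL_x^\pi|_{\cH^0})^*$, and one must leverage the smooth-vector result Proposition~\ref{prop:semigrp-grp-smoothvec} to push $\gamma(t)$ into $\cD(\partial\pi_1^c(x))$ so that the ODE is recognisable as the one generating the unitary one-parameter group $\pi_1^c(\exp(tx))$.
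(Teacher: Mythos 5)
Your proposal is correct and follows essentially the same route as the paper: part (a) via Proposition \ref{prop:semgrp-rep-curve}(b)(iv) and the identification $\partial\pi_1^c(iy)=i\cL_y^\pi$, with analyticity obtained from weak holomorphy of $z\mapsto e^{z\cL_y^\pi}\pi(s)$ (the paper packages this as Lemma \ref{lem:holom-exp-prod}); part (b) by combining Proposition \ref{prop:semgrp-rep-curve}(a), Proposition \ref{prop:semigrp-grp-smoothvec}, and Remark \ref{rem:si-integration} to land in $\cD(\partial\pi_1^c(x))$, then the standard ODE uniqueness argument on the dense subspace $\cH^0$. Your explicit differentiation of $t\mapsto U_{-t}\gamma(t)$ is just the unwound form of the paper's appeal to Stone's theorem.
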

\begin{proof}
  (a) Let \(y \in \fq\), \(s \in S\) and \(\varepsilon > 0\) such that \(\exp(ty)s \in S\) for \(|t| < \varepsilon\). Consider the curve
  \[F : (-\varepsilon,\varepsilon) \rightarrow B(\cH), \quad F(t) := \pi(\exp(ty)s).\]
  Recall from Remark \ref{rem:rkhs-ident-noncyclic} that \(\partial\pi_1^c(iy) = i\cL_y^\pi\). Then Proposition \ref{prop:semgrp-rep-curve} shows that
  \[\pi(s)\cH \subset \cD(e^{t\cL_y^\pi}) \quad \text{and} \quad F(t) = e^{t\cL_y^\pi}\pi(s) = e^{-it\partial\pi_1^c(iy)}\pi(s), \quad |t| < \varepsilon.\]
  This proves \eqref{eq:thm-int-analytic-curve}. The analyticity of \(F\) follows from Lemma \ref{lem:holom-exp-prod}.

  (b) Let \(x \in [\fq,\fq],\, s \in S,\) and \(\varepsilon > 0\) such that \(\exp(tx)s \in S\) for \(|t| < \varepsilon\). We show that \eqref{eq:thm-int-commut-curve} holds on the dense subspace \(\cH^0 = \spann \{\pi(f)\cH : f \in C_c^\infty(S)\}\) (cf.\ Remark \ref{rem:rkhs-ident-noncyclic}). Therefore, let \(v \in \cH^0\). Consider the curve
  \[\gamma: (-\varepsilon, \varepsilon) \rightarrow \cH, \quad \gamma(t) := \pi(\exp(tx)s)v.\]
  By Lemma \ref{lem:semigrp-smooth-vec}, \(v\) is a smooth vector of \((\pi,\cH)\). Thus, by Proposition \ref{prop:semgrp-rep-curve}, \(\gamma'(t) = (\cL^\pi_{-x}\lvert_{\cH^0})^*\gamma(t)\). Proposition \ref{prop:semigrp-grp-smoothvec} implies that \(\gamma((-\varepsilon,\varepsilon))\) consists of smooth vectors of \((\pi_1^c,\cH)\). In particular, we have \(\gamma(-\varepsilon,\varepsilon) \subset \cD(\partial \pi_1^c(x))\). By Remark \ref{rem:si-integration}, \(-(\cL^\pi_x\lvert_{\cH^0})^*\) is an extension of \(\partial \pi_1^c(x)\). Hence, we have \(\gamma'(t) = \partial \pi_1^c(x)\gamma(t)\).

  On the other hand, Proposition \ref{prop:semigrp-grp-smoothvec} implies that the curve
  \[\eta : \R \rightarrow \cH, \quad t \mapsto \pi_1^c(\exp(tx))\pi(s)v,\]
  is differentiable with \(\eta(0) = \pi(s)v\) and \(\eta'(t) = \partial \pi_1^c(x)\eta(t)\). Since \(\gamma\) and \(\eta\) are both solutions to the same initial value problem on \((-\varepsilon,\varepsilon)\), we have \(\gamma(t) = \eta(t)\) for \(|t| < \varepsilon\) by Stone's Theorem. This implies \eqref{eq:thm-int-commut-curve}.
\end{proof}

\subsection{Local representations}

Our goal in this section is to extend the representation obtained in Theorem \ref{thm:si-integration} to a unitary representation of the 1-connected Lie group \(G^c\) with Lie algebra \(\g^c = \fh \oplus \fq\).
As the Lie subalgebra \(\g_1^c = [\fq,\fq] \oplus i\fq\) is an ideal in \(\g\), the integral subgroup \(\la \exp_{G^c}(\g_1^c)\ra\) of \(G^c\) with Lie algebra \(\g_1^c\) is normal, which implies that it is 1-connected (cf.\ \cite[Ch.\ XII, Thm.\ 1.2]{Ho65}). Therefore, we can identify \(G_1^c\) with a closed subgroup of \(G^c\).

Let \(H := G_0^\tau\). For \(h \in H\) and \(B \subset H\), we define
\[S_h = \{s \in S : hs \in S\} \quad \text{and} \quad S_B = \bigcap_{h \in B} S_h.\]
Note that \(S_h\) is open in \(G\) and that \(S_BS \subset S_B\).

Throughout this section, let \((\pi,\cH)\) be a strongly continuous non-degenerate \(*\)-representation of \(S\). For a subset \(N \subset \cH\), we say that \(N\) is \emph{total in \(\cH\)} if \(\spann \,N\) is dense in \(\cH\).

\begin{lemma}
  \label{lem:localrep-H-op}
  Suppose that for \(h \in H\) the subsets \(\pi(S_h)\cH\) and \(\pi(hS_h)\cH\) are total in \(\cH\).
  Then there exists a unique unitary operator \(\pi^H(h) \in \U(\cH)\) such that
  \begin{equation}
    \label{eq:localrep-H-op}
    \pi(hs) = \pi^H(h)\pi(s) \quad \text{for all } s \in S_h.
  \end{equation}
\end{lemma}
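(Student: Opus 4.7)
The plan is to construct $\pi^H(h)$ by declaring $\pi^H(h)\pi(s)v := \pi(hs)v$ for $s \in S_h$ and $v \in \cH$, extending linearly, and then verifying isometry, well-definedness, and surjectivity.

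Uniqueness is immediate: any two operators satisfying \eqref{eq:localrep-H-op} agree on $\pi(S_h)\cH$, which spans a dense subspace by hypothesis, so they coincide on all of $\cH$.

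For existence, the crucial computation is to compare $\langle \pi(hs_1)v_1, \pi(hs_2)v_2\rangle$ with $\langle \pi(s_1)v_1, \pi(s_2)v_2\rangle$ for $s_1, s_2 \in S_h$ and $v_1, v_2 \in \cH$. Using that $h \in H = G^\tau_0$ satisfies $\tau(h) = h$, hence $h^* = \tau(h)^{-1} = h^{-1}$, and that $\pi$ is a $*$-representation with $\pi(a)^*\pi(b) = \pi(a^*b)$ whenever $a, b \in S$ (all relevant products land in $S$ because $s_i \in S$, $hs_i \in S$, and $S$ is a $*$-subsemigroup), I compute
\[
\langle \pi(hs_1)v_1, \pi(hs_2)v_2\rangle = \langle v_1, \pi((hs_1)^*(hs_2))v_2\rangle = \langle v_1, \pi(s_1^* h^{-1} h s_2)v_2\rangle = \langle v_1, \pi(s_1^*s_2)v_2\rangle,
\]
which equals $\langle \pi(s_1)v_1, \pi(s_2)v_2\rangle$ by the same identity. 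Thus for any finite sum,
\[
\Big\|\sum_i \pi(hs_i)v_i\Big\|^2 = \sum_{i,j}\langle \pi(hs_i)v_i, \pi(hs_j)v_j\rangle = \sum_{i,j}\langle \pi(s_i)v_i, \pi(s_j)v_j\rangle = \Big\|\sum_i \pi(s_i)v_i\Big\|^2.
\]
This identity simultaneously yields well-definedness (if $\sum_i \pi(s_i)v_i = 0$, then $\sum_i \pi(hs_i)v_i = 0$) and isometry of the map on the dense subspace $\mathrm{span}\,\pi(S_h)\cH$.

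By continuity, the map extends to an isometry $\pi^H(h) \in B(\cH)$. Its range contains $\pi(hS_h)\cH$, which is total by the second hypothesis, so the range is dense. An isometry with dense range on a Hilbert space is unitary, completing the construction. I do not anticipate a significant obstacle: the only subtle point is ensuring all products of the form $(hs_1)^*(hs_2)$ and $s_1^*s_2$ genuinely belong to $S$ so that the semigroup homomorphism identities can be applied, which follows directly from $S$ being a $*$-subsemigroup together with $s_i \in S_h$ meaning both $s_i$ and $hs_i$ lie in $S$.
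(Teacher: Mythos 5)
Your proposal is correct and follows essentially the same route as the paper: both verify that $\langle \pi(hs_1)v_1,\pi(hs_2)v_2\rangle = \langle \pi(s_1)v_1,\pi(s_2)v_2\rangle$ using $h^*=h^{-1}$ and the $*$-semigroup structure, deduce a well-defined isometry $\mathrm{span}\,\pi(S_h)\cH \to \mathrm{span}\,\pi(hS_h)\cH$, and extend it to a unitary via the two totality hypotheses. Your write-up is merely more explicit about well-definedness and the dense-range argument; there is no gap.
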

\begin{proof}
  For any \(s,t \in S_h\) and \(v,w \in \cH\), we have
  \[\la \pi(hs)v, \pi(ht) w \ra = \la \pi((ht)^*hs)v, w \ra = \la \pi(t)^*\pi(s)v, w \ra = \la \pi(s)v, \pi(t)w \ra.\]
  Hence, we obtain a linear isometry
  \[\spann(\pi(S_h)\cH) \rightarrow \spann(\pi(hS_h)\cH), \quad \sum_{i=1}^n \pi(s_i)v_i \mapsto \sum_{i=1}^n \pi(hs_i)v_i,\]
  which extends to a unitary operator \(\pi^H(h)\). Since \(\pi(S_h)\cH\) is total in \(\cH\), the operator \(\pi^H(h)\) is uniquely determined by \eqref{eq:localrep-H-op}.
\end{proof}

\begin{definition}
  \label{def:local-h-comp}
  Let \((G,\tau)\) be a symmetric Lie group and \(S \subset G\) be an open \(*\)-subsemigroup. Let \(H\) be the integral subgroup of \(G\) with Lie algebra \(\fh\). A strongly continuous non-degenerate \(*\)-representation \((\pi,\cH)\) of \(S\) is called \emph{locally \(H\)-compatible} if there exists a symmetric open \(\1\)-neighborhood \(V \subset H\) such that \(\pi(S_V)\cH\) is total in \(\cH\).
\end{definition}

The uniqueness of the unitary operators which were constructed in Lemma \ref{lem:localrep-H-op} implies that locally \(H\)-compatible representations yield ``local'' representations of \(H\) on \(\cH\):

\begin{prop}
  \label{prop:localrep-H-rep}
  Suppose that \((\pi,\cH)\) is locally \(H\)-compatible and let \(V \subset H\) be a symmetric open \(\1\)-neighborhood such that \(\pi(S_V)\cH\) is total in \(\cH\). Then the map
  \[\pi^H: V \rightarrow \U(\cH), \quad h \mapsto \pi^H(h),\]
  is strongly continuous and satisfies
  \[\pi^H(g)\pi^H(h) = \pi^H(gh), \quad \pi^H(g)^* = \pi^H(g^{-1}), \quad \text{for all } g,h \in V \text{ with } gh \in V\]
  and
  \[\pi(hs) = \pi^H(h)\pi(s) \quad \text{for all } s \in S_h,\,h \in V.\]
\end{prop}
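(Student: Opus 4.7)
The plan is to first verify that Lemma \ref{lem:localrep-H-op} actually applies to every $h \in V$, then establish the algebraic identities on a dense set, and finally bootstrap strong continuity from the identity element using the local homomorphism property.

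\textbf{Step 1 (Well-definedness of $\pi^H(h)$).} For any $h \in V$, I would first check that $\pi(S_h)\cH$ and $\pi(hS_h)\cH$ are total in $\cH$. Since $h \in V$, the definition of $S_V$ gives $S_V \subset S_h$, so $\pi(S_V)\cH \subset \pi(S_h)\cH$ is total. For $\pi(hS_h)\cH$, I use that $V$ is symmetric: for $s \in S_V$, the element $h^{-1}s$ lies in $S$ (since $h^{-1} \in V$), whence $h^{-1}s \in S_h$ and $s = h(h^{-1}s) \in hS_h$. Thus $\pi(S_V)\cH \subset \pi(hS_h)\cH$ is total, and Lemma \ref{lem:localrep-H-op} produces a unique unitary $\pi^H(h)$ satisfying the intertwining relation displayed in the proposition. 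This gives the last displayed formula for free.

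\textbf{Step 2 (Multiplicativity).} Let $g,h \in V$ with $gh \in V$. For any $s \in S_V$ we have $s \in S_h$, $hs \in S_g$ (because $g(hs) = (gh)s \in S$ since $gh \in V$), and $s \in S_{gh}$. Applying the defining relation three times,
\[
\pi^H(g)\pi^H(h)\pi(s) = \pi^H(g)\pi(hs) = \pi((gh)s) = \pi^H(gh)\pi(s).
\]
Because $\pi(S_V)\cH$ is total and both sides are bounded operators, this gives $\pi^H(g)\pi^H(h) = \pi^H(gh)$.

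\textbf{Step 3 (Involution).} Since $\mathbf{1} \in V$ and the relation $\pi(s) = \pi^H(\mathbf{1})\pi(s)$ holds on all of $S_{\mathbf{1}} = S$, uniqueness in Lemma \ref{lem:localrep-H-op} forces $\pi^H(\mathbf{1}) = I$. For $g \in V$, symmetry of $V$ gives $g^{-1} \in V$ and $gg^{-1} = \mathbf{1} \in V$, so Step 2 yields $\pi^H(g)\pi^H(g^{-1}) = I$. Combined with unitarity of $\pi^H(g)$ this gives $\pi^H(g)^* = \pi^H(g)^{-1} = \pi^H(g^{-1})$.

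\textbf{Step 4 (Strong continuity).} The key point is continuity at $\mathbf{1}$. Since $\|\pi^H(h)\| = 1$, a uniform boundedness argument reduces this to continuity on the total set $\pi(S_V)\cH$. For $v = \pi(s_0)w$ with $s_0 \in S_V$ and $h \in V$, the intertwining relation gives $\pi^H(h)v = \pi(hs_0)w$; since $h \mapsto hs_0$ is continuous into $S$ and $\pi$ is strongly continuous on $S$, we obtain $\pi^H(h)v \to v$ as $h \to \mathbf{1}$ inside $V$. For an arbitrary $h_0 \in V$, pick a neighborhood $U$ of $h_0$ in $V$ with $Uh_0^{-1}$ contained in $V$; for $h \in U$, Step 2 (applied with the triple $hh_0^{-1},\, h_0,\, h$) gives $\pi^H(h) = \pi^H(hh_0^{-1})\pi^H(h_0)$, which tends strongly to $\pi^H(h_0)$ as $h \to h_0$ by the continuity at $\mathbf{1}$ applied to the vector $\pi^H(h_0)v$.

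The main delicacy is the bookkeeping of which products actually lie in $V$: the relations in Steps 2--4 hold only locally, and each time one wishes to factor $\pi^H(h) = \pi^H(hh_0^{-1})\pi^H(h_0)$ one must shrink the neighborhood of $h_0$ so that all three factors stay inside $V$. Once this is handled, each step is essentially formal.
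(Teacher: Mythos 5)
Your proposal is correct and follows essentially the same route as the paper: verify the hypotheses of Lemma \ref{lem:localrep-H-op} for each \(h \in V\) using the symmetry of \(V\) (via \(S_V \subset S_h\) and \(S_V \subset S_{h^{-1}} = hS_h\)), derive the algebraic identities from the defining relation on the total set \(\pi(S_V)\cH\) together with uniqueness, and obtain strong continuity from \(\pi^H(h)\pi(s)w = \pi(hs)w\) and the strong continuity of \(\pi\). The only (harmless) difference is that your Step 4 bootstraps continuity from the identity via the local homomorphism property, whereas the direct argument \(h \mapsto \pi(hs_0)w\) is already continuous at every point of \(V\), which together with \(\|\pi^H(h)\|=1\) suffices.
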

\begin{proof}
  Since \(V\) is symmetric, we have for each \(h \in V\) that \(S_V \subset S_h\) and \(S_V \subset S_{h^{-1}} = hS_h\). Hence, the premises of Lemma \ref{lem:localrep-H-op} are satisfied for each \(h \in V\), so that we obtain a map \(\pi^{H} : V \rightarrow \U(\cH)\). The map \(\pi^H\) is strongly continuous on \(\pi(S_V)\cH\) because of \eqref{eq:localrep-H-op} and the strong continuity of \((\pi,\cH)\). The other properties follow from the uniqueness of \(\pi^H\).
\end{proof}

\begin{examples}(Sufficient conditions for local \(H\)-compatibility)
  \label{ex:local-h-comp}
  \begin{inlinelist}
  \item If the semigroup \(S\) satisfies \(HS = S\), then every strongly continuous non-degenerate \(*\)-representation of \(S\) is locally \(H\)-compatible because \(\pi(S)\cH\) is total in \(\cH\).
    In \cite{MNO15}, it is shown that non-degenerate strongly continuous representations of such semigroups always lead to analytic continuations to \(G^c\).\\

  \item Suppose that there exists a compact set \(C \subset S\) such that \(\pi(C)\cH\) is total in \(\cH\). Then there exists a symmetric open \(\1\)-neighborhood \(B \subset H\) such that \(BC \subset S\), i.e.\ \(C \subset S_B\). Thus, \(\pi\) is locally \(H\)-compatible.\\

  \item Let \(x \in \fq\) be such that \(\exp(tx) \in S\) for all \(t > 0\). Then \(C := \{\exp(x)\}\) satisfies the conditions of (2), i.e.\ \(\pi(\exp(x))\cH\) is dense in \(\cH\) (cf.\ Corollary \ref{cor:semigrp-rep-liecone}).
  \end{inlinelist}
\end{examples}

Example \ref{ex:local-h-comp}(2) suggests that one way to show that a representation \((\pi,\cH)\) of \(S\) is locally \(H\)-compatible is to prove that the subset
\[S_\reg^\pi := \{s \in S : \pi(s)\cH \text{ is dense in \(\cH\) and } \pi(s) \text{ is injective}\}\]
is non-empty. The following Lemma suggests that this property is natural:

\begin{lem}
  The subset \(S_\reg^\pi \subset S\) is an open \(*\)-subsemigroup of \(S\).
\end{lem}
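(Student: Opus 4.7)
The lemma makes three assertions about $S_\reg^\pi$: invariance under $s \mapsto s^*$, closure under multiplication, and openness in $S$. The first two are formal. For $*$-invariance, one uses that for any bounded operator $T$ on $\cH$, $\ker T^* = (\overline{\im T})^\perp$ and $\overline{\im T^*} = (\ker T)^\perp$, so $T$ is injective with dense range if and only if $T^*$ is; combined with $\pi(s^*) = \pi(s)^*$ this gives $S_\reg^\pi = (S_\reg^\pi)^*$. For multiplicative closure, given $s, t \in S_\reg^\pi$ the composition $\pi(st) = \pi(s)\pi(t)$ is injective as a composition of injective bounded operators, and $\overline{\pi(s)\pi(t)\cH} = \overline{\pi(s)\overline{\pi(t)\cH}} = \overline{\pi(s)\cH} = \cH$ by boundedness of $\pi(s)$.

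Openness is the main content. If $S_\reg^\pi$ is empty there is nothing to prove, so fix $s_0 \in S_\reg^\pi$. Then $\pi(s_0)\cH$ is dense in $\cH$ (hence total) and $\{s_0\}$ is compact, so Example \ref{ex:local-h-comp}(2) shows that $(\pi, \cH)$ is locally $H$-compatible, and Proposition \ref{prop:localrep-H-rep} then provides a symmetric open $\1$-neighborhood $V \subset H$ together with a strongly continuous map $\pi^H : V \to \U(\cH)$ satisfying $\pi(h s') = \pi^H(h)\pi(s')$ for all $h \in V$ and $s' \in S_h$.

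Now for an arbitrary $s \in S_\reg^\pi$, I would use coordinates of the second kind to find a $\1$-neighborhood $W \subset G$ in which each element factors uniquely as $u = h\exp(y)$ with $h \in V$ and $y \in \fq$ small, arranged so that $us, \exp(y) s \in S$. Theorem \ref{thm:semigrp-rep-int}(a) gives $\pi(\exp(y)s) = e^{\cL_y^\pi}\pi(s)$, whence $\pi(us) = \pi^H(h)\, e^{\cL_y^\pi}\pi(s)$, a product of a unitary, the exponential of a selfadjoint operator (injective with dense range in the appropriate sense), and $\pi(s)$. Injectivity of this product is immediate: $\pi^H(h)$ is unitary, $e^{\cL_y^\pi}$ is injective by spectral theory, and $\pi(s)$ is injective by assumption. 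The delicate point is density of the range of $\pi(us)$, equivalent to injectivity of $\pi((us)^*)$: this is straightforward on the dense subspace $\pi(s)\cH \subset \cD(e^{\cL_y^\pi})$, and the extension to all of $\cH$ should follow by combining the $*$-closure already shown with the adjoint form of Theorem \ref{thm:semigrp-rep-int}(a), namely $\pi(s'\exp(y)) = \pi(s')\, e^{\cL_y^\pi}$ for $s' \in S$ with $s'\exp(y) \in S$. Granting this last verification, $Ws \subseteq S_\reg^\pi$ and openness follows.
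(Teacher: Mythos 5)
Your verification of $*$-invariance and of closure under multiplication is correct and agrees with the paper's proof. The openness argument, however, has a genuine gap exactly where you flag it, and the repair you sketch does not close it. Writing $\pi(us)=\pi^H(h)e^{\cL_y^\pi}\pi(s)$, injectivity is indeed immediate, but density of the range of $\pi(us)$ is, by definition, \emph{equivalent} to injectivity of the bounded adjoint $\pi(us)^*=\pi((us)^*)$, which (after cancelling the unitary $\pi^H(h)$) is injectivity of $\pi(s^*\exp(y))$. The adjoint identity you invoke only gives $\pi(s^*\exp(y))\supseteq \pi(s)^*e^{\cL_y^\pi}$, i.e.\ injectivity of this bounded operator on the dense subspace $\cD(e^{\cL_y^\pi})$; a bounded operator that is injective on a dense subspace can perfectly well have a nontrivial closed kernel (the kernel need not meet that dense subspace). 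So ``the extension to all of $\cH$'' is precisely the missing content, and nothing in Theorem \ref{thm:semigrp-rep-int} supplies it.

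The paper proves openness by a purely algebraic divisibility argument that bypasses all of the analytic machinery: for $s\in S$ the sets $U_L(s):=S\cap sS^{-1}$, $U_R(s):=S\cap S^{-1}s$ and $U(s):=U_L(s)\cap U_R(s)$ are open; if $s'\in U_L(s)$ then $s's''=s$ for some $s''\in S$, so $\im\pi(s)\subset\im\pi(s')$ and $\pi(s')$ inherits dense range from $\pi(s)$, while dually $s'\in U_R(s)$ forces $\ker\pi(s')\subset\ker\pi(s)$; hence $U(s)\subset S_\reg^\pi$ for $s\in S_\reg^\pi$, and $s\in U(s^2)$ exhibits an open neighborhood of $s$ inside $S_\reg^\pi$. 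Note that this same trick is the natural way to close your gap (for small $y$ one has $\exp(y)s\in U_L(s^2)$ because $(s^{-1}\exp(-y)s)\,s\in S$), and that it needs neither local $H$-compatibility nor the local factorization $u=h\exp(y)$, so I would recommend replacing your openness argument by it rather than trying to patch the analytic route.
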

\begin{proof}
  That \(S_\reg^\pi\) is \(*\)-invariant follows from the fact that, for every \(s \in S\), the operator \(\pi(s)\) has dense range if and only if \(\pi(s)^*\) is injective. Since the product of two bounded injective operators with dense range is again an injective operator with dense range, the subset \(S_\reg^\pi\) is a \(*\)-subsemigroup of \(S\).

  In order to show that \(S_\reg^\pi\) is open in \(S\), we define for every \(s \in S\) the open subsets
  \[U_L(s) := S \cap sS^{-1}, \quad U_R(s) := S \cap S^{-1}s, \quad \text{and}  \quad U(s) := U_L(s) \cap U_R(s).\]
  We claim that, for \(s \in S_\reg^\pi\), we have \(U(s) \subset S_\reg^\pi\): If \(s' \in U_L(s)\), then there exists a factorization \(s's'' = s\), where \(s'' \in S\). Hence, \(\pi(s) = \pi(s')\pi(s'')\) implies that \(\pi(s)\cH = \pi(s')\pi(s'')\cH \subset \pi(s')\cH\), i.e.\ the range of \(\pi(s')\) is dense in \(\cH\). With a similar argument, we see that \(\pi(s')\) is injective for every \(s' \in U_R(s)\). As a result, \(U(s)\) is contained in \(S_\reg^\pi\) if \(s \in S_\reg^\pi\).

  Now \(s \in U(s^2) \subset S_\reg^\pi\) for every \(s \in S_\reg^\pi\) shows that \(S_\reg^\pi\) is open in \(S\).
\end{proof}

\begin{rem}
  \label{rem:local-h-comp-unique}
  Let \((\pi,\cH)\) be a locally \(H\)-compatible representation of \(S\) and, for \(k=1,2\), let \(B_k \subset H\) be a symmetric open \(\1\)-neighborhood such that \(\pi(S_{B_k})\cH\) is total in \(\cH\). Let \(\pi_k^H\) be the corresponding maps we obtain from Proposition \ref{prop:localrep-H-rep} when applied to \(V = B_k\). Then, for \(s \in S_{B_1 \cap B_2}\) and \(h \in B_1 \cap B_2\), we have
  \[\pi_1^H(h)\pi(s) \overset{\eqref{eq:localrep-H-op}}{=} \pi(hs) \overset{\eqref{eq:localrep-H-op}}{=} \pi_2^H(h)\pi(s).\]
  Since \(S_{B_k} \subset S_{B_1 \cap B_2}\) for \(k=1,2\), the subset \(\pi(S_{B_1 \cap B_2})\cH\) is total in \(\cH\), so that \(\pi_1^H\) and \(\pi_2^H\) coincide on \(B_1 \cap B_2\). 
\end{rem}

\begin{prop}
  \label{prop:localrep-comm-rel}
  Suppose that \((\pi,\cH)\) is locally \(H\)-compatible {\rm(Definition \ref{def:local-h-comp})} and let \((\pi_1^c,\cH)\) be the analytic continuation of \(\pi\) to \(G_1^c\) {\rm(Theorem \ref{thm:si-integration})}. Let \(q_H: \tilde H \rightarrow H\) be a universal covering of \(H\). Then there exists a unique strongly continuous unitary representation \(\pi^{\tilde H} : \tilde H \rightarrow \U(\cH)\) with the following properties:
  \begin{enumerate}
    \item Let \(B \subset \tilde H\) be a symmetric open \(\1\)-neighborhood such that \(\pi(S_{q_H(B)})\cH\) is total in \(\cH\). Then
      \[\pi^{\tilde H}(h)\pi(s) = \pi(q_H(h)s)\]
      for all \(h \in B, s \in S_{q_H(B)} = \{s \in S : q_H(B)s \subset S\}\).
    \item \(\pi^{\tilde H}(h)\pi_1^c(g)\pi^{\tilde H}(h)^{-1} = \pi_1^c(\alpha_h(g))\) for all \(h \in \tilde H,\, g \in G_1^c\), where \(\alpha_h \in \Aut(G_1^c)\) with \(\L(\alpha_h) = \Ad(h)\). In particular, the closed convex cone \[W := \oline{\{x \in \g_1^c : \sup\spec(i\p\pi_1^c(x)) < \infty\}}\] is \(\Ad(G^c)\)-invariant.
    \item \(\pi^{\tilde H}(h)e^{i \p\pi_1^c(x)}\pi^{\tilde H}(h)^{-1} = e^{i \p\pi_1^c(\Ad(h)x)}\) for \(h \in \tilde H\) and \(x \in \g_1^c\).
  \end{enumerate}
\end{prop}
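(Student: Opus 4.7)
\emph{Proof proposal.} The plan is to build $\pi^{\tilde H}$ by lifting the local representation $\pi^H$ from Proposition \ref{prop:localrep-H-rep} through the covering $q_H$, globalizing via the monodromy principle for simply connected Lie groups, and then verifying the conjugation relations by combining the defining intertwining property of $\pi^H$ with the one-parameter identities of Theorem \ref{thm:semigrp-rep-int}.

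First, I would fix a symmetric open $\1$-neighborhood $V \subset H$ with $\pi(S_V)\cH$ total in $\cH$ and apply Proposition \ref{prop:localrep-H-rep} to obtain a strongly continuous local homomorphism $\pi^H \colon V \to \U(\cH)$. After shrinking $V$, I may assume it lifts diffeomorphically through $q_H$ to an open neighborhood $\tilde V \subset \tilde H$, and set $\tilde\pi(h) := \pi^H(q_H(h))$ for $h \in \tilde V$. Remark \ref{rem:local-h-comp-unique} ensures that the germ of $\tilde\pi$ at $\1$ is independent of $V$. Since $\tilde H$ is simply connected, the monodromy principle for local Lie group homomorphisms produces a unique strongly continuous unitary representation $\pi^{\tilde H} \colon \tilde H \to \U(\cH)$ extending $\tilde\pi$, and property (a) is immediate for $B \subset \tilde V$ and, by a density/continuity argument, for any $B$ of the type specified in the statement.

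To establish (b), fix $y \in \fq$, $s \in S_{q_H(B)}$, and $h \in B$, and choose $\varepsilon > 0$ small enough so that $\exp(ty)s$ and $q_H(h)\exp(ty)s$ both lie in $S$ for $|t| < \varepsilon$. Using $q_H(h)\exp(ty) = \exp(t\Ad(h)y)q_H(h)$ in $G$ together with (a) and Theorem \ref{thm:semigrp-rep-int}(a) applied twice, I compute
\begin{align*}
\pi^{\tilde H}(h)\,e^{-it\p\pi_1^c(iy)}\pi(s)
&= \pi^{\tilde H}(h)\pi(\exp(ty)s)
 = \pi(q_H(h)\exp(ty)s) \\
&= \pi(\exp(t\Ad(h)y)\,q_H(h)s)
 = e^{-it\p\pi_1^c(i\Ad(h)y)}\pi^{\tilde H}(h)\pi(s).
\end{align*}
Totality of $\pi(S_{q_H(B)})\cH$ then yields the operator identity
\[\pi^{\tilde H}(h)\,e^{-it\p\pi_1^c(iy)}\,\pi^{\tilde H}(h)^{-1} = e^{-it\p\pi_1^c(i\Ad(h)y)}\]
for $|t| < \varepsilon$, and hence for all $t \in \R$ by the group law of one-parameter unitary groups. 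Because $i\fq$ generates $\g_1^c$ as a Lie algebra and $B$ generates $\tilde H$ as a group, successive use of the Trotter product formula and this identity propagates the conjugation relation to all $g \in G_1^c$ and all $h \in \tilde H$, giving the automorphism $\alpha_h$ with $\L(\alpha_h) = \Ad(h)$. The cone $W$ is then invariant under $\Ad(\tilde H)$ by its spectral description and the unitarity of $\pi^{\tilde H}$, invariant under $\Ad(G_1^c)$ because $\pi_1^c$ is unitary, and therefore---since $H$ and $G_1^c$ together generate the connected group $G^c$---invariant under $\Ad(G^c)$.

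Part (c) then follows from (b) by functional calculus: differentiating the one-parameter conjugation identity in $t$ at $t=0$ on a common invariant core gives $\pi^{\tilde H}(h)\,\p\pi_1^c(x)\,\pi^{\tilde H}(h)^{-1} = \p\pi_1^c(\Ad(h)x)$ for $x \in \g_1^c$, and since $i\p\pi_1^c(x)$ is self-adjoint, the spectral theorem yields $\pi^{\tilde H}(h)\,e^{i\p\pi_1^c(x)}\,\pi^{\tilde H}(h)^{-1} = e^{i\p\pi_1^c(\Ad(h)x)}$. The main technical obstacle will be the monodromy step, namely confirming that the germ of $\tilde\pi$ globalizes to a strongly continuous unitary representation of $\tilde H$, together with the careful bootstrap from the local conjugation identity on $B \times i\fq$ to a global one on $\tilde H \times G_1^c$; the latter hinges on $B$ generating $\tilde H$ and on $i\fq$ generating $\g_1^c$ via Trotter-type approximations.
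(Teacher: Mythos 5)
Your overall architecture coincides with the paper's: part (a) via the local representation of Proposition \ref{prop:localrep-H-rep} and the Monodromy Principle, part (b) by first establishing the conjugation relation on the one-parameter groups coming from \(i\fq\) and then propagating (since \(i\fq\) generates \(\g_1^c\) and \(B\) generates \(\tilde H\)), and part (c) by Stone's theorem and spectral calculus. Parts (a) and (c) are fine as written.

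There is, however, a genuine gap in the middle of (b), precisely at the step that is the technical heart of the proposition. Your computation yields, for each \emph{fixed} \(s\) and \(v\), the vector identity \(\pi^{\tilde H}(h)e^{-it\p\pi_1^c(iy)}\pi(s)v = e^{-it\p\pi_1^c(i\Ad(h)y)}\pi(q_H(h)s)v\) only for \(|t|<\varepsilon\) with \(\varepsilon=\varepsilon(s)\) depending on \(s\). You then invoke totality of \(\pi(S_{q_H(B)})\cH\) to get an operator identity ``for \(|t|<\varepsilon\)'', but there is no single interval on which the vector identity holds for a total set of vectors, so this deduction is not available. Moreover, for real \(t\neq 0\) the operators \(e^{-it\p\pi_1^c(iy)}=e^{t\cL_y^\pi}\) are unbounded positive selfadjoint operators (Theorem \ref{thm:semigrp-rep-int}(a) concerns a hermitian \emph{semigroup}, not a unitary group), so neither ``the group law of one-parameter unitary groups'' nor a naive totality argument applies; equality of unbounded operators on a total set does not give equality of the operators without a core argument. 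The correct bridge -- and this is exactly what the paper does -- is analytic continuation: for each fixed \(s\) and \(v\), the two \emph{unitary} curves \(t\mapsto\pi^{\tilde H}(h)\pi_1^c(\exp(ity))\pi(s)v\) and \(t\mapsto\pi_1^c(\exp(it\Ad(h)y))\pi(q_H(h)s)v\) extend analytically to the strip \(\{|\Im z|<\varepsilon(s)\}\) by Theorem \ref{thm:semigrp-rep-int}(a); your computation shows they agree on the purely imaginary segment of that strip; uniqueness of analytic continuation then forces them to agree for all real \(t\); and only now does totality of \(\pi(S_{q_H(B)})\cH\) yield the identity \(\pi^{\tilde H}(h)\pi_1^c(\exp(ity))\pi^{\tilde H}(h)^{-1}=\pi_1^c(\exp(it\Ad(h)y))\) between \emph{bounded} operators for all \(t\in\R\). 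With that step repaired, the rest of your propagation to all of \(G_1^c\) and \(\tilde H\) (continuous homomorphisms agreeing on generating sets; Trotter is not even needed) and your treatment of the cone \(W\) go through.
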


\begin{proof}
  (a) The local \(H\)-compatibility of \(\pi\) implies that there exists a symmetric open \(\1\)-neighborhood \(B \subset H\) such that \(\pi(S_B)\cH\) is total in \(\cH\). Let \(\pi^H : B \rightarrow \U(\cH)\) be the corresponding local representation we obtain from Proposition \ref{prop:localrep-H-rep}.
  By the Monodromy Principle for Lie groups \cite[Prop.\ 9.5.8]{HN12}, we can lift and extend \(\pi^H\) to a continuous unitary representation \((\pi^{\tilde H},\cH)\) of \(\tilde H\) which satisfies (a).
  The representation is uniquely determined by (a) and, by Remark \ref{rem:local-h-comp-unique}, it does not depend on the choice of \(B\).

  (b) Let \(y \in \fq\), \(h \in B\), \(s \in S_h\), and \(v \in \cH\).
  Choose \(\varepsilon > 0\) such that \(\exp(ty)s \in S_h\) for all \(t \in (-\varepsilon,\varepsilon)\).
  By Theorem \ref{thm:semigrp-rep-int}(a), the continuous curves
  \[\alpha(t) := \pi^H(h)\pi_1^c(\exp(tiy))\pi(s)v \quad \text{and}\]
  \[\beta(t) := \pi_1^c(\exp(it\Ad(h)y))\pi^H(h)\pi(s)v = \pi_1^c(\exp(it\Ad(h)y))\pi(hs)v\]
  have analytic continuations to the strip \(\{z \in \C : -\varepsilon < \Im(z) < \varepsilon\}\). By evaluating at \(it\), for \(|t| < \varepsilon\), and applying \eqref{eq:thm-int-analytic-curve} and \eqref{eq:localrep-H-op}, we obtain
  \[\alpha(it) = \pi^H(h)e^{-it \p\pi_1^c(iy)}\pi(s)v = \pi^H(h)\pi(\exp(ty)s)v = \pi(h\exp(ty)s)v\]
  and
  \[\beta(it) = e^{-it \p\pi_1^c(i\Ad(h)y)}\pi(h\exp(y))v = \pi(\exp(t\Ad(h)y)h\exp(y))v = \alpha(it).\]
  Thus we also have \(\alpha(t) = \beta(t)\) for all \(t \in \R\). By applying the same argument to all \(s \in S_h\) and \(v \in \cH\), we obtain by the totality of \(\pi(S_h)\cH\) that
  \[\pi^{\tilde H}(h)\pi_1^c(\exp(ity))\pi^{\tilde H}(h)^{-1} = \pi_1^c(\exp(it\Ad(h)y)), \quad \text{for } t \in \R, y \in \fq, h \in \tilde H.\]
  Since \(i\fq\) generates \(\g_1^c\), this proves (b).

  (c) Let \(x \in \g_1^c\) and \(h \in \tilde H\). Then (b) implies \(\pi^{\tilde H}(h)\p\pi_1^c(x)\pi^{\tilde H}(h)^{-1} = \p\pi_1^c(\Ad(h)x)\). Thus, (c) follows from spectral calculus.
\end{proof}

\begin{lem}
  \label{lem:semidir-prod-surj}
  Let \((G,\tau)\) be a 1-connected symmetric Lie group with Lie algebra \(\g = \h \oplus \fq\) and let \(G_1\) be the integral subgroup of \(G\) with Lie algebra \(\g_1 = [\fq,\fq] \oplus \fq\). Let \(H\) be the integral subgroup of \(G\) with Lie algebra \(\fh\) and let \(q_H: \tilde H \rightarrow H\) be the universal covering group of \(H\). Consider the semidirect product \(G_1 \rtimes \tilde H\), where \(\tilde H\) acts on \(G_1\) by the integrated adjoint representation, and the map
  \begin{equation}
    \label{eq:semidir-prod-surj}
    \varphi : G_1 \rtimes \tilde H \rightarrow G, \quad (g,h) \mapsto gq_H(h).
  \end{equation}
  Then the following holds:
  \begin{enumerate}
    \item \(\varphi\) is a surjective homomorphism of Lie groups whose kernel is given by the integral subgroup \(\Delta(H_\fq)\) with Lie algebra \(\{(x,-x) : x \in [\fq,\fq]\}\).
    \item Let \(H_1\) be the integral subgroup of \(G\) with Lie algebra \([\fq,\fq]\). Then \(\varphi\) restricts to a surjective homomorphism \(\varphi\lvert_{H_1 \rtimes \tilde H}: H_1 \rtimes \tilde H \rightarrow H\) whose kernel is given by \(\Delta(H_\fq)\).
  \end{enumerate}
\end{lem}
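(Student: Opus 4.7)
The plan is to verify that $\varphi$ is a Lie group homomorphism with surjective differential, identify the Lie algebra of its kernel, and use the $1$-connectedness of $G$ to conclude that the kernel is connected and equals $\Delta(H_\fq)$.

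First I verify that $\varphi$ is a group homomorphism. Since $\g_1$ is an ideal in $\g$ and $G$ is $1$-connected, the integral subgroup $G_1$ is closed, normal, and $1$-connected by \cite[Ch.~XII, Thm.~1.2]{Ho65} (the same argument used earlier for $G_1^c$). Hence conjugation $c_h(g) := q_H(h) g q_H(h)^{-1}$ defines an automorphism of $G_1$ with differential $\Ad(q_H(h))|_{\g_1}$; by $1$-connectedness of $G_1$ this coincides with the automorphism $\alpha_h$ defining the semidirect product. A direct computation using this identity then gives
\[
\varphi\bigl((g_1,h_1)(g_2,h_2)\bigr) = g_1 \alpha_{h_1}(g_2)\, q_H(h_1) q_H(h_2) = g_1 q_H(h_1) g_2 q_H(h_2) = \varphi(g_1,h_1)\varphi(g_2,h_2).
\]

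Second, the differential $\L(\varphi)\colon \g_1 \oplus \fh \to \g$ is the addition map $(x,y) \mapsto x+y$, whose image is $\g_1 + \fh = \fq + \fh = \g$. Thus $\varphi$ is a submersion, its image is an open subgroup of $G$, and being open it is also closed, so equals the connected group $G$. To identify $\ker \L(\varphi)$, decompose $x \in \g_1$ as $x = a + b$ with $a \in [\fq,\fq]$ and $b \in \fq$; the equation $x + y = 0$ with $y \in \fh$ forces $b = 0$ and $y = -a$, using $\fh \cap \fq = 0$. Hence $\ker \L(\varphi) = \{(a,-a) : a \in [\fq,\fq]\}$.

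Third, to obtain (a) it remains to show that the closed Lie subgroup $\ker \varphi$ (whose identity component has the Lie algebra above) is actually connected. I apply the homotopy long exact sequence of the principal bundle $\ker \varphi \to G_1 \rtimes \tilde H \to G$: since $G$ is $1$-connected, $\pi_1(G) = 0$, and since $G_1$ and $\tilde H$ are connected so is $G_1 \rtimes \tilde H$, so the exact sequence gives $\pi_0(\ker \varphi) = 0$. Therefore $\ker \varphi$ equals the integral subgroup with Lie algebra $\{(a,-a) : a \in [\fq,\fq]\}$, which by definition is $\Delta(H_\fq)$.

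For (b), $[\fq,\fq] \subset \fh$ gives $H_1 \subset H$, so $\varphi(H_1 \rtimes \tilde H) = H_1\, q_H(\tilde H) = H_1 H = H$. Since $\L(\Delta(H_\fq)) = \{(a,-a) : a \in [\fq,\fq]\} \subset [\fq,\fq] \oplus \fh = \L(H_1 \rtimes \tilde H)$ and $\Delta(H_\fq)$ is connected, it is contained in $H_1 \rtimes \tilde H$; combined with $\ker(\varphi|_{H_1 \rtimes \tilde H}) \subset \ker \varphi = \Delta(H_\fq)$ this gives equality. The main obstacle is the homomorphism verification in the first step, which depends on first establishing the $1$-connectedness of $G_1$ so that the abstract $\tilde H$-action used to form the semidirect product can be identified with conjugation via $q_H$; the remaining parts are standard Lie-theoretic bookkeeping together with the long-exact-sequence argument for connectedness of the kernel.
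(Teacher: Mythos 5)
Your proposal is correct and follows essentially the same route as the paper: compute $\L(\varphi)$ as the addition map, identify $\ker\L(\varphi)$ with the anti-diagonal copy of $[\fq,\fq]$, and use the $1$-connectedness of $G$ (together with connectedness of $G_1\rtimes\tilde H$) to force $\ker\varphi$ to be connected — the paper phrases this last step as a covering-space argument rather than via the homotopy exact sequence, but the two are equivalent. Your explicit verification that the integrated adjoint action agrees with conjugation by $q_H(h)$, so that $\varphi$ is indeed a homomorphism, is a detail the paper leaves implicit and is correctly handled.
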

\begin{proof}
  (a) The derivative of \(\varphi\) is given by \(\L(\varphi)(g,h) = g + h\) for \(g \in \g_1,h \in \fh\).
  Thus, \(\varphi\) is surjective because \(G\) is connected and \(\Delta(H_\fq) = (\ker \varphi)_0\).
  
  It remains to show that the kernel of \(\varphi\) is connected. We first note that, since \(\g_1\) is an ideal in \(\g\), the subgroup \(G_1\) is 1-connected (cf.\ \cite[Ch.\ XII, Thm.\ 1.2]{Ho65}). Hence, the semidirect product \(G_1 \rtimes \tilde H\) is also 1-connected.
  If \(\ker \varphi\) was not connected, then the map
  \[(G_1 \rtimes \tilde H)/(\ker \varphi)_0 \rightarrow G, \quad g(\ker \varphi)_0 \mapsto \varphi(g),\]
  would be a non-trivial covering of \(G\). Hence, we have \((\ker \varphi)_0 = \ker \varphi\).
  By restricting \(\varphi\) to the subgroup \(H_1 \rtimes \tilde H\), we obtain (b).
\end{proof}

\begin{thm}
  \label{thm:localrep}
  Suppose that \((\pi,\cH)\) is locally \(H\)-compatible {\rm(Definition \ref{def:local-h-comp})} and let \((\pi_1^c,\cH)\) be the analytic continuation of \(\pi\) to \(G_1^c\) {\rm(Theorem \ref{thm:si-integration})}. Let \(H^c\) be the integral subgroup of \(G^c\) with Lie algebra \(\fh\), let \(q_{H^c}: \tilde H \rightarrow H^c\) be its universal covering, and let \((\pi^{\tilde H}, \cH)\) be the unitary representation of \(\tilde H\) constructed in {\rm Proposition \ref{prop:localrep-comm-rel}}.
  Then there exists a unique extension of \((\pi^c_1,\cH)\) to a strongly continuous unitary representation \((\pi^c,\cH)\) of \(G^c\) such that \(\pi^c(q_{H^c}(h)) = \pi^{\tilde H}(h)\) for all \(h \in \tilde H\).
\end{thm}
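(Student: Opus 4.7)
The plan is to leverage Lemma \ref{lem:semidir-prod-surj} applied to the 1-connected group $G^c$: the map $\varphi : G_1^c \rtimes \tilde H \to G^c$, $(g,h) \mapsto g\,q_{H^c}(h)$, is a surjective homomorphism with connected kernel $\Delta(H_\fq)$, the integral subgroup whose Lie algebra is $\{(x,-x) : x \in [\fq,\fq]\}$. So it suffices to assemble $\pi_1^c$ and $\pi^{\tilde H}$ into a strongly continuous unitary representation $\tilde\pi$ of the semidirect product $G_1^c \rtimes \tilde H$, verify triviality on $\Delta(H_\fq)$, and then descend along $\varphi$.

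First, I would define $\tilde\pi(g,h) := \pi_1^c(g)\,\pi^{\tilde H}(h)$. Proposition \ref{prop:localrep-comm-rel}(b) gives the intertwining relation $\pi^{\tilde H}(h)\pi_1^c(g')\pi^{\tilde H}(h)^{-1} = \pi_1^c(\alpha_h(g'))$, from which one checks that $\tilde\pi$ is a group homomorphism into $\U(\cH)$; strong continuity follows from strong continuity of both factors.

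The key step is showing that $\tilde\pi$ is trivial on $\Delta(H_\fq)$. Since $\Delta(H_\fq)$ is connected and generated by one-parameter subgroups $t \mapsto (\exp_{G_1^c}(tx), \exp_{\tilde H}(-tx))$ with $x \in [\fq,\fq]$, this reduces to
\[\pi_1^c(\exp_{G_1^c}(tx)) = \pi^{\tilde H}(\exp_{\tilde H}(tx)) \qquad (t\in\R,\ x\in[\fq,\fq]).\]
To establish this, I would fix a symmetric open $\1$-neighborhood $B \subset \tilde H$ such that $\pi(S_{q_{H^c}(B)})\cH$ is total, and compare the two sides applied to $\pi(s)v$ with $s \in S_{q_{H^c}(B)}$, $v \in \cH$. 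For $|t|$ sufficiently small, Proposition \ref{prop:localrep-comm-rel}(a) gives $\pi^{\tilde H}(\exp_{\tilde H}(tx))\pi(s)v = \pi(\exp_G(tx)s)v$, while Theorem \ref{thm:semigrp-rep-int}(b) gives $\pi_1^c(\exp_{G_1^c}(tx))\pi(s)v = \pi(\exp_G(tx)s)v$ for the same right-hand side (noting that $\exp_G(tx)$ belongs to $H \cap G$ since $x \in [\fq,\fq] \subset \fh$). Totality then equates the two one-parameter unitary groups on all of $\cH$ for small $t$, and the group law propagates the equality to $\R$. I expect this matching of the ``geometric'' Lie derivative picture built into $\pi^{\tilde H}$ with the spectral-theoretic one built into $\pi_1^c$ to be the principal subtlety.

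Having established vanishing on $\ker\varphi$, $\tilde\pi$ factors through $\varphi$ to a strongly continuous unitary representation $\pi^c : G^c \to \U(\cH)$. Evaluating at $(g,\1)$ yields $\pi^c|_{G_1^c} = \pi_1^c$, and at $(\1,h)$ yields $\pi^c(q_{H^c}(h)) = \pi^{\tilde H}(h)$. Uniqueness is immediate: any such extension is determined on $G_1^c$ and $H^c = q_{H^c}(\tilde H)$, which together generate the connected group $G^c$ since $\g^c = \g_1^c + \fh$.
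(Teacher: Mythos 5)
Your proposal is correct and follows essentially the same route as the paper: build the representation $\tilde\pi(g,h)=\pi_1^c(g)\pi^{\tilde H}(h)$ of $G_1^c\rtimes\tilde H$ via Proposition \ref{prop:localrep-comm-rel}(b), use Lemma \ref{lem:semidir-prod-surj} to identify $G^c$ as the quotient by $\Delta(H_\fq)$, and kill the kernel by matching $\pi_1^c(\exp_{G_1^c}(tx))$ with $\pi^{\tilde H}(\exp_{\tilde H}(tx))$ on the total set $\pi(S_B)\cH$ through Theorem \ref{thm:semigrp-rep-int}(b) and Proposition \ref{prop:localrep-comm-rel}(a). No gaps.
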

\begin{proof}
  Our assumptions already determine \(\pi^c\) on \(H^c\) and \(G_1^c\), which proves the uniqueness of \(\pi^c\). It remains to show its existence.
  By Proposition \ref{prop:localrep-comm-rel}b), we can extend \((\pi_1^c,\cH)\) to a strongly continuous unitary representation
  \[\tilde \pi : G_1^c \rtimes \tilde H \rightarrow \U(\cH), \quad (g,h) \mapsto \pi_1^c(g)\pi^{\tilde H}(h),\]
  where \(\tilde H\) acts on \(G_1^c\) by the integrated adjoint representation.
  The map
  \[G_1^c \rtimes \tilde H \rightarrow G^c, \quad (g,h) \mapsto gq_{H^c}(h),\]
  is a surjective homomorphism of Lie groups whose kernel is the integral subgroup \(\Delta(H_\fq)\) of \(G_1^c \rtimes \tilde H\) with Lie algebra \(\{(x,-x) : x \in [\fq,\fq]\} \subset \g_1^c \rtimes \fh\) (cf.\ Lemma \ref{lem:semidir-prod-surj}(a)).
  Thus it remains to show that \((\tilde \pi, \cH)\) factors through a continuous unitary representation of the Lie group \((G_1^c \rtimes \tilde H)/\Delta(H_\fq) \cong G^c\). 
  Choose a symmetric open \(\1\)-neighborhood \(B \subset H\) such that \(\pi(S_B)\cH\) is dense in \(\cH\). Let \(x \in [\fq,\fq]\) and let \(\varepsilon > 0\) such that \(\exp(tx) \in B\) for \(|t| < \varepsilon\). 
  Then, by Theorem \ref{thm:semigrp-rep-int}, we have for every \(s \in S_B\)
  \[\pi^c_1(\exp_{G_1^c}(tx))\pi(s) = \pi(\exp_G(tx)s) \quad \text{for } |t| < \varepsilon.\]
  We thus have \(\pi^c_1(\exp_{G_1^c}(tx)) = \pi^{\tilde H}(\exp_{\tilde H}(tx))\) for \(t \in (-\varepsilon,\varepsilon)\) because \(\pi(S_B)\cH\) is total in \(\cH\), which implies that equality holds for all \(t \in \R\).
  As a result, we have \(\Delta(H_\fq) \subset \ker \tilde\pi\), which proves the claim.
\end{proof}

We call \((\pi^c,\cH)\) the \emph{analytic continuation of \((\pi,\cH)\) to \(G^c\)}. The following theorem explains the relation between the semigroup representation and its analytic continuation.

\begin{thm}{\rm (Analytic Continuation Theorem)}
  \label{thm:localrep-analytic-cont}
  Let \(S \subset G\) be an open \(*\)-subsemigroup of \(G\) and let \((\pi,\cH)\) be a continuous non-degenerate \(*\)-representation of \(S\) which is locally \(H\)-compatible {\rm(Definition \ref{def:local-h-comp})}. Then the analytic continuation \((\pi^c,\cH)\) of \((\pi,\cH)\) to \(G^c\) {\rm(Theorem \ref{thm:localrep})} has the following properties:
  \begin{enumerate}
    \item For \(y \in \fq\) and \(s \in S\) with \(\exp(ty)s \in S\) for \(|t| < \varepsilon\), we have
      \[\pi(\exp(ty)s) = e^{-it \p\pi^c(iy)}\pi(s) \quad \text{for }\, |t| < \varepsilon.\]
      The curve \(t \mapsto \pi(\exp(ty)s)\) is analytic as a \(B(\cH)\)-valued curve.
    \item For \(x \in [\fq,\fq]\) and \(s \in S\) with \(\exp(tx)s \in S\) for \(|t| < \varepsilon\), we have
      \[\pi(\exp(tx)s) = \pi^c(\exp(tx))\pi(s) \quad \text{for }\, |t| < \varepsilon.\]
    \item If \(B \subset H\) is a symmetric open \(\1\)-neighborhood such that \(\pi(S_B)\cH\) is total in \(\cH\), then 
      \[\pi(\exp_G(x)s) = \pi^c(\exp_{G^c}(x))\pi(s) \quad \text{for all } s \in S_B, x \in \exp_{G}^{-1}(B).\]
  \end{enumerate}
\end{thm}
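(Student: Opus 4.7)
Parts (a) and (b) are direct consequences of Theorem \ref{thm:semigrp-rep-int}. By Theorem \ref{thm:localrep}, \((\pi^c,\cH)\) is a strongly continuous unitary extension of the analytic continuation \((\pi^c_1,\cH)\) to \(G^c_1 \subset G^c\); in particular, \(\p\pi^c(iy) = \p\pi^c_1(iy)\) for \(y \in \fq\), and \(\pi^c(\exp_{G^c}(tx)) = \pi^c_1(\exp_{G^c_1}(tx))\) for \(x \in [\fq,\fq] \subset \g^c_1\). Substituting these identities into parts (a) and (b) of Theorem \ref{thm:semigrp-rep-int} yields the two assertions, with the \(B(\cH)\)-valued analyticity in (a) inherited from there.

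For (c), fix \(s \in S_B\) and \(x \in \exp_G^{-1}(B)\); since \(\exp_{G^c}(x) \in G^c\) is only meaningful for \(x \in \g^c\), and \(x\) is real, necessarily \(x \in \fh\). The defining property of \(\pi^c\) in Theorem \ref{thm:localrep}, together with \(q_{H^c}(\exp_{\tilde H}(x)) = \exp_{G^c}(x)\), gives
\[
\pi^c(\exp_{G^c}(x)) = \pi^{\tilde H}(\exp_{\tilde H}(x)),
\]
so it suffices to show \(\pi^{\tilde H}(\exp_{\tilde H}(x))\pi(s) = \pi(\exp_G(x)s)\). Pick a symmetric open \(\1\)-neighborhood \(\tilde B \subset \tilde H\) on which \(q_H\) is a diffeomorphism with \(q_H(\tilde B) \subset B\). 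Then \(S_B \subset S_{q_H(\tilde B)}\), so \(\pi(S_{q_H(\tilde B)})\cH\) is total in \(\cH\), and Proposition \ref{prop:localrep-comm-rel}(a) yields
\[
\pi^{\tilde H}(\tilde h)\pi(s') = \pi(q_H(\tilde h)s') \qquad\text{for } \tilde h \in \tilde B,\, s' \in S_B.
\]
This immediately handles the case when \(\exp_{\tilde H}(x) \in \tilde B\), i.e.\ when \(x\) is small.

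For arbitrary \(x\), I would propagate the identity along the one-parameter subgroup \(t \mapsto \exp_{\tilde H}(tx)\), \(t \in [0,1]\). Let
\[
I := \{\, t \in [0,1] : \exp_G(tx)s \in S \text{ and } \pi^{\tilde H}(\exp_{\tilde H}(tx))\pi(s) = \pi(\exp_G(tx)s)\,\}.
\]
Strong continuity of \(\pi\) and \(\pi^{\tilde H}\) makes \(I\) closed in the open set \(O := \{t \in [0,1] : \exp_G(tx)s \in S\}\). For openness at \(t_0 \in I\), choose a smaller symmetric \(\1\)-neighborhood \(\tilde B_{t_0} \subset \tilde B\) with \(q_H(\tilde B_{t_0})\exp_G(t_0 x)s \subset S\) (possible by openness of \(S\)), apply Proposition \ref{prop:localrep-comm-rel}(a) to \(s' := \exp_G(t_0 x)s \in S_{q_H(\tilde B_{t_0})}\), and combine with the one-parameter relation \(\exp_{\tilde H}((t_0+\delta)x) = \exp_{\tilde H}(\delta x)\exp_{\tilde H}(t_0 x)\) to extend the identity to a neighborhood of \(t_0\). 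The main obstacle is that \(O\) need not be connected even though it contains both \(0\) and \(1\), so the continuation only yields the identity on the connected component of \(0\) in \(O\); this is resolved by arranging \(B\) to be small enough (e.g.\ a Baker--Campbell--Hausdorff neighborhood of \(\1\) in \(H\)) that \(\exp_G(tx) \in B\) for every \(t \in [0,1]\) whenever \(\exp_G(x) \in B\), so that \(O = [0,1]\) and \(1 \in I\) as required.
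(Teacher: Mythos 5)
Parts (a) and (b) are handled exactly as in the paper: they follow from Theorem \ref{thm:semigrp-rep-int} once one notes that \(\pi^c\) restricts to \(\pi_1^c\) on \(G_1^c\) by construction in Theorem \ref{thm:localrep}, so that \(\p\pi^c(iy)=\p\pi_1^c(iy)\) and \(\pi^c(\exp_{G^c}(tx))=\pi_1^c(\exp_{G_1^c}(tx))\) for \(y\in\fq\) and \(x\in[\fq,\fq]\). This part of your argument is correct.

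For (c) there is a genuine gap. The statement asserts the identity for \emph{every} symmetric open \(\1\)-neighborhood \(B\subset H\) with \(\pi(S_B)\cH\) total and for \emph{every} \(x\in\exp_G^{-1}(B)\). You invoke Proposition \ref{prop:localrep-comm-rel}(a) only for neighborhoods \(\tilde B\subset\tilde H\) on which \(q_H\) is injective, and then try to reach general \(x\) by propagating along \(t\mapsto\exp_{\tilde H}(tx)\), \(t\in[0,1]\). That propagation needs \(\exp_G(tx)s\in S\) for all intermediate \(t\), which is not guaranteed, and your proposed repair --- ``arrange \(B\) to be small enough'' --- is not available: \(B\) is part of the hypothesis, not a choice; moreover, even for a small \(B\) the set \(\exp_G^{-1}(B)\) contains arbitrarily large elements of \(\fh\) whenever \(\exp_G\lvert_{\fh}\) fails to be injective (e.g.\ \(H\cong\T\)), and for such \(x\) the curve \(\exp_G(tx)\) leaves \(B\). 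So your argument establishes (c) only under extra hypotheses on \(B\) and \(x\). The intended derivation avoids the issue entirely: apply Proposition \ref{prop:localrep-comm-rel}(a) with \(\tilde B:=q_H^{-1}(B)\), which is a symmetric open \(\1\)-neighborhood of \(\tilde H\) (nothing in that Proposition requires \(\tilde B\) to be connected or to inject under \(q_H\)) satisfying \(q_H(\tilde B)=B\), hence \(S_{q_H(\tilde B)}=S_B\) and the totality hypothesis holds. Since \(q_H(\exp_{\tilde H}(x))=\exp_G(x)\in B\) gives \(\exp_{\tilde H}(x)\in\tilde B\), the Proposition yields \(\pi^{\tilde H}(\exp_{\tilde H}(x))\pi(s)=\pi(\exp_G(x)s)\) for all \(s\in S_B\) directly, and \(\pi^c(\exp_{G^c}(x))=\pi^{\tilde H}(\exp_{\tilde H}(x))\) by Theorem \ref{thm:localrep}. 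The global validity of the Proposition for large group elements is indeed where the real work sits, but it is part of the cited statement and should not be re-derived (incompletely) inside this proof.
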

\begin{proof}
  Properties (a) and (b) follow from Theorem \ref{thm:semigrp-rep-int} and property (c) is a consequence of Proposition \ref{prop:localrep-comm-rel}(a).
\end{proof}

For the remainder of this section, we will consider a certain class of semigroups for which the local \(H\)-compatibility is always satisfied.
For the open semigroup \(S\), we define
\[\L^o(S) := \{x \in \g : (\forall t > 0) \, \exp(tx) \in S\}\]
and suppose that \(\L^o(S) \cap \fq\) is non-empty.

\begin{rem}
  \label{rem:semgrp-rep-locbnd}
  Every strongly continuous representation \((\pi, \cH)\) of \(S\) is locally bounded: For every \(s \in S\), there exists a compact neighborhood \(V \subset S\) of \(s\), so that the map \(s \mapsto \|\pi(s)v\|\) is bounded on \(V\) for every \(v \in \cH\).
    By the Principle of Uniform Boundedness, this implies that \(\sup_{t \in V}\|\pi(t)\| < \infty\).
\end{rem}

\begin{prop}
  \label{prop:cont-op-semgrps}
  Let \(c > 0\) and \(\pi: (c,\infty) \rightarrow B(\cH)\) be a locally bounded non-degenerate representation by selfadjoint operators. Then there there exists a unique extension to a representation \(\pi: [0,\infty) \rightarrow B(\cH)\) by selfadjoint operators with \(\pi(0) = \id_\cH\). The representation \(\pi\) is analytic on \((0,\infty)\) and, for every \(t \in [0,\infty)\), the subspace \(\pi(t)\cH\) is dense in \(\cH\).
\end{prop}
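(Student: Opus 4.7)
The plan is to manufacture a (possibly unbounded) selfadjoint generator $A$ from a single operator $\pi(t_0)$ with $t_0 > 2c$ by Borel functional calculus, then identify $\pi(t) = e^{-tA}$ for every $t > c$, and finally define $\pi(t) := e^{-tA}$ on $[0,c]$ to obtain the extension.

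First, I would show that $\pi(t)$ is a positive injective operator for every $t > 2c$. Positivity is immediate from $\pi(t) = \pi(t/2)^2$ with $\pi(t/2)$ selfadjoint. For injectivity, suppose $\pi(t)v = 0$; then $\|\pi(t/2)v\|^2 = \la \pi(t)v,v\ra = 0$ forces $\pi(t/2)v = 0$. Iterating this halving as long as the argument stays above $c$, and then using $\pi(s)v = \pi(s - t/2^n)\pi(t/2^n)v$ for $s > c$ with $n$ chosen so that $t/2^n < s - c$, one concludes $\pi(s)v = 0$ for every $s > c$, whence $v = 0$ by non-degeneracy. Consequently $\spec(\pi(t)) \subset (0, \|\pi(t)\|]$ and $\pi(t)\cH$ is dense, for every $t > 2c$.

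Second, fix $t_0 > 2c$ and set $A := -t_0^{-1}\log\pi(t_0)$ via Borel functional calculus; the bound $\spec(\pi(t_0)) \subset (0,\|\pi(t_0)\|]$ gives that $A$ is selfadjoint with spectrum bounded below by $-t_0^{-1}\log\|\pi(t_0)\|$. Hence $(e^{-tA})_{t\geq 0}$ is a strongly continuous semigroup of bounded positive selfadjoint operators with $e^{0\cdot A} = \id_\cH$, and by the standard theory of semigroups generated by a bounded-below selfadjoint operator, $t \mapsto e^{-tA}$ extends to a $B(\cH)$-valued holomorphic function on $\{z \in \C : \Re z > 0\}$. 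To match $\pi$ with $e^{-\cdot A}$ on $(c,\infty)$, I proceed in two stages. For rationals $t = mt_0/n$ with $t > 2c$, the semigroup law gives $\pi(t)^n = \pi(mt_0) = \pi(t_0)^m = e^{-mt_0 A}$, and uniqueness of the positive $n$-th root of the positive operator $\pi(t)$ yields $\pi(t) = e^{-tA}$. Strong continuity of $\pi$ and density of such rationals in $(2c,\infty)$ extend the identity to every $t > 2c$. For $c < t \leq 2c$, pick any $s > 2c$; then $\pi(t)\pi(s) = \pi(t+s) = e^{-(t+s)A} = e^{-tA}\pi(s)$, and the dense range of $\pi(s)$ forces $\pi(t) = e^{-tA}$.

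Finally, setting $\pi(t) := e^{-tA}$ for $t \in [0,c]$ gives the extension; the semigroup law, selfadjointness, and $\pi(0) = \id_\cH$ are automatic, analyticity on $(0,\infty)$ was noted above, and density of $\pi(t)\cH$ holds because $\lambda \mapsto e^{-t\lambda}$ vanishes nowhere on $\spec(A)$, so $e^{-tA}$ is injective with dense range (the case $t=0$ is trivial). Uniqueness follows from the same semigroup-plus-dense-range argument: any second extension must coincide with $\pi$ on $(c,\infty)$ by hypothesis, and then on $[0,c]$ by composing with $\pi(s)$ for some $s > 2c$ and using that $\pi(s)\cH$ is dense. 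The main obstacle in the plan is the injectivity step in the very first paragraph, where positivity (from the squaring identity) and the semigroup law must be iterated in tandem to propagate $\pi(t)v = 0$ down to all of $(c,\infty)$ and invoke non-degeneracy; once that is secured, the construction of $A$ via functional calculus and the comparison with $\pi$ are essentially routine facts about analytic semigroups of positive selfadjoint operators.
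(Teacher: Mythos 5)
Your overall strategy---manufacture the generator $A$ by Borel functional calculus from a single operator $\pi(t_0)$ and then identify $\pi(t)=e^{-tA}$---is genuinely different from the paper's, which first invokes \cite[Lem.\ VI.2.2]{Ne00} to produce the extension together with its continuity, obtains analyticity from Widder's theorem \cite{Wi34} applied to the positive definite kernel $(t,t')\mapsto\la v,\pi(\tfrac{t+t'}{2})v\ra$, and proves density of the ranges by truncating with spectral projections $E_m=\chi_{[-m,m]}(A)$. Your construction could be made to work, but as written it has one genuine gap: the proposition does \emph{not} assume that $\pi$ is strongly continuous, only that it is locally bounded; continuity is part of the conclusion (it is subsumed in the asserted analyticity, and it is exactly what the cited lemma of \cite{Ne00} supplies). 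Your identification of $\pi(t)$ with $e^{-tA}$ on $(2c,\infty)$ passes from the rational multiples $mt_0/n$ to general $t$ by invoking ``strong continuity of $\pi$'', so at the decisive step you are using a hypothesis that is not available. To close the gap you must first prove continuity, for instance along the paper's route: for each $v$ the function $\varphi(t)=\la v,\pi(t)v\ra$ makes $(t,t')\mapsto\varphi(\tfrac{t+t'}{2})$ a positive definite kernel, and a locally bounded function with this property is a Laplace transform of a positive measure by \cite{Wi34}, hence analytic; only then is the passage from rationals to all reals legitimate.

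Two smaller points. First, the injectivity argument does not quite close as written: the halving only brings you down to some $u=t/2^n\in(c,2c]$, and the requirement $t/2^n<s-c$ is unsatisfiable for $s\le 2c$ since $t/2^n>c$; you therefore only obtain $\pi(s)v=0$ for $s>u+c$, and non-degeneracy of $\pi$ on all of $(c,\infty)$ does not directly apply to that smaller ray. This is repairable: once $\pi(s)v=0$ for all $s>s_0$, the identity $\|\pi(r)v\|^2=\la v,\pi(2r)v\ra$ gives $\pi(r)v=0$ for all $r>\max(c,s_0/2)$, and iterating drives the threshold down to $c$, after which non-degeneracy yields $v=0$. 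Second, the inclusion $\spec(\pi(t))\subset(0,\|\pi(t)\|]$ is not literally correct, since $0$ may lie in the continuous spectrum of an injective positive operator; what you actually need---and what injectivity does give---is that the spectral measure of $\pi(t_0)$ does not charge $\{0\}$, so that $\log\pi(t_0)$ is a densely defined selfadjoint operator and $A$ is bounded below. Neither of these two issues is fatal, but the continuity gap in the first paragraph is.
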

\begin{proof}
  By \cite[Lemma VI.2.2]{Ne00}, \(\pi\) can be uniquely extended to a representation of \([0,\infty)\) which is strongly continuous on \((0,\infty)\) and satisfies \(\pi(0) = \id_\cH\). From the proof it also follows that the extended representation is selfadjoint. Hence, the generator \(A\) of \(\pi\) is selfadjoint and we have \(\pi(t) = e^{tA},\,t \geq 0,\) in the sense of spectral calculus.

  For \(v \in \cH\), consider the continuous function
  \[\varphi(t) := \la v, \pi(t)v \ra, \quad t > 0.\]
  The kernel \(K(t,t') := \varphi(\frac{t+t'}{2}) = \la \pi(t/2)v, \pi(t'/2)v\ra,\,t,t' > 0\) is positive definite, hence the function \(\varphi\) is analytic in \((0,\infty)\) by \cite{Wi34}. By \cite[Thm.\ 5.1]{Ne10b}, \(\pi\) is strongly analytic in \((0,\infty)\). The local boundedness of \(\pi\) (cf.\ Remark \ref{rem:semgrp-rep-locbnd}) implies that it is also analytic as a \(B(\cH)\) valued map (cf.\ \cite[Cor.\ A.III.5]{Ne00}).

  Let \(t \in [0,\infty)\). It remains to show that \(\pi(t)\cH\) is dense in \(\cH\). For \(m > 0\), we define \(E_m := \chi_{[-m,m]}(A)\) by spectral calculus and set
  \[\pi_m(t) := E_m\pi(t) = \pi(t)E_m.\]
  Since \(\pi_m(t)\) is invertible for each \(m\) and \(\lim_{m \rightarrow \infty}E_m = \id_\cH\) strongly, we have for all \(v \in \cH\):
  \[\lim_{m \rightarrow \infty}\pi(t)\pi_m(-t)v = \lim_{m \rightarrow \infty}\pi_m(t)\pi_m(-t)v = v.\]
  Hence, \(\pi(t)\cH\) is dense in \(\cH\).
\end{proof}

\begin{coro}
  \label{cor:semigrp-rep-liecone-curve}
  Let \(y \in \fq \cap \L^o(S)\). Then the curve
  \[\gamma : (0,\infty) \rightarrow B(\cH), \quad t \mapsto \pi(\exp(ty))\]
  is strongly continuous and analytic on \((0,\infty)\). For each \(t > 0\), the subspace \(\gamma(t)\cH\) is dense in \(\cH\).
\end{coro}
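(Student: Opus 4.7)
The plan is to apply \textrm{Proposition \ref{prop:cont-op-semgrps}} to $\gamma$, viewed as a one-parameter subsemigroup representation of $(0,\infty)$ on $\cH$. The first step is to verify the structural hypotheses: because $y \in \fq$ forces $\exp(ty)^* = \tau(\exp(ty))^{-1} = \exp(ty)$, each $\gamma(t)$ is a selfadjoint bounded operator; the semigroup identity $\gamma(t+s)=\gamma(t)\gamma(s)$, strong continuity on $(0,\infty)$, and local boundedness (via \textrm{Remark~\ref{rem:semgrp-rep-locbnd}}) are inherited directly from $(\pi,\cH)$.

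The substantive step is non-degeneracy of $\gamma$. My approach would be to combine the factorisation $\gamma(t)=\gamma(t/n)^n$ with the identity $\ker A^n = \ker A$, valid for any bounded selfadjoint $A$ by the spectral theorem, to obtain $\ker\gamma(t)=\ker\gamma(t/n)$ for every $n\in\N$. Sandwiching this between the obvious monotonicity $\ker\gamma(s)\subset\ker\gamma(t)$ for $0<s\le t$ then shows that $\cK:=\ker\gamma(t)$ is independent of $t>0$. I expect this to be the key technical point. To conclude, take $v\in\cK$ and $s\in S$: the elements $s\exp(ty)$ lie in $S$ for every $t>0$, and strong continuity of $\pi$ on $S$ yields
\[\pi(s)v = \lim_{t\to 0^+}\pi(s\exp(ty))v = \lim_{t\to 0^+}\pi(s)\gamma(t)v = 0\]
for all $s\in S$, so $v=0$ by non-degeneracy of $\pi$. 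Selfadjointness then gives $\oline{\gamma(t)\cH}=(\ker\gamma(t))^\perp=\cH$ for every $t>0$, which settles the density claim.

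With non-degeneracy in hand, \textrm{Proposition~\ref{prop:cont-op-semgrps}} applied to the restriction $\gamma|_{(c,\infty)}$ for an arbitrary $c>0$ delivers an analytic extension, which by uniqueness must agree with $\gamma$ on $(c,\infty)$; letting $c\to 0^+$ shows that $\gamma$ itself is analytic on $(0,\infty)$. Strong continuity was already established in the first step. The main obstacle is the non-degeneracy verification: without the selfadjoint identity $\ker A^n = \ker A$, information about $\ker\gamma(t)$ at small $t$ (where continuity can be combined with non-degeneracy of $\pi$) cannot be transported to arbitrary $t>0$.
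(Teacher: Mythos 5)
Your proof is correct and follows the same overall strategy as the paper: check that $\gamma$ is a locally bounded non-degenerate semigroup of selfadjoint operators and then invoke Proposition~\ref{prop:cont-op-semgrps}. The only real divergence is in the non-degeneracy step. The paper handles it in one line by observing that for $s\in S$, $v\in\cH$ one has $\lim_{t\to0^+}\gamma(t)\pi(s)v=\lim_{t\to0^+}\pi(\exp(ty)s)v=\pi(s)v$, so the total set $\pi(S)\cH$ lies in the closed span of the ranges $\gamma(t)\cH$; the density of each individual $\gamma(t)\cH$ is then read off from the conclusion of Proposition~\ref{prop:cont-op-semgrps} rather than proved beforehand. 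You instead prove triviality of $\ker\gamma(t)$ directly, using $\ker A^n=\ker A$ for selfadjoint $A$ to show the kernel is independent of $t$ and then transporting the information to $t\to0^+$ via the same limit computation. Your route costs an extra (elementary) lemma but buys the density statement for each fixed $t$ independently of the spectral-calculus argument inside Proposition~\ref{prop:cont-op-semgrps}; both are sound, and the remaining steps (analyticity on $(c,\infty)$ for every $c>0$) agree with the paper.
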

\begin{proof}
  For all \(s \in S\) and \(v \in \cH\), we have
  \[\lim_{t \rightarrow 0} \pi(\exp(ty))\pi(s)v = \lim_{t \rightarrow 0}\pi(\exp(ty)s)v = \pi(s)v,\]
  which shows that \(\gamma\) is a non-degenerate representation because \(\pi\) is non-degenerate. As \(\pi\) is locally bounded (cf.\ Remark \ref{rem:semgrp-rep-locbnd}), \(\gamma\) is locally bounded as well. Hence, the claim follows from Proposition \ref{prop:cont-op-semgrps}.
\end{proof}

\begin{coro}
  \label{cor:semigrp-rep-liecone}
  If \(\fq \cap \L^o(S) \neq \emptyset\), then every strongly continuous non-degenerate \(*\)-representation of \(S\) is locally \(H\)-compatible. 
\end{coro}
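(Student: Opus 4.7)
The plan is to combine Corollary \ref{cor:semigrp-rep-liecone-curve} with the reasoning of Example \ref{ex:local-h-comp}(2), using a single element of the form $\exp(y)$ as the compact set witnessing local $H$-compatibility.

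Fix some $y \in \fq \cap \L^o(S)$, so that $\exp(ty) \in S$ for all $t > 0$. By Corollary \ref{cor:semigrp-rep-liecone-curve} applied to $y$, the subspace $\pi(\exp(y))\cH$ is dense, hence total, in $\cH$. Now set $C := \{\exp(y)\} \subset S$; since $S \subset G$ is open and $\exp(y) \in S$, there is an open neighborhood of $\exp(y)$ in $G$ contained in $S$. By continuity of the multiplication map $H \times G \to G$ at $(\1,\exp(y))$, we can pick a symmetric open $\1$-neighborhood $B \subset H$ such that $B \exp(y) \subset S$. This means $\exp(y) \in S_B$, and therefore
\[\pi(S_B)\cH \supset \pi(\exp(y))\cH,\]
which is dense in $\cH$. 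Thus $\pi$ satisfies the condition of Definition \ref{def:local-h-comp}, i.e.\ $(\pi,\cH)$ is locally $H$-compatible.

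There is no real obstacle here: the only slightly nontrivial input is Corollary \ref{cor:semigrp-rep-liecone-curve}, which in turn rests on Proposition \ref{prop:cont-op-semgrps}, and the rest is just the observation from Example \ref{ex:local-h-comp}(2) packaged as a proof.
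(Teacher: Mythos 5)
Your proof is correct and follows exactly the paper's route: the paper's own proof is a one-line appeal to Corollary \ref{cor:semigrp-rep-liecone-curve} and Example \ref{ex:local-h-comp}(3) (which in turn invokes the compact-set argument of Example \ref{ex:local-h-comp}(2)), and you have simply unpacked those references into an explicit argument. Nothing is missing.
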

\begin{proof}
  This is a direct consequence of Corollary \ref{cor:semigrp-rep-liecone-curve} and Example \ref{ex:local-h-comp}(3).
\end{proof}

The analytic continuation of the semigroup representation is already determined by the values of the semigroup representation on open sets in \(S\) with a non-empty intersection with \(\L^o(S) \cap \fq\):

\begin{prop}
  \label{prop:analytic-cont-undet}
  For \(k \in \{a,b\}\), let \((S_k,*) \subset G\) be open \(*\)-subsemigroups of \(G\). Let \((\pi_k,\cH)\) be continuous non-degenerate representations of \((S_k,*)\) and suppose that there exists \(y_0 \in \L^o(S_a) \cap \L^o(S_b) \cap \fq\) and a neighborhood \(V \subset S_a \cap S_b\) of \(\exp(y_0)\) such that \(\pi_a\lvert_V = \pi_b\lvert_V\). Then the analytic continuations \((\pi_k^c,\cH)\) of \(\pi_k\) to \(G^c\) obtained from {\rm Theorem \ref{thm:localrep}} coincide.
\end{prop}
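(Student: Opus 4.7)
The plan is to split the verification according to the decomposition underlying Theorem \ref{thm:localrep}: show separately that $\pi_a^c$ and $\pi_b^c$ coincide on the normal subgroup $G_1^c \subset G^c$ and on the image of the universal cover $q_{H^c}\colon \tilde H \to H^c$. Corollary \ref{cor:semigrp-rep-liecone}, applied using $y_0 \in \fq \cap \L^o(S_k)$, guarantees that both $\pi_k$ are locally $H$-compatible, so the analytic continuations $\pi_k^c$ produced by Theorem \ref{thm:localrep} exist.

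For agreement on $G_1^c$, I would prove $\partial\pi_a^c(iy) = \partial\pi_b^c(iy)$ for every $y \in \fq$; since $i\fq$ generates $\g_1^c$, this forces $\pi_a^c\lvert_{G_1^c} = \pi_b^c\lvert_{G_1^c}$ by the uniqueness clause of Theorem \ref{thm:si-integration}. Fix $y \in \fq$ and choose $\varepsilon > 0$ small enough that $\exp(ty)\exp(y_0) \in V$ for all $|t| < \varepsilon$, which is possible because $V$ is an open neighborhood of $\exp(y_0)$. Applying Theorem \ref{thm:localrep-analytic-cont}(a) to $\pi_k$ with $s := \exp(y_0) \in S_k$ gives
$$e^{-it\,\partial\pi_k^c(iy)}\pi_k(\exp(y_0)) = \pi_k(\exp(ty)\exp(y_0)) \quad \text{for } |t| < \varepsilon.$$
The right-hand side is independent of $k \in \{a,b\}$ because $\exp(ty)\exp(y_0) \in V$ and $\pi_a\lvert_V = \pi_b\lvert_V$. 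Corollary \ref{cor:semigrp-rep-liecone-curve} ensures that $\pi_k(\exp(y_0))\cH$ is dense in $\cH$, so the bounded operators $e^{-it\partial\pi_a^c(iy)}$ and $e^{-it\partial\pi_b^c(iy)}$ agree on a dense subspace, hence on all of $\cH$, for $|t| < \varepsilon$; the one-parameter group property then propagates this to all $t \in \R$.

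For the lifted representations $\pi_k^{\tilde H}$ of Proposition \ref{prop:localrep-comm-rel}, I would argue analogously. Choose a symmetric open $\1$-neighborhood $B \subset \tilde H$ small enough that $q_H(B)\exp(y_0) \subset V$; shrinking the $H$-compatibility neighborhood if necessary is harmless, since making $B$ smaller only enlarges $S_{q_H(B)}$ and therefore preserves totality of $\pi_k(S_{q_H(B)})\cH$. For $h \in B$, Proposition \ref{prop:localrep-comm-rel}(a) applied with $s = \exp(y_0)$ reads
$$\pi_k^{\tilde H}(h)\pi_k(\exp(y_0)) = \pi_k(q_H(h)\exp(y_0)),$$
and the right-hand side is again $k$-independent since $q_H(h)\exp(y_0) \in V$. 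Density of the range of $\pi_k(\exp(y_0))$ forces $\pi_a^{\tilde H}(h) = \pi_b^{\tilde H}(h)$ on $B$, and connectedness of $\tilde H$ upgrades this to equality on all of $\tilde H$. Combined with the previous paragraph, the uniqueness clause of Theorem \ref{thm:localrep} yields $\pi_a^c = \pi_b^c$.

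The main conceptual point, and the only potentially delicate step, is the realization that the single ``anchor element'' $\exp(y_0)$ — whose image has dense range by Corollary \ref{cor:semigrp-rep-liecone-curve} and on which $\pi_a$ and $\pi_b$ already coincide by hypothesis — suffices to pin down both analytic continuations globally. Once this is noticed, the rest is a rigidity argument exploiting the structure theorems already in place.
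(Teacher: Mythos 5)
Your overall strategy is the same as the paper's: anchor everything at $s=\exp(y_0)$, use that $\pi_a$ and $\pi_b$ agree on a neighborhood of $s$ and that $\pi_k(s)$ has dense range (Corollary \ref{cor:semigrp-rep-liecone-curve}), and check agreement of the continuations separately in the $i\fq$-directions and on the $H$-part. The $\tilde H$-part of your argument is fine, since there the operators $\pi_k^{\tilde H}(h)$ are unitary and a density argument legitimately upgrades agreement on $\pi_a(s)\cH$ to agreement on $\cH$.

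The gap is in the $i\fq$-step. For real $t$ the operator $e^{-it\,\p\pi_k^c(iy)}$ is \emph{not} bounded: writing $\p\pi_k^c(iy)=iA_k$ with $A_k$ selfadjoint, it equals $e^{tA_k}$, and for a general $y\in\fq$ (not lying in the cone $i B(I_{\pi_1^c})$) the operator $A_k$ is not semibounded, so $e^{tA_k}$ is an unbounded selfadjoint operator; only the composite $e^{tA_k}\pi_k(s)=\pi_k(\exp(ty)s)$ is bounded. Consequently, from $e^{tA_a}\pi_a(s)=e^{tA_b}\pi_a(s)$ you may not conclude that $e^{tA_a}=e^{tA_b}$ ``by density'': two closed unbounded operators agreeing on a dense subspace need not coincide unless that subspace is a common core, which you have not established. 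The correct repair is exactly the extra clause of Theorem \ref{thm:localrep-analytic-cont}(a) that you did not use: the curve $t\mapsto\pi_k(\exp(ty)s)$ is \emph{analytic} as a $B(\cH)$-valued curve, so the identity $e^{tA_a}\pi_a(s)=e^{tA_b}\pi_a(s)$ on $(-\varepsilon,\varepsilon)$ propagates to the analytic continuation at purely imaginary parameter, giving $\pi_a^c(\exp(ity))\pi_a(s)=\pi_b^c(\exp(ity))\pi_a(s)$ for all $t\in\R$. There the operators $\pi_k^c(\exp(ity))$ are unitary, the density of $\pi_a(s)\cH$ applies, and Stone's theorem yields $\p\pi_a^c(iy)=\p\pi_b^c(iy)$. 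With that substitution your proof matches the one in the paper.
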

\begin{proof}
  Since \(G^c\) is connected, it suffices to show that the one-parameter groups of \(\pi_a^c\) and \(\pi_b^c\) coincide.
  Set \(s := \exp(y_0)\). Let \(y \in \fq\) and choose \(\varepsilon > 0\) such that \(\exp(ty)s \in V\) for \(|t| < \varepsilon\). Then we have by Theorem \ref{thm:localrep-analytic-cont}
  \[e^{-it \p\pi^c_a(iy)}\pi_a(s) = \pi_a(\exp(ty)s) = \pi_b(\exp(ty)s) =e^{-it \p\pi^c_b(iy)} \pi_b(s) = e^{-it \p\pi^c_b(iy)} \pi_a(s)\]
  for \(|t| < \varepsilon\). Since this curve is analytic with respect to \(t\) (cf.\ Theorem \ref{thm:localrep-analytic-cont}), we have
  \[\pi_a^c(\exp(ity))\pi_a(s) = \pi_b^c(\exp(ity))\pi_b(s) = \pi_b^c(\exp(ity))\pi_a(s), \quad t \in \R.\]
  Now the density of the subspace \(\pi_a(s)\cH\) (cf.\ Corollary \ref{cor:semigrp-rep-liecone-curve}) implies that \(\pi_a^c(\exp(ity)) = \pi_b^c(\exp(ity))\) for all \(t \in \R\).

  Let now \(x \in \fh\) and let \(B \subset H\) be a symmetric open \(\1\)-neighborhood such that \(Bs \subset V\). Choose \(\varepsilon > 0\) such that \(\exp(tx) \in B\) for \(|t| < \varepsilon\). Then we have
  \[\pi_a^c(\exp(tx))\pi_a(s) = \pi_a(\exp(tx)s) = \pi_b(\exp(tx)s) = \pi_b^c(\exp(tx))\pi_b(s) = \pi_b^c(\exp(tx))\pi_a(s), \,\, |t| < \varepsilon.\]
  The density of \(\pi_a(s)\cH\) implies that \(\pi_a^c(\exp(tx)) = \pi_b^c(\exp(tx))\) for \(|t| < \varepsilon\). Thus the same holds for all \(t \in \R\). This shows \(\pi_a^c = \pi_b^c\).
\end{proof}

\section{Extensions to semigroup representations}
\label{sec:ext-semgrp-rep}

In the previous section, we have shown that strongly continuous non-degenerate semigroup representations of \(S\) have an analytic extension to \(G^c\) if there exists \(y \in \fq\) such that \(\exp(ty) \in S\) for all \(t > 0\). In this section, we show that the semigroup representation further extends to a representation of a certain generalization of an Olshanski semigroup if \(\L^o(S) \cap \fq\) has inner points.

\subsection{Invariant cones in Lie algebras and Olshanski semigroups}

We fix the following notation: Let \(V\) be a finite dimensional real vector space. A closed convex cone \(W \subset V\) is called a \emph{wedge}. We define \(H(W) := W \cap (-W)\) as the \emph{edge of the wedge} \(W\). We say that \(W\) is \emph{pointed} if \(H(W) = \{0\}\) and that it is \emph{generating} if \(W - W = V\). For a subset \(E \subset V\), we define \(B(E) := \{\omega \in V^* : \inf \omega(E) > -\infty\}\). Furthermore, we denote by \(E^o\) the interior of \(E\).

A wedge \(W \subset \g\) in a Lie algebra \(\g\) is called \emph{invariant} if \(e^{\ad x}W = W\) for all \(x \in \g\). Note that in this case, the subspaces \(H(W)\) and \(W - W\) are ideals in \(\g\).

The following example is especially important in the context of unitary representation theory: Let \((\pi,\cH)\) be a strongly continuous unitary representation of a connected Lie group \(G\) and let \(\bP^\infty := \bP(\cH^\infty)\) be the projective space of the space of smooth vectors of \(\pi\) (cf.\ Appendix \ref{sec:diffvectors}).
The \emph{convex momentum set \(I_\pi\) of \(\pi\)} is defined as the closed convex hull of the image of the map
\[\Phi: \bP^\infty \rightarrow \g^*, \quad \Phi([v])(x) := \frac{\la \dd \pi(x)v,v\ra}{i\la v, v \ra}.\]
By \cite[Lem.\ X.1.6]{Ne00}, we have \(B(I_\pi) = \{x \in \g : \sup \spec (i \p\pi (x)) < \infty\}\).
Moreover, \(B(I_\pi)\) is a convex \(\Ad(G)\)-invariant cone.
We define \(W_\pi := B(I_\pi)^o\).

\begin{definition}
  Let \(\g\) be a Lie algebra and let \(x \in \g\). Then \(x\) is called \emph{weakly elliptic} if \(\spec(\ad x) \subset i\R\). A subset \(W \subset \g\) is called \emph{weakly elliptic} if it consists of weakly elliptic elements.
  We say that \(x\) is \emph{weakly hyperbolic} if \(\spec(\ad x) \subset \R\) and we call a subset \(W \subset \g\) \emph{weakly hyperbolic} if it consists of weakly hyperbolic elements.
\end{definition}

\begin{examples}\label{ex:hyp-ell-sets}
  \begin{inlinelist}
  \item Let \((\pi,\cH)\) be a strongly continuous unitary representation of \(G\) with a discrete kernel. Then the convex cone \(B(I_\pi) \subset \g\) is weakly elliptic (cf.\ \cite[Rem.\ XI.2.4]{Ne00}).\\

  \item Let \(W \subset \g\) be a pointed \(e^{\ad \g}\)-invariant wedge. Then \(W\) is weakly elliptic (cf. \cite[p.\ 196]{HN93}).\\
  
  \item Let \(iW \subset \g^c\) be a pointed \(e^{\ad \g^c}\)-invariant wedge. Then \(C := W \cap \fq\) is weakly hyperbolic.
  \end{inlinelist}
\end{examples}

We recall some basic facts about tangent wedges of subsemigroups of Lie groups: For a closed subsemigroup \(S\) of a connected Lie group \(G\), we define the \emph{tangent wedge} of \(S\) by
\[\L(S) := \{x \in \L(G) : \exp(tx) \in S \text{ for all } t \geq 0\}.\]
For an \(\Ad(G)\)-invariant wedge \(W \subset \g\), we define \(S_W := \oline{\la \exp(W)\ra} \subset G\) as the closed subsemigroup generated by \(W\). We say that \(W\) is \emph{global} if \(\L(S_W) = W\) which is equivalent to \(W = \L(S)\) for a closed subsemigroup \(S \subset G\).

\begin{rem}
  \label{rem:lietanwed-closure}
  Let \(S \subset G\) be an open \(*\)-subsemigroup of the symmetric Lie group \((G,\tau)\). Then the closure \(\oline S\) of \(S\) is a closed \(*\)-subsemigroup of \(G\). In particular, \(\L(\oline S)\) is a closed convex \((-\L(\tau))\)-invariant cone. Suppose now that \(\L(\oline S)\) has interior points in \(\g = \L(G)\), i.e. \(\L(\oline S) - \L(\oline S) = \L(G)\). Then \(p(x) := \frac{1}{2}(x - \L(\tau)(x)), x \in \g,\) is a projection onto \(\fq\) and we have 
  \[p(\L(\oline S)) = \L(\oline S) \cap \fq \quad \text{and} \quad (\L(\oline S) \cap \fq)^o = p(\L(\oline S)^o) = \L(\oline S)^o \cap \fq\]
  (cf.\ \cite[Prop.\ 1.6]{HN93}). Furthermore, we even have \(\L(\oline S)^o \subset \L^o(S)\), which can be seen as follows: Since \(\1 \in \oline S\), we have \((\oline S)^o = S\) (cf.\ \cite[Lem.\ 3.7(ii)]{HN93}). Moreover, since the exponential function \(\exp: \g \rightarrow G\) of \(G\) is regular on a 0-neighborhood, we have \(\exp(\L(\oline S)^o) \subset S\) because \(\L(\oline S)^o\) is a cone and \(S = (\oline S)^o\) is a semigroup ideal of \(\oline S\) (cf.\ \cite[Lem.\ 3.7(i)]{HN93}). In particular, \(\L(\oline S)^o \cap \fq \subset \L^o(S) \cap \fq\), so that all strongly continuous non-degenerate \(*\)-representations of \(S\) satisfy the local \(H\)-compatibility condition which is needed for the Analytic Continuation Theorem \ref{thm:localrep-analytic-cont} (cf.\ Corollary \ref{cor:semigrp-rep-liecone}).
\end{rem}

\begin{thm}{\rm (Lawson's Theorem on Olshanski Semigroups, \cite[Thm.\ XI.1.10]{Ne00})}
  \label{thm:Lawson}
  Let \((G,\tau)\) be a 1-connected symmetric Lie group, \(\g = \fh \oplus \fq\) be the associated symmetric Lie algebra, and \(C \subset \fq\) be an \(e^{\ad \fh}\)-invariant weakly hyperbolic closed convex cone. Then the set \(\Gamma(C) := G^\tau \exp(C)\) is a connected closed subsemigroup of \(G\) for which the polar map
  \begin{equation}
    \label{eq:olsh-polar}
  G^\tau \times C \rightarrow \Gamma(C), \quad (h,x) \mapsto h\exp(x),
\end{equation}
  is a homeomorphism.
\end{thm}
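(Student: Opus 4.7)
The plan is to establish four separate assertions: that $\Gamma(C) = G^\tau\exp(C)$ is connected, closed under multiplication, closed in $G$, and that the polar map $\Phi: G^\tau \times C \to \Gamma(C)$, $(h,x) \mapsto h\exp(x)$, is a homeomorphism. Connectedness is immediate since $G$ being $1$-connected forces $G^\tau$ to be connected, $C$ is convex, and $\Phi$ is continuous.

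The first substantial step is injectivity of $\Phi$. The key identity is that for $s = h\exp(x) \in \Gamma(C)$ one computes $s^*s = \exp(x)h^{-1}h\exp(x) = \exp(2x)$, using $h^* = h^{-1}$ (since $h \in G^\tau$) and $\exp(x)^* = \exp(x)$ (since $x \in \fq$). Thus $x$ and then $h = s\exp(-x)$ can be reconstructed from $s$, provided $\exp|_C$ is injective. The latter follows from weak hyperbolicity: since $\ad x$ has real spectrum for every $x \in C$, the differential $d\exp_x$ factors through the invertible operator $\frac{1 - e^{-\ad x}}{\ad x}$, so $\exp|_C$ is a local diffeomorphism at every point, and global injectivity then follows from the uniqueness of the real logarithm of $\Ad(\exp(x)) = e^{\ad x}$ together with a short argument addressing the center of $G$. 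That the recovered $h$ lies in $G^\tau$ is checked by applying $\tau$ and using $s^*s = \exp(2x)$.

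The main obstacle is the semigroup property. The identity $(h_1\exp(x_1))(h_2\exp(x_2)) = (h_1 h_2)\exp(\Ad(h_2^{-1})x_1)\exp(x_2)$, together with the $e^{\ad\fh}$-invariance of $C$, reduces the task to showing $\exp(C)\exp(C) \subset \Gamma(C)$. My approach would be to pick a maximal abelian subspace $\fa \subset \fq$ meeting $C^o$ (which exists by weak hyperbolicity), carry out the root-space decomposition $\g = \g_0 \oplus \bigoplus_\alpha \g_\alpha$ with respect to $\ad\fa$, and reduce the general case to $\fsl_2$-type computations in each root space, making essential use of both weak hyperbolicity and $\Ad(G^\tau)$-invariance of $C$. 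A complementary route is a Trotter-product argument showing that sufficiently small products $\exp(x)\exp(y)$ admit a polar decomposition via a local perturbation, bootstrapping to arbitrary products by iteration and density.

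Once the semigroup property is in place, the homeomorphism claim reduces to showing $\Phi$ is a local diffeomorphism: its derivative at $(h,x)$ factors through the identity on $\fh$ and the invertible operator $\frac{1-e^{-\ad x}}{\ad x}$ on $\fq$, hence is bijective. Closedness of $\Gamma(C)$ then follows from a properness argument: if $s_n = h_n\exp(x_n) \to g$ in $G$, then $s_n^*s_n = \exp(2x_n) \to g^*g$, and closedness of $C$ combined with $\exp|_C$ being a homeomorphism onto its image forces $x_n \to x \in C$ with $g^*g = \exp(2x)$. Consequently $h_n = s_n\exp(-x_n)$ converges to an element that must lie in $G^\tau$ by continuity of $\tau$, so $g = h\exp(x) \in \Gamma(C)$. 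The decisive difficulty is the semigroup property; injectivity, the local-diffeomorphism property, and closedness all follow fairly directly from the polar identity $s^*s = \exp(2x)$ and weak hyperbolicity once multiplication is known to preserve $\Gamma(C)$.
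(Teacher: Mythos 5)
The paper does not prove this statement at all: it is quoted as Lawson's Theorem directly from \cite[Thm.\ XI.1.10]{Ne00} and used as a black box, so there is no internal proof to compare yours against. Judged on its own terms, your outline handles the accessible parts plausibly --- the polar identity $s^*s=\exp(2x)$, and the way injectivity of the polar map, its local-diffeomorphism property, and closedness of $\Gamma(C)$ all hang off that identity together with the invertibility of $\tfrac{1-e^{-\ad x}}{\ad x}$ for weakly hyperbolic $x$ --- but it contains no proof of the one assertion you yourself call decisive, namely that $G^\tau\exp(C)$ is closed under multiplication. (Even for the easier parts, the injectivity and properness of $\exp$ on the weakly hyperbolic set $C$ in a $1$-connected group is a lemma in its own right, not a formal consequence of local injectivity plus ``a short argument addressing the center''.)

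Both routes you propose for the semigroup property face genuine obstructions. The reduction to a maximal abelian subspace $\fa\subset\fq$ with an $\fsl_2$-by-$\fsl_2$ analysis presupposes something like $C=\Ad(G^\tau)(C\cap\fa)$, i.e.\ a Kostant-type convexity statement; this is available for reductive symmetric pairs, but the theorem is asserted for arbitrary $1$-connected symmetric Lie groups, where no such $\fa$ need control $C$. The Trotter/bootstrapping route runs head-on into the globality problem for Lie wedges: the local semigroup generated by $\exp$ of the wedge $\fh+C$ does in general \emph{not} extend to a global subsemigroup (non-global Lie wedges exist in abundance), and ``iterating small products'' is exactly the step that fails without a global input. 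In the actual proofs (Lawson's, and the treatments in \cite{Ne00} and \cite{HN93}, Ch.\ 7) that input is the globality of $\fh+C$, established via an invariant causal order on $G/G^\tau$, and this is precisely where weak hyperbolicity enters in an essential, non-formal way. Your phrase ``bootstrapping to arbitrary products by iteration and density'' papers over the entire content of the theorem, so the proposal as it stands is a plan with a hole at its center rather than a proof.
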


The subsemigroup \(\Gamma(C)\) is called an \emph{Olshanski semigroup}.
If \(\g\) is a complex Lie algebra and \(\fq = i\fh\), we call \(\Gamma(C)\) a \emph{complex Olshanski semigroup}.
The semigroup \(\Gamma(C)\) is a Lie subsemigroup of \(G\) with \(\L(\Gamma(C)) = \fh + C\) (cf.\ \cite[Cor.\ 7.35]{HN93}).
Hence, there exists a universal covering \(q: \tilde \Gamma(C) \rightarrow \Gamma(C)\) such that \(q\) is a homomorphism of topological monoids.
By lifting the polar decomposition \eqref{eq:olsh-polar}, we see that \(\tilde \Gamma(C)\) is homeomorphic to \(\tilde G^\tau \times C\), where \(\tilde G^\tau\) is the universal covering group of \(G^\tau\).
If \(H\) is a connected Lie group with Lie algebra \(\fh\), then there exists a discrete central subgroup \(D \subset \tilde G^\tau\) such that \(H \cong \tilde G^\tau/D\), and \(D\) is a discrete central subgroup of \(\tilde \Gamma(C)\).
We define \(\Gamma_H(C) := \tilde \Gamma(C)/D\).

For our purposes, we need a more general version of an Olshanski semigroup.
Thus, we drop the condition that \(C\) is weakly hyperbolic (respectively weakly elliptic in the complex case) and only assume that it is an \(e^{\ad \fh}\)-invariant wedge.

We look at the complex case first.

\begin{theorem}
  \label{thm:Ol-cplx}
  Let \(G\) be a 1-connected Lie group with Lie algebra \(\g\) and let \(\eta : G \rightarrow G_\C\) be its universal complexification. 
  Let \(W \subset \g\) be an \(e^{\ad \g}\)-invariant wedge and let \(\fn := H(W)\).
 Then the following holds:
 \begin{enumerate}
   \item The wedge \(\g + iW\) is global in \(G_\C\) and \(\Gamma_G(W) := \oline{\la \exp_{G_\C}(\g + iW)\ra} = N_\C\eta(G)\exp(iW) \subset G_\C\), where \(N_\C\) is the integral subgroup of \(G_\C\) with \(\L(N_\C) = \fn_{\C}\).
   \item The quotient map \(q: G_\C \rightarrow G_\C/N_\C\) maps \(\Gamma_G(W)\) onto the complex Olshanski semigroup \(\Gamma_Q(W')\), where \(Q\) is the integral subgroup of \(G_\C/N_\C\) with Lie algebra \(\g/\fn\) and \(W' = W/\fn\).
   \item The unit group \(\Gamma_G(W)^\times\) is given by \(N_\C\eta(G)\).
   \item For every closed convex cone \(W_1 \subset W\) with \(W = W_1 \oplus \fn\), the map
 \[p : W_1 \times \Gamma_G(W)^\times \rightarrow \Gamma_G(W), \quad (x,g) \mapsto \exp_{G_\C}(ix)g,\]
 is a homeomorphism.
 \end{enumerate}
\end{theorem}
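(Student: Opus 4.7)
The plan is to reduce the statement to the classical complex Olshanski setting by quotienting by $N_\C$. Because $W$ is $e^{\ad\g}$-invariant, its edge $\fn = H(W)$ is an ideal in $\g$; hence $\fn_\C = \fn + i\fn$ is an ideal in $\g_\C$ and the integral subgroup $N_\C \subset G_\C$ is a closed connected normal Lie subgroup. Let $q : G_\C \to G_\C/N_\C$ be the quotient homomorphism and let $Q \subset G_\C/N_\C$ be the integral subgroup with Lie algebra $\g/\fn$, i.e.\ the image of $\eta(G)$. The induced wedge $W' := W/\fn$ is pointed and $e^{\ad(\g/\fn)}$-invariant, hence weakly elliptic by Examples \ref{ex:hyp-ell-sets}(2). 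Applying the complex case of Theorem \ref{thm:Lawson} in $G_\C/N_\C$ then produces the complex Olshanski semigroup $\Gamma_Q(W') = Q\exp(iW')$, together with the polar homeomorphism $Q \times W' \to \Gamma_Q(W')$.

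Next I would prove (a) and (b) simultaneously by showing that $\tilde\Gamma := N_\C\eta(G)\exp_{G_\C}(iW)$ equals both $\Gamma_G(W)$ and $q^{-1}(\Gamma_Q(W'))$. The normality of $N_\C$ together with the $\ad$-invariance of $W$ ensures that $\tilde\Gamma$ is a subsemigroup; the identity $\tilde\Gamma = q^{-1}(\Gamma_Q(W'))$ follows from $q^{-1}(Q) = N_\C\eta(G)$ and from the surjectivity of $\exp_{G_\C}(iW) \to \exp(iW')$ with $N_\C$-coset fibers (since $i\fn \subseteq iW$). Closedness of $\tilde\Gamma$ is then inherited from $\Gamma_Q(W')$. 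Since $G$ is 1-connected one has $\eta(G) = \langle\exp_{G_\C}(\g)\rangle$, and $N_\C = \langle\exp_{G_\C}(\fn_\C)\rangle \subseteq \langle\exp_{G_\C}(\g + iW)\rangle$ because $\fn \subseteq \g$ and $i\fn \subseteq iW$; hence $\tilde\Gamma \subseteq \Gamma_G(W)$, while BCH on a neighborhood of $0$ gives $\exp_{G_\C}(\g + iW) \subseteq \tilde\Gamma$, so $\Gamma_G(W) = \tilde\Gamma$. Globality of $\g + iW$ would then follow from $\L(\Gamma_Q(W')) = \g/\fn + iW'$ (classical Lawson) combined with $\L(N_\C) = \fn_\C \subseteq \g + iW$.

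For property (c), I would note that $N_\C\eta(G)$ is a subgroup inside $\Gamma_G(W)$, so $N_\C\eta(G) \subseteq \Gamma_G(W)^\times$; conversely, $g \in \Gamma_G(W)^\times$ implies $q(g) \in \Gamma_Q(W')^\times = Q$ (the latter by pointedness of $W'$ and Lawson), whence $g \in q^{-1}(Q) = N_\C\eta(G)$. For (d), I would fix the splitting $W = W_1 \oplus \fn$ and observe that the projection $W_1 \to W/\fn = W'$ is a linear isomorphism onto the pointed cone $W'$, so the polar map $p : W_1 \times \Gamma_G(W)^\times \to \Gamma_G(W)$ sits over the Lawson polar homeomorphism $p' : W' \times Q \to \Gamma_Q(W')$ via $q$. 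Injectivity of $p$ then follows from a diagram chase: if $\exp_{G_\C}(ix_1)g_1 = \exp_{G_\C}(ix_2)g_2$, applying $q$ and using injectivity of $p'$ forces $x_1 - x_2 \in \fn$, hence $x_1 = x_2$ by $W_1 \cap \fn = \{0\}$, and then $g_1 = g_2$. Surjectivity follows from (a) by decomposing $iw = iw_1 + iw_0$ with $w_1 \in W_1$, $w_0 \in \fn$ and rewriting $\exp_{G_\C}(iw) = \exp_{G_\C}(iw_1)\cdot n(w_1, w_0)$ with $n(w_1, w_0) \in N_\C$ depending continuously on the data.

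The main obstacle is producing this continuous $N_\C$-valued factor $n(w_1, w_0)$: because $\exp_{G_\C}$ is not a homomorphism on $i\g$, one cannot factor the exponential directly. The resolution is that $\fn$ is an ideal, so $\Ad(\exp_{G_\C}(-iw_1))$ preserves $\fn_\C$; consequently $\exp_{G_\C}(-iw_1)\exp_{G_\C}(iw_1 + iw_0)$ lies in $N_\C$, first by BCH on a neighborhood of the origin and then globally by analytic continuation along rays in $W$, exploiting the $e^{\ad}$-invariance of $W$. Continuity of $p^{-1}$ then follows from continuity of $(p')^{-1}$ combined with continuity of $q$ on sections, finishing the proof of (d).
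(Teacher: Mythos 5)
Your overall strategy is the same as the paper's: quotient by $N_\C$, observe that $W'=W/\fn$ is pointed and invariant, hence weakly elliptic, apply the classical complex Olshanski/Lawson theory in $G_\C/N_\C$, and pull everything (unit group, polar decomposition, continuity of $p^{-1}$) back through $q$. Parts (b), (c) and (d) of your argument are essentially identical to the paper's; for (d) the paper also recovers the $W_1$-component via the composite $\tilde p_2\circ\tilde p\circ q$ and writes $p^{-1}(s)=(\varphi(s),\exp(-i\varphi(s))s)$, exactly your ``diagram chase over the Lawson polar map.'' The only structural difference is in (a): the paper simply cites \cite[Cor.\ 7.36]{HN93} for the globality of $\g+iW$ and the identity $\Gamma_G(W)=N_\C\eta(G)\exp(iW)$, whereas you try to rederive this from the quotient picture. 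That is a legitimate and arguably more self-contained route, but your justification of the key inclusion $\exp_{G_\C}(\g+iW)\subseteq\tilde\Gamma$ by ``BCH on a neighborhood of $0$'' does not work: BCH gives no reason why the $i\g$-component of a local factorization of $\exp(x+iw)$ should land in the cone $W$; this is precisely the nontrivial local-to-global content of Lie semigroup theory. The correct argument is already implicit in your own setup: since $\L(\Gamma_Q(W'))=\g/\fn+iW'$, one has $q(\exp_{G_\C}(x+iw))=\exp(\,\dd q(x+iw))\in\Gamma_Q(W')$, hence $\exp_{G_\C}(x+iw)\in q^{-1}(\Gamma_Q(W'))=\tilde\Gamma$; globality of $\g+iW$ then follows from $\L(q^{-1}(\Gamma_Q(W')))=(\dd q)^{-1}(\g/\fn+iW')=\g+iW$ (using $i\fn\subseteq iW$). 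You should replace the BCH appeal by this. Similarly, in (d) your production of the correction factor $n(w_1,w_0)$ via ``BCH and analytic continuation along rays'' is unnecessary: $\exp_{G_\C}(-iw_1)\exp_{G_\C}(i(w_1+w_0))$ lies in $\ker q=N_\C$ simply because its image under $q$ is $\exp(-i(w_1+\fn))\exp(i(w_1+\fn))=e$. With these two repairs your proof is complete and matches the paper's in substance.
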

\begin{proof}
The subspace \(\fn = W \cap -W\) is an ideal in \(\g\) and \(\fn_\C\) is an ideal in \(\g_\C\).
We also note that the universal complexification \(G_\C\) of \(G\) is 1-connected by \cite[Thm.\ 15.1.4]{HN12}.
Moreover, the integral subgroup \(N_\C\) of \(G_\C\) with Lie algebra \(\fn_\C\) is normal, hence 1-connected, and \(G_\C/N_\C\) is 1-connected as well by \cite[Ch.\ XII, Thm.\ 1.2]{Ho65}.

By \cite[Cor.\ 7.36]{HN93}, the wedge \(\g + iW\) is global in \(G_\C\) and \(\Gamma_G(W) = N_\C\eta(G)\exp(iW)\), which proves (a).
In particular, the quotient map \(q: G_\C \rightarrow G_\C/N_\C\) maps \(\Gamma_G(W)\) onto the Olshanski semigroup \(\Gamma_Q(W') \subset G_\C/N_\C\), where \(W' = W/\fn\) and \(Q\) is the integral subgroup of \(G_\C/N_\C\) with Lie algebra \(\g/\fn\).
Since the unit group of \(\Gamma_Q(W')\) is given by \(Q\exp(iW' \cap -iW') = Q\) (cf.\ \cite[Thm.\ XI.1.12]{Ne00}), we have \(\Gamma_G(W)^\times = q^{-1}(Q) = N_\C\eta(G)\).

It remains to show that \(p\) is a homeomorphism.
In view of \(q(\exp(iW)) \subset \exp(iW_1)N_\C\), we have
\[q^{-1}(\Gamma_Q(W')) = \Gamma_G(W) \subset N_\C\eta(G)\exp(iW_1) = \exp(iW_1)N_\C\eta(G),\]
hence \(p\) is surjective. 
Consider \(x,y \in W_1\) and \(g,h \in \eta(G)N_\C\) such that \(\exp(ix)g = \exp(iy)h\).
Then we have \(q(\exp(ix)g) = \exp(ix)gN_\C = \exp(iy)hN_\C\).
By using the polar decomposition of \(\Gamma_Q(W')\) (cf.\ Theorem \ref{thm:Lawson}), we see that this implies \(x + \fn = y + \fn\), i.e., \(x = y\) since \(x,y \in W_1\). Hence, we also have \(g = h\), which implies that \(p\) is injective.

It remains to show that \(p^{-1}\) is continuous. Identify \(W'\) with \(W_1\) and let \(\tilde p : \Gamma_Q(W') \rightarrow Q \times W_1\) be the polar decomposition of \(\Gamma_Q(W')\).
Let \(\varphi := \tilde p_2 \circ \tilde p \circ q: \Gamma_G(W) \rightarrow W_1\), where \(\tilde p_2 :  Q \times W_1 \rightarrow W_1\) is the projection onto the second component. Then, for all \(s = \exp_{G_\C}(ix)g \in \Gamma_G(W)\), where \(x \in W_1\), we have \(x = \varphi(s)\). Hence,
\[p^{-1} : \Gamma_G(W) \rightarrow W_1 \times \Gamma_G(W)^\times,\quad s \mapsto (\varphi(s),\exp_{G_\C}(-i\varphi(s))s),\]
is continuous and thus \(p\) is a homeomorphism.
\end{proof}

\begin{prop}
  \label{prop:Ol-homom}
  Let \(\g_1,\g_2\) be real Lie algebras and let \(W_j \subset \g_j\)(\(j=1,2\)) be a \(e^{\ad \g_j}\)-invariant wedge.
  Let \(\gamma : \g_1 \rightarrow \g_2\) be a homomorphism of Lie algebras with \(\gamma(W_1) \subset W_2\) and let \(\tilde\gamma : G_1 \rightarrow G_2\) be the corresponding homomorphism of 1-connected Lie groups.
  Furthermore, let \(\eta_{G_j}: G_j \rightarrow G_{j,\C}\) be the universal complexification of \(G_j\) (\(j=1,2\)).
  Then there exists a unique homomorphism \(\varphi : \Gamma_{G_1}(W_1) \rightarrow \Gamma_{G_2}(W_2)\) such that
  \begin{equation}
    \label{eq:Ol-homom}
    \varphi(\eta_{G_1}(g)\exp(ix)) = \eta_{G_2}(\tilde\gamma(g))\exp(i\gamma(x)), \quad \text{for } g \in G_1, x \in W_1.
  \end{equation}
  If \(W_1\) is generating, then \(\varphi\) is holomorphic on the interior of \(\Gamma_{G_1}(W_1)\).
\end{prop}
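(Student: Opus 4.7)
The plan is to realize $\varphi$ as the restriction to $\Gamma_{G_1}(W_1)$ of a holomorphic homomorphism between the universal complexifications. The complexified map $\gamma_\C \colon \g_{1,\C} \to \g_{2,\C}$ is a homomorphism of complex Lie algebras, and since $G_{1,\C}$ is $1$-connected by \cite[Thm.\ 15.1.4]{HN12}, $\gamma_\C$ integrates to a unique holomorphic group homomorphism $\tilde\gamma_\C \colon G_{1,\C} \to G_{2,\C}$. By the universal property of $\eta_{G_1}$, this map satisfies $\tilde\gamma_\C \circ \eta_{G_1} = \eta_{G_2} \circ \tilde\gamma$.

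Next, I would verify that $\tilde\gamma_\C$ sends $\Gamma_{G_1}(W_1)$ into $\Gamma_{G_2}(W_2)$, using the description $\Gamma_{G_j}(W_j) = N_{j,\C}\eta_{G_j}(G_j)\exp(iW_j)$ from Theorem \ref{thm:Ol-cplx}(a). For the exponential factor, $\tilde\gamma_\C(\exp(ix)) = \exp(i\gamma(x)) \in \exp(iW_2)$ because $\gamma(W_1) \subset W_2$. For the $\eta$-factor, $\tilde\gamma_\C(\eta_{G_1}(g)) = \eta_{G_2}(\tilde\gamma(g))$ by the intertwining relation above. For the $N$-factor, the inclusion $\gamma(\pm W_1) \subset \pm W_2$ yields $\gamma(H(W_1)) \subset H(W_2)$, hence $\gamma_\C(\fn_{1,\C}) \subset \fn_{2,\C}$ and therefore $\tilde\gamma_\C(N_{1,\C}) \subset N_{2,\C}$. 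Defining $\varphi := \tilde\gamma_\C\vert_{\Gamma_{G_1}(W_1)}$ then produces a semigroup homomorphism satisfying \eqref{eq:Ol-homom} by construction.

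For uniqueness, I would argue that any homomorphism $\varphi' \colon \Gamma_{G_1}(W_1) \to \Gamma_{G_2}(W_2)$ satisfying \eqref{eq:Ol-homom} is determined on the set $\eta_{G_1}(G_1)\exp(iW_1)$, and the products of such elements exhaust $\Gamma_{G_1}(W_1)$. Indeed, for any $a,b \in \fn_1 = H(W_1)$ one has $\exp(a+ib) = \lim_n (\exp(a/n)\exp(ib/n))^n$ (Trotter), and each factor lies in $\eta_{G_1}(G_1) \cdot \exp(iW_1)$; combined with the polar decomposition of Theorem \ref{thm:Ol-cplx}(a) and the continuity of $\varphi$ inherited from $\tilde\gamma_\C$, this pins down $\varphi$ uniquely. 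Finally, when $W_1$ is generating, $\g_1 + iW_1$ is generating in $\g_{1,\C}$, so the interior of $\Gamma_{G_1}(W_1)$ is open in the complex manifold $G_{1,\C}$, and the restriction of the holomorphic map $\tilde\gamma_\C$ to this open set is holomorphic.

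The main obstacle I expect is the uniqueness step: strictly speaking, \eqref{eq:Ol-homom} only prescribes $\varphi$ on $\eta_{G_1}(G_1)\exp(iW_1)$, and one must invoke continuity together with a Trotter-type density argument to propagate uniqueness to the full $N_{1,\C}$-factor. Everything else reduces to the universal property of $G_{1,\C}$ and bookkeeping with the polar form from Theorem \ref{thm:Ol-cplx}.
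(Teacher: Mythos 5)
Your proposal is correct and follows essentially the same route as the paper: existence by restricting the holomorphic homomorphism $G_{1,\C}\to G_{2,\C}$ obtained from the universal property of the complexification, uniqueness by checking agreement on the factors of the decomposition $\Gamma_{G_1}(W_1)=N_{1,\C}\,\eta_{G_1}(G_1)\exp(iW_1)$, and holomorphy by restricting to the open interior. Your extra verification that the image actually lands in $\Gamma_{G_2}(W_2)$ (via $\gamma(H(W_1))\subset H(W_2)$, hence $\tilde\gamma_\C(N_{1,\C})\subset N_{2,\C}$) is a detail the paper leaves implicit, and is welcome. The only real divergence is the $N_{1,\C}$-factor in the uniqueness step: the paper observes that $\fn_1=H(W_1)\subset W_1$ forces $\exp(\fn_1)\subset\eta_{G_1}(G_1)$ and $\exp(i\fn_1)\subset\exp(iW_1)$, so that $\L(\psi\lvert_{N_{1,\C}})=\L(\varphi\lvert_{N_{1,\C}})$ and the two homomorphisms agree on the connected group $N_{1,\C}$, whereas you reach the same conclusion by a Trotter-limit density argument. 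Both versions tacitly assume the competing homomorphism is continuous (the paper needs this to form $\L(\psi\lvert_{N_{1,\C}})$, you need it to pass to the Trotter limit); note that your phrasing ``continuity of $\varphi$ inherited from $\tilde\gamma_\C$'' points at the wrong map --- it is the competitor $\psi$ whose continuity is needed. If one wants to avoid this hypothesis altogether, observe that $\exp(\fn_1)\cup\exp(i\fn_1)$ consists of units and generates $N_{1,\C}$ as a group, so any semigroup homomorphism satisfying \eqref{eq:Ol-homom} is already determined there algebraically.
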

\begin{proof}
  By the universal property of the universal complexification of \(G_{1,\C}\), we obtain a unique holomorphic homomorphism \(\tilde\phi : G_{1,\C} \rightarrow G_{2,\C}\) such that \(\eta_{G_2} \circ \tilde\gamma = \tilde\phi \circ \eta_{G_1}\).
  Now we obtain \(\varphi\) by restricting \(\tilde\varphi\) to \(\Gamma_{G_1}(W_1)\).

  It remains to show that \(\varphi\) is unique.
  To this end, let \(\psi : \Gamma_{G_1}(W_1) \rightarrow \Gamma_{G_2}(W_2)\) be another homomorphism satisfying \eqref{eq:Ol-homom}.
  By Theorem \ref{thm:Ol-cplx}, we have \(\Gamma_{G_1}(W_1) = N_\C\eta_{G_1}(G_1)\exp(iW)\), where \(N_\C\) is the integral subgroup of \(G_{1,\C}\) with Lie algebra \(\fn_\C = H(W)_\C\).
  Thus, \(\varphi\lvert_{\eta_{G_1}(G_1)} = \psi\lvert_{\eta_{G_1}(G_1)}\) and \(\varphi\lvert_{\exp(iW_1)} = \psi\lvert_{\exp(iW_1)}\) follow immediately. Since \(\fn = H(W_1) \subset W_1\), we also have \(\L(\psi\lvert_{N_\C}) = \L(\varphi\lvert_{N_\C})\). This shows \(\varphi\lvert_{N_\C} = \psi\lvert_{N_\C}\) because \(N_\C\) is connected.
\end{proof}

We now turn to the real case.

\begin{theorem}
  \label{thm:Ol-real}
  Let \((G,\tau)\) be a 1-connected Lie group with Lie algebra \(\g = \fh \oplus \fq\). Let \(iW \subset \g^c \subset \g_\C\) be an \(e^{\ad \g^c}\)-invariant wedge and set \(C := W \cap \fq\) and \(\fn := H(C)\).
  Then the following holds:
  \begin{enumerate}
    \item The wedge \(C\) is \(e^{\ad \fh}\)-invariant and \(V := \fh + C\) is global in \(\g\). 
    \item The Lie subsemigroup \(\Gamma(C) := \oline{\la \exp_G(\fh + C) \ra}\) is \(*\)-invariant and \(\Gamma(C) := FH\exp(C)\), where \(F\) is the integral subgroup of \(G\) with \(\L(F) = [\fq,\fn] \oplus \fn\).
    \item The unit group \(\Gamma(C)^\times\) of \(\Gamma(C)\) is given by \(FH\).
    \item For any closed convex cone \(C_1 \subset \fq\) such that \(C = C_1 \oplus \fn\), the map
      \begin{equation}
        \label{eq:Ol-real-poldec}
        C_1 \times \Gamma(C)^\times \rightarrow \Gamma(C), \quad (y,g) \mapsto \exp(y)g,
      \end{equation}
      is a homeomorphism.
  \end{enumerate}
\end{theorem}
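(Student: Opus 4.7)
The plan is to extend the strategy of the complex Olshanski Theorem \ref{thm:Ol-cplx} to the real setting, reducing to Lawson's Theorem \ref{thm:Lawson} once the cone $C$ has been made pointed by quotienting out its edge $\fn = H(C)$. The hypothesis that $iW$ is $e^{\ad \g^c}$-invariant --- stronger than $C$ being merely $e^{\ad \fh}$-invariant --- supplies the additional bracket relations needed to make this reduction go through.

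For (a), the identity $e^{\ad h}C = C$ for $h \in \fh$ is immediate since $\fh \subset \g^c$, $e^{\ad h}(iW) = iW$, and $\ad \fh$ preserves $\fq$. For globality of $V := \fh + C$ in $\g$, I would first check that $V$ is a Lie wedge by showing $\ad n \cdot V \subset V$ for $n \in H(V) = \fh + \fn$; the required bracket identities come from the ideal property of $H(iW) = iH(W)$ in $\g^c$, which gives $[i\fq, iH(W)] \subset iH(W)$ and hence analogous real identities involving $\fn$. Globality then follows by passing to a quotient where the image of $C$ is pointed and $\ad$-invariant, hence weakly hyperbolic by the analogue of Examples \ref{ex:hyp-ell-sets}(3) in the quotient, so that Lawson's Theorem applies and lifts.

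For (b) and (c), I would first verify that $\fm := [\fq,\fn] \oplus \fn$ is a Lie subalgebra of $\g$ normalized by $\fh$. The inclusion $[\fh,\fm] \subset \fm$ uses $[\fh,\fn] \subset \fn$ (from part (a)) together with Jacobi, while the non-obvious inclusion $[\fm,\fm] \subset \fm$ --- in particular $[[\fq,\fn],[\fq,\fn]] \subset [\fq,\fn]$ --- requires $\ad(i\fq)$-invariance of $iH(W)$, as encoded by $[\fq,H(W)] \subset iH(W)$. Hence $F$ is a well-defined integral subgroup of $G$ and $FH$ is a subgroup. By Theorem \ref{thm:Ol-cplx}(c) applied to $(G^c, iW)$, the unit group of $\Gamma_{G^c}(iW) \subset G_\C$ is $N_\C \eta^c(G^c)$; intersecting with $\eta(G) \subset G_\C$ and verifying the Lie algebra identity $H(iW)_\C \cap \g = \fm$ yields $\Gamma(C)^\times = FH$. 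The decomposition $\Gamma(C) = FH\exp(C)$ then follows by lifting the corresponding decomposition from the weakly hyperbolic quotient where Lawson applies.

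For (d), with $C_1 \subset \fq$ closed convex and complementary to $\fn$ in $C$, the polar homeomorphism $C_1 \times FH \to \Gamma(C)$ is obtained by combining the Lawson polar homeomorphism in the quotient with the $F$-factorization of $FH$. The main obstacle is the Lie algebra identification $H(iW)_\C \cap \g = \fm$ together with the Lie subalgebra property for $\fm$: both rely essentially on the full strength of the wedge invariance under $\ad \g^c$, not merely under $\ad \fh$, which is the key ingredient distinguishing this setting from Lawson's.
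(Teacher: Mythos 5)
Your overall architecture is the one the paper uses: show that \(\ff := [\fq,\fn] \oplus \fn\) is an ideal of \(\g\) via the relations \([\fh,\fn] \subset \fn\), \([\fq,\fn] \subset \fh \cap H(iW)\) and \([\fq,[\fq,\fn]] \subset \fn\) (all consequences of \(H(iW)\) being an ideal of \(\g^c\)), and then reduce to Lawson's Theorem in the quotient \(Q = G/F\), where \(C_Q = C/\fn\) is pointed and weakly hyperbolic. The one genuinely different choice is in (a): you obtain globality of \(V = \fh + C\) by lifting from the quotient, whereas the paper embeds \(\g\) into \(\g_\C^c\) and pulls back the wedge \(\g^c + W\), already known to be global from Theorem \ref{thm:Ol-cplx}, via \cite[Prop.\ 1.41]{HN93}. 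Both routes rest on the same preimage theorem for global wedges, but the paper's version yields globality without having to discuss weak hyperbolicity of the quotient cone at this stage.

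The concrete gap is in your step for (c). The identity \(H(iW)_\C \cap \g = \ff\) is false in general: one always has \(H(iW)_\C \cap \g \supseteq \ff\), but the containment can be strict; for instance, if \(iW\) is a nonzero ideal of \(\g^c\) contained in \(\fh\), then \(C = W \cap \fq = \{0\}\), hence \(\fn = \{0\}\) and \(\ff = \{0\}\), while \(H(iW)_\C \cap \g \supseteq iW \neq \{0\}\). What your intersection argument actually needs is \(\L\bigl(N_\C\,\eta^c(G^c)\bigr) \cap \g = \bigl(H(iW)_\C + \g^c\bigr) \cap \g = \fh \oplus \fn = \L(FH)\), which \emph{is} correct --- but even with the corrected identity, passing from this Lie algebra computation to \(\Gamma(C)^\times = \Gamma_{G^c}(iW)^\times \cap \eta(G)\) at the group level needs extra care, since \(\eta_G\) need not be injective and intersections of integral subgroups need not be connected. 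The paper avoids the complexification here entirely: it obtains (c) and (d) by rerunning the argument of Theorem \ref{thm:Ol-cplx} for the real quotient map \(q \colon G \to G/F\) onto the genuine Olshanski semigroup \(\Gamma(C_Q)\), whose unit group is \(H_Q\) by \cite[Thm.\ XI.1.12]{Ne00}, so that \(\Gamma(C)^\times = FH\) and the polar map \eqref{eq:Ol-real-poldec} is a homeomorphism. I would replace your step for (c) by that quotient argument; your treatment of (b) and (d) then goes through as written.
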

\begin{proof}
  It is clear that \(C\) is an \(e^{\ad \fh}\)-invariant wedge.
  In order to show that \(V\) is global, consider the inclusion map \(\iota^c : \fg \rightarrow \fg_\C^c\).
  Then \(\iota^c\) integrates to a homomorphism \(\tilde \iota^c : G \rightarrow G_\C^c\).
  Since the preimage of \(\g^c + W\), which is global in \(\g_\C^c\), under \(\iota^c\) is given by \(V\), we conclude with \cite[Prop.\ 1.41]{HN93} that \(V\) is global in \(\g\).
  Let \(\ff := [\fq,\fn] \oplus \fn \subset \g\).
  Then
  \[[\fh, \fn] = [\fh, H(W) \cap \fq] \subset \fn \quad \text{and} \quad [\fq, \fn] \subset \fh \cap H(iW), \quad [\fq, [\fq, \fn]] \subset \fn,\]
  implies that \(\ff\) is an ideal in \(\g\).
  The integral subgroup \(F \subset G\) with Lie algebra \(\ff\) is normal, hence it is 1-connected and closed, and the quotient Lie group \(Q := G/F\) is 1-connected as well (cf.\ \cite[Ch.\ XII, Thm.\ 1.2]{Ho65}).
  Let \(q: G \rightarrow Q\) be the quotient map.
  Then \(C_Q := \dd q(\1)(C) = C/\fn\) is weakly hyperbolic by Example \ref{ex:hyp-ell-sets}.
  Therefore, we obtain an Olshanski semigroup \(\Gamma(C_Q) \subset Q\) with \(q(\Gamma(C)) \subset \Gamma(C_Q) = H_Q\exp(C_Q)\), where \(H_Q = q(H)\). Thus, \(\Gamma(C) = FH\exp(C)\).

  By similar arguments as in the proof of Theorem \ref{thm:Ol-cplx}, we conclude that \(\Gamma(C)^\times = FH\) and that \eqref{eq:Ol-real-poldec} is a homeomorphism.
\end{proof}

We introduce the following notation: For an \(e^{\ad \g^c}\)-invariant wedge \(iW \subset \g_1^c\) and \(C := W \cap \fq\), we write \(\Gamma_1(C)\) for the semigroup we obtain from Theorem \ref{thm:Ol-real} when applied to the 1-connected Lie group \(G_1\) with Lie algebra \(\g_1 = [\fq,\fq] \oplus \fq\).

The semigroups \(\Gamma(C)\) and \(\Gamma_{G^c}(-iW)\) are related in the following way:
\begin{prop}
  \label{prop:Olsh-CW-homo}
  Let \((G,\tau)\) be a 1-connected Lie group with Lie algebra \(\g = \fh \oplus \fq\). Let \(iW \subset \g^c \subset \g_\C\) be an \(e^{\ad \g^c}\)-invariant wedge and let \(C := W \cap \fq\). Then there exists a continuous homomorphism \(\gamma : \Gamma(C) \rightarrow \Gamma_{G^c}(-iW)\) of \(*\)-semigroups with \(\gamma(C) \subset \Gamma_{G^c}(-iW) \cap \eta_G(G)\), where \(\eta_G : G \rightarrow G_\C^c\) is the universal complexification of \(G\), and which is equivariant with respect to the integrated adjoint action of the 1-connected Lie group of \(\tilde H\) with \(\L(\tilde H) = \fh\).
\end{prop}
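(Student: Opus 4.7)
The plan is to define $\gamma$ as the restriction of $\eta_G$ to $\Gamma(C)$, after canonically identifying $G_\C$ with $G_\C^c$. This identification is available because $G$ and $G^c$ are both 1-connected with the same complexified Lie algebra $\g_\C = \g_\C^c$, so the universal complexifications $G_\C$ and $G_\C^c$, each being 1-connected, must be isomorphic; under this identification, both $\eta_G$ and $\eta_{G^c}$ are morphisms into the same complex Lie group with differentials the inclusions of $\g$ resp.\ $\g^c$ into $\g_\C$. Continuity of $\gamma$ is inherited from $\eta_G$; the work is to verify $\eta_G(\Gamma(C)) \subset \Gamma_{G^c}(-iW)$, the $*$-compatibility, and the $\tilde H$-equivariance.

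For the image inclusion I would use the factorization $\Gamma(C) = FH\exp_G(C)$ from Theorem \ref{thm:Ol-real}(b). For $c \in C \subset W$, $\eta_G(\exp_G(c)) = \exp_{G_\C^c}(c) \in \exp_{G_\C^c}(\g^c + W) \subset \Gamma_{G^c}(-iW)$, which simultaneously verifies the side condition $\gamma(C) \subset \eta_G(G) \cap \Gamma_{G^c}(-iW)$. The image $\eta_G(H)$ is the integral subgroup of $G_\C^c$ with Lie algebra $\fh$, and therefore coincides with $\eta_{G^c}(H^c) \subset \eta_{G^c}(G^c) \subset \Gamma_{G^c}(-iW)^\times$ by Theorem \ref{thm:Ol-cplx}(c). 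The key structural step is $\eta_G(F) \subset N_\C$, which reduces to $\ff = [\fq,\fn] \oplus \fn \subset H(iW)_\C$: for $y \in \fn = H(C)$ we have $iy \in H(iW)$ and so $y = -i(iy) \in H(iW)_\C$; for $y_1 \in \fq$ and $y_2 \in \fn$, using $iy_1, iy_2 \in \g^c$, $iy_2 \in H(iW)$, and that $H(iW)$ is an ideal of $\g^c$ (because $iW$ is $e^{\ad \g^c}$-invariant), one obtains $-[y_1,y_2] = [iy_1, iy_2] \in H(iW)$. The three pieces combine with $\Gamma_{G^c}(-iW)^\times \cdot \Gamma_{G^c}(-iW) \subset \Gamma_{G^c}(-iW)$.

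The remaining two properties follow by uniqueness arguments for Lie group morphisms out of connected groups. The involution on $\Gamma_{G^c}(-iW) \subset G_\C^c$ is $s^* = \sigma(s)^{-1}$, where $\sigma$ is the antiholomorphic involution of $G_\C^c$ fixing $\eta_{G^c}(G^c)$ pointwise; its Lie algebra version satisfies $\L(\sigma)(u + iv) = u - iv$ for $u, v \in \g^c$. Writing $h + y \in \fh \oplus \fq$ as $h + i(-iy)$ with $h, -iy \in \g^c$ gives $\L(\sigma)(h+y) = h - y = \L(\tau)(h+y)$, hence $\sigma \circ \eta_G = \eta_G \circ \tau$ by connectedness of $G$, which yields $\gamma(g^*) = \gamma(g)^*$. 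For $\tilde H$-equivariance, the two maps $\eta_G \circ \alpha$ and $\eta_{G^c} \circ \beta$ from $\tilde H$ to $G_\C^c$ (with $\alpha : \tilde H \to G$ and $\beta : \tilde H \to G^c$ the canonical lifts of the inclusions $\fh \hookrightarrow \g, \fh \hookrightarrow \g^c$) have the same differential and therefore agree on the connected group $\tilde H$, so $\eta_G$ intertwines the two conjugation actions. The most delicate bookkeeping will be the verification $\ff \subset H(iW)_\C$ together with the identification of $\eta_G(H)$ with $\eta_{G^c}(H^c)$ as the same integral subgroup of $G_\C^c$; the rest reduces to standard connectedness-uniqueness arguments.
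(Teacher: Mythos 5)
Your proposal is correct and follows essentially the same route as the paper, which simply integrates the inclusion \(\g \hookrightarrow \g_\C^c\) to the universal complexification and restricts to \(\Gamma(C)\), with the equivariance coming from the \(e^{\ad\fh}\)-invariance of \(C\). You have merely filled in the details the paper leaves implicit (the image inclusion via \(\Gamma(C)=FH\exp(C)\) and \(\ff\subset H(iW)_\C\), the compatibility of \(\L(\sigma)\) with \(\L(\tau)\), and the connectedness-uniqueness arguments), and these verifications are all sound.
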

\begin{proof}
  We obtain \(\gamma\) by integration of the inclusion map \(\g \hookrightarrow \g_\C^c\). The equivariance then follows from the \(e^{\ad \fh}\)-invariance of \(C\).
\end{proof}

\begin{lem}
  \label{lem:semgrp-dense-int-eq}
  Let \(G\) be a Lie group and \(S,T \subset G\) be open subsemigroups of \(G\) with \(T \subset S\). If \(T\) is dense in \(S\) and \(\1 \in \oline{S} = \oline T\), then \(S = T\).
\end{lem}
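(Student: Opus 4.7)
The plan is to identify both $S$ and $T$ with the interior of their common closure, applying twice the fact---already used implicitly in Remark~\ref{rem:lietanwed-closure} and cited there from \cite[Lem.~3.7(ii)]{HN93}---that an open subsemigroup $U$ of a Lie group satisfying $\1\in\oline{U}$ coincides with $(\oline{U})^o$.

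First I observe that there is nothing to do to reconcile $\oline{S}$ and $\oline{T}$: the identity $\oline{S}=\oline{T}$ is part of the hypothesis (and, in any case, follows from $T\subset S\subset\oline{T}$, where the second inclusion is the density of $T$ in $S$). Second, both $S$ and $T$ satisfy the premise of the quoted lemma: each is an open subsemigroup of $G$, and by assumption $\1$ lies in the common closure $\oline{S}=\oline{T}$. The cited lemma thus yields $(\oline{S})^o = S$ on one hand and $(\oline{T})^o = T$ on the other.

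Combining these three identities gives
\[
  S \;=\; (\oline{S})^o \;=\; (\oline{T})^o \;=\; T,
\]
which is the desired equality.

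The one ingredient that carries content is the background fact $(\oline{U})^o=U$; the present lemma is just its double application. The intuition behind that fact is that an open subsemigroup $U$ is a two-sided semigroup ideal in $\oline{U}$ which meets every neighborhood of $\1$, so that any point of $(\oline{U})^o$ can be shifted into $U$ by an element of $U$ sufficiently close to $\1$, forcing $(\oline{U})^o\subset U$; the reverse inclusion is immediate because $U$ is open and contained in $\oline{U}$. Since the author has already appealed to this result earlier, no further effort is needed beyond stating the two-line chain of equalities above.
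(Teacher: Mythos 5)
Your proof is correct, but it takes a genuinely different route from the paper's. You reduce the statement to two applications of the fact that an open subsemigroup \(U\) of a Lie group with \(\1 \in \oline{U}\) satisfies \(U = (\oline{U})^o\) --- the result from \cite[Lem.\ 3.7(ii)]{HN93} that the paper itself invokes in Remark~\ref{rem:lietanwed-closure} --- applied once to \(S\) and once to \(T\), combined with the (hypothesized, and in any case automatic) identity \(\oline{S} = \oline{T}\). The paper instead gives a short self-contained argument that makes no appeal to \cite{HN93}: for \(s \in S\) and an open neighborhood \(U\) of \(s\) contained in \(S\), the condition \(\1 \in \oline{T}\) yields some \(t \in T\) with \(t^{-1}s \in U\); since \(T^{-1}s \cap U\) is then a nonempty open subset of \(S\) and \(T\) is dense in \(S\), one can arrange that \(t' := t^{-1}s\) actually lies in \(T\), whence \(s = tt' \in TT \subset T\). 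The two arguments rest on the same underlying mechanism (translating by a semigroup element close to \(\1\)), so neither is more general; yours is shorter on the page but delegates the substance to the cited lemma, while the paper's version keeps the lemma elementary and self-contained. Your appeal to \cite[Lem.\ 3.7(ii)]{HN93} is not circular --- that result concerns a single subsemigroup and its closure, whereas the present lemma compares two subsemigroups --- so either proof is acceptable.
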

\begin{proof}
  We only have to show \(S \subset T\). Let \(s \in S\) and let \(U \subset G\) be an open neighborhood of \(s\) such that \(U \subset S\). Then \(V := Us^{-1}\) is an open \(\1\)-neighborhood, so that \(\1 \in \oline T\) implies that \(T^{-1} \cap V \neq \emptyset\), i.e.\ \(T^{-1}s \cap U \neq \emptyset\). Since \(T^{-1}s \cap U\) is open in \(S\) and \(T\) is dense in \(S\), we thus also have \(T \cap (T^{-1}s \cap S) \neq \emptyset\), i.e.\ there exists \(t,t' \in T\) such that \(t' = t^{-1}s\). Hence, we have \(S \subset TT \subset T\), which proves the claim.
\end{proof}

\begin{prop}
  \label{prop:Ol-open}
  \begin{enumerate}
    \item Let \(W \subset \g\) be an \(e^{\ad \g}\)-invariant generating wedge. Then \(\Gamma_G(W)^o = \Gamma_G(W)^\times\exp(iW^o)\). In particular, \(\Gamma_G(W)^\times\exp(iW^o)\) is a dense semigroup ideal in \(\Gamma_G(W)\).
    \item Let \(iW \subset \g^c\) be an \(e^{\ad \g^c}\)-invariant wedge and set \(C := W \cap \fq \subset \g\). If \(C\) is generating in \(\fq\), then \(\Gamma(C)^o = \Gamma(C)^\times \exp(C^o)\). In particular, \(\Gamma(C)^\times\exp(C^o)\) is a dense semigroup ideal in \(\Gamma(C)\).
  \end{enumerate}
\end{prop}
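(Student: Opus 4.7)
The plan is to reduce both parts to the case of pointed wedges, where the description of the interior of an Olshanski semigroup is classical, and then to transfer the description back along the quotient by the edge $\fn$ of the wedge.

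For part (a), set $\fn := H(W)$. Since $W$ is $e^{\ad \g}$-invariant, $\fn$ is an ideal in $\g$, so $N_\C$ is closed and normal in $G_\C$ and $q : G_\C \to G_\C/N_\C$ is an open submersion. By Theorem \ref{thm:Ol-cplx}(b), $q$ maps $\Gamma_G(W)$ onto the complex Olshanski semigroup $\Gamma_Q(W')$ with pointed generating wedge $W' = W/\fn$, and since $N_\C \subset \Gamma_G(W)^\times$ one even has $\Gamma_G(W) = q^{-1}(\Gamma_Q(W'))$. For pointed generating $W'$ the classical identity $\Gamma_Q(W')^o = Q \exp(iW'^o)$ is standard (cf.\ \cite[Thm.\ 7.34]{HN93}, \cite[Thm.\ XI.1.12]{Ne00}), so the openness of $q$ yields
\[
\Gamma_G(W)^o \;=\; q^{-1}\bigl(\Gamma_Q(W')^o\bigr) \;=\; q^{-1}(Q) \cdot q^{-1}\bigl(\exp(iW'^o)\bigr).
\]
A short cone argument gives $\pi^{-1}(W'^o) = W^o$ for the linear projection $\pi : \g \to \g/\fn$: the identity $W + \fn = W$ forces $W^o + \fn \subset W^o$, and the openness of $\pi$ yields $\pi(W^o) \subset W'^o$; conversely, if $\pi(w) \in W'^o$ for $w \in W$, lifting a ball around $\pi(w)$ produces a neighbourhood of $w$ inside $\pi^{-1}(W') = W$. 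Hence $q^{-1}(\exp(iW'^o)) = \exp(iW^o)N_\C$, and combining this with $q^{-1}(Q) = N_\C \eta(G) = \Gamma_G(W)^\times$ from Theorem \ref{thm:Ol-cplx}(c) and using the normality of $N_\C$ to absorb the trailing copy, we obtain $\Gamma_G(W)^o = \Gamma_G(W)^\times \exp(iW^o)$.

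Part (b) runs in complete parallel. Set $\fn := H(C)$ and $\ff := [\fq,\fn] \oplus \fn$; this is the ideal of $\g$ identified in the proof of Theorem \ref{thm:Ol-real}. The corresponding normal integral subgroup $F \subset G$ gives an open submersion $q : G \to G/F$, and by the proof of Theorem \ref{thm:Ol-real}(b), $q(\Gamma(C))$ is the classical (weakly hyperbolic) Olshanski semigroup $H_Q \exp(C/\fn)$ with pointed generating cone $C/\fn \subset \fq/\fn$, where $H_Q = q(H)$. Its interior in $G/F$ is $H_Q \exp((C/\fn)^o)$ by the Lawson-type description of the pointed case. The analogous cone identity $\pi_\fq^{-1}((C/\fn)^o) = C^o$, combined with pulling back along $q$, the normality of $F$, and the description $\Gamma(C)^\times = FH$ from Theorem \ref{thm:Ol-real}(c), yields $\Gamma(C)^o = FH\exp(C^o) = \Gamma(C)^\times \exp(C^o)$.

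The ``in particular'' statements are then immediate. The interior $C^o$ is dense in $C$ because $C$ is generating in $\fq$, so $\Gamma(C)^\times \exp(C^o)$ is dense in $\Gamma(C)^\times \exp(C) = \Gamma(C)$ by Theorem \ref{thm:Ol-real}(b), and analogously in (a) using Theorem \ref{thm:Ol-cplx}(a). The two-sided semigroup-ideal property is automatic: for any $g$ in the ambient semigroup and any open $U$ contained in it, both $gU$ and $Ug$ are open in the ambient Lie group and still contained in the semigroup, hence in its interior. The main technical point is the cone identity $\pi^{-1}(W'^o) = W^o$ (and its analogue for $C$), together with the bookkeeping needed to absorb $N_\C$ (respectively $F$) past the exponential image via normality; once these are in place, everything reduces to invoking the pointed-wedge case.
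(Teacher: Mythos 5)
Your argument is correct and follows the same overall strategy as the paper: quotient by the edge $\fn$ (resp.\ by $F$), invoke the classical description of the interior of the resulting pointed Olshanski semigroup, and transport the answer back using the polar decomposition of Theorem \ref{thm:Ol-cplx} (resp.\ Theorem \ref{thm:Ol-real}) together with the cone identity $\pi^{-1}(W'^o)=W^o$. The only real divergence is at the concluding step: the paper shows that $\Gamma_G(W)^\times\exp(iW^o)$ is an \emph{open dense} subsemigroup of $\Gamma_G(W)$ and then identifies it with the interior via Lemma \ref{lem:semgrp-dense-int-eq}, whereas you exploit the saturation $\Gamma_G(W)=q^{-1}(\Gamma_Q(W'))$ to write $\Gamma_G(W)^o=q^{-1}(\Gamma_Q(W')^o)$ directly, which bypasses the density argument and Lemma \ref{lem:semgrp-dense-int-eq} altogether; this shortcut is legitimate (for an open continuous surjection, interiors of saturated sets pull back), provided one also records the analogous saturation $\Gamma(C)=q^{-1}(\Gamma(C_Q))$ in case (b), which indeed follows from $\Gamma(C)=FH\exp(C)$ and $\ker q=F$.
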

\begin{proof}
  (a)  Let \(\fn_\C\), \(N_\C\), \(\eta: G \rightarrow G_\C\), and \(q: \Gamma_G(W) \rightarrow \Gamma_Q(W')\) be defined as in Theorem \ref{thm:Ol-cplx}.  
  Let \(W_1\) be a closed convex cone such that \(W = W_1 \oplus \fn\), where \(\fn := H(W)\). 
  
  We first show that \(q^{-1}(\Gamma_Q({W'}^o)) = \Gamma_G(W^o)\). Let \(s \in \Gamma_G(W)\) such that \(q(s) \in \Gamma_Q(W'^o)\). By Theorem \ref{thm:Ol-cplx}, there exists a unique \(x \in W_1\) and \(g \in \Gamma_G(W)^\times\) such that \(s = \exp(ix)g\). By Lawson's Theorem \ref{thm:Lawson}, \(\exp(ix + \fn_\C)gN_\C \in \Gamma_Q(W'^o)\) implies that \(x + \fn_\C \in W'^o\) and, in particular, \(x \in W_1^o\). Thus, we have
  \[s \in \Gamma_G(W)^\times\exp(iW_1^o) \subset \Gamma_G(W)^\times\exp(iW^o).\]
  This shows that \(\Gamma_G(W)^\times\exp(iW^o)\) is an open semigroup ideal  because \(\Gamma_Q({W'}^o)\) is an open semigroup ideal in \(\Gamma_Q(W')\) by \cite[Thm.\ XI.1.12]{Ne00}. Since \(W\) is generating, the interior of \(W\) is dense in \(W\) (cf.\ \cite[Prop.\ 1.1(v)]{HN93}). Hence, \(\Gamma_G(W)^\times\exp(iW^o)\) is dense in \(\Gamma_G(W)\) by the polar decomposition (Theorem \ref{thm:Ol-cplx}). Now \(\Gamma_G(W)^o = \Gamma_G(W)^\times\exp(iW^o)\) follows from Lemma \ref{lem:semgrp-dense-int-eq}. The interior of a subsemigroup \(S\) of a Lie group with \(\1 \in \oline{S^o}\) is a dense semigroup ideal by \cite[Lem.\ 3.7]{HN93}

  (b) is proven in a similar way by using Theorem \ref{thm:Ol-real}.
\end{proof}

\begin{definition}
  \begin{inlinelist}
  \item Let \(W \subset \g\) be a non-empty open \(e^{\ad \g}\)-invariant convex cone. Then \(\oline W\) is an \(e^{\ad \g}\)-invariant wedge and we have \((\oline W)^o = W\). We define \(\Gamma_G(W) := \Gamma_G(\oline W)^o\).\\
  \item Let \(iW \subset \g^c\) be an \(e^{\ad \g^c}\)-invariant wedge such that \(C := (W \cap \fq)^o\) is non-empty, i.e. \(W \cap \fq\) is generating in \(\fq\). We define \(\Gamma(C) := \Gamma(\oline C)^o\).
  \end{inlinelist}
\end{definition}

\subsection{Extensions to representations of generalized Olshanski semigroups}

\begin{lem}
  \label{lem:rep-cone-cap-q}
  Let \((G,\tau)\) be a symmetric Lie group with Lie algebra \(\g = [\fq,\fq] \oplus \fq\) and \((\pi,\cH)\) be a strongly continuous unitary representation of \(G\). Then \((B(I_\pi) \cap \fq)^o = W_\pi \cap \fq\).
\end{lem}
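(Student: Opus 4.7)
The plan is to reduce the equality to a supporting-hyperplane argument in the finite-dimensional space \(\g\). Write \(W := B(I_\pi)\); then \(W\) is a closed convex \(\Ad(G)\)-invariant cone with \(W_\pi = W^o\), and the claim reads \((W \cap \fq)^o = W^o \cap \fq\), where the interior on the left is taken in \(\fq\). The inclusion \(\supseteq\) is immediate: \(W^o \cap \fq\) is \(\fq\)-open (as the trace of a \(\g\)-open set on a subspace) and contained in \(W \cap \fq\), hence lies in the \(\fq\)-interior of the latter.

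For the non-trivial inclusion I would fix \(y \in (W \cap \fq)^o\) and argue by contradiction, assuming \(y \notin W^o\). Since \(\g\) is finite-dimensional, Hahn--Banach supplies a non-zero \(\omega \in \g^*\) with \(\omega \geq 0\) on \(W\) and \(\omega(y) = 0\). By hypothesis, some open \(\fq\)-neighborhood \(V\) of \(y\) is contained in \(W\). For any \(y_0 \in \fq\) one has \(y \pm \varepsilon y_0 \in V \subseteq W\) for all sufficiently small \(\varepsilon > 0\), and combining \(\omega(y \pm \varepsilon y_0) \geq 0\) with \(\omega(y) = 0\) forces \(\omega(y_0) = 0\). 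Hence \(\omega|_\fq = 0\).

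The final step exploits the \(\Ad(G)\)-invariance of \(W\). For each \(x \in \g\) and \(y' \in V\), the curve
\[
f_{x,y'}(t) := \omega(e^{t\,\ad x} y')
\]
is non-negative on \(\R\) with \(f_{x,y'}(0) = \omega(y') = 0\) (since \(y' \in \fq\) and \(\omega|_\fq = 0\)), so its derivative at \(t = 0\) vanishes, giving \(\omega([x, y']) = 0\) for all \(x \in \g\) and \(y' \in V\). Specialising \(x \in \fq\) and varying \(y'\), linearity of \(y' \mapsto \omega([x, y'])\) together with openness of \(V\) in \(\fq\) yields \(\omega([x, y_0]) = 0\) for all \(x, y_0 \in \fq\), i.e.\ \(\omega|_{[\fq,\fq]} = 0\). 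Combined with \(\omega|_\fq = 0\) and the standing hypothesis \(\g = [\fq,\fq] \oplus \fq\), this forces \(\omega = 0\), contradicting \(\omega \neq 0\).

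The only real subtlety is bookkeeping: one has to keep the two notions of interior (in \(\fq\) versus in \(\g\)) straight and invoke openness of \(V\) in \(\fq\) at the right moment, first to kill \(\omega|_\fq\) and then (through \(\Ad(G)\)-invariance and the identity \(\fh = [\fq,\fq]\)) to propagate the vanishing to \(\omega|_{[\fq,\fq]}\); once \(V\) is in hand the rest is routine.
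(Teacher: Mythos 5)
Your argument is correct, but it takes a genuinely different route from the paper. The paper does not separate with a supporting functional at all: it passes to the antiunitary representation \(\rho = (\pi \oplus \pi_\tau^*)\) of \(G \rtimes \{\1,\tau\}\), checks that \(B(I_\rho) = B(I_\pi) \cap (-\L(\tau))B(I_\pi)\) is a \((-\L(\tau))\)-invariant cone with \(B(I_\rho) \cap \fq = B(I_\pi) \cap \fq\), and then quotes the standard fact for cones invariant under an involution (\cite[Prop.\ 1.6]{HN93}) to identify the relative interior of the \(\fq\)-trace with the trace of the interior; the hypothesis \(\g = [\fq,\fq] \oplus \fq\) enters there only to guarantee \(W_\pi \neq \emptyset\) via \(\g = \spann \Ad(G).\fq \subset B(I_\pi) - B(I_\pi)\). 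Your proof instead uses \(\Ad(G)\)-invariance infinitesimally: a supporting functional \(\omega\) at \(y\) is killed first on \(\fq\) by the relative openness of \(V\), then on \([\fq,\fq]\) by differentiating \(t \mapsto \omega(e^{t\ad x}y')\) at its minimum, and \(\g = [\fq,\fq] \oplus \fq\) finishes it. This is more elementary and self-contained — no dual representation, no external citation — at the cost of redoing by hand what \cite[Prop.\ 1.6]{HN93} packages. Two small points of hygiene: you call \(B(I_\pi)\) \emph{closed}, which the paper does not assert and which is not needed (your separation step only uses convexity, the cone property, and \(0 \in B(I_\pi)\), together with \(W \subset \oline{W^o}\) when \(W^o \neq \emptyset\)); and you should note explicitly that when \(W^o = \emptyset\) the desired \(\omega\) exists trivially because \(W\) then spans a proper subspace, so the dichotomy is covered.
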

\begin{proof}
  We first note that a closed convex cone has a non-empty interior if and only if it contains a basis of the surrounding vector space.
  Hence, if \((B(I_\pi) \cap \fq)^o = \emptyset\), then \(W_\pi \cap \fq = \emptyset\) because it does not contain a basis of \(\fq\).
  On the other hand, if \((B(I_\pi) \cap \fq)^o \neq \emptyset\), then \(\fq \subset B(I_\pi) - B(I_\pi)\), the \(\Ad(G)\)-invariance of \(B(I_\pi)\) (cf.\ \cite[Lem.\ X.1.3]{Ne00}), and the fact that \(\g\) is generated by \(\fq\) as a Lie algebra imply that
  \[\g = \spann \Ad(G).\fq \subset B(I_\pi) - B(I_\pi).\]
  Thus, we may assume for the rest of the proof that \((B(I_\pi) \cap \fq)^o \neq \emptyset\) and \(W_\pi \neq \emptyset\).

  Consider the semidirect product \(G_\tau := G \rtimes \{\1,\tau\}\) and define \(\sigma(g) := (\1,\tau)g(\1,\tau)\) for \(g \in G_\tau\). Then \(\sigma\) is an involutive automorphism of \(G_\tau\) with \(\sigma(g) = \tau(g)\) for \(g \in G \times \{\1\} \cong G\) which implies \(\L(\sigma) = \L(\tau)\). Let \((\pi^*,\cH^*)\) be the dual representation of \(\pi\) and set \(\pi_\tau^* := \pi^* \circ \tau\). Let \(\Phi : \cH \rightarrow \cH^*\) be the antiunitary operator defined by \(\Phi(v)(w) := \la v, w\ra\) and let
  \[J : \cH \oplus \cH^* \rightarrow \cH \oplus \cH^*, \quad (v,\lambda) \mapsto (\Phi^{-1}\lambda, \Phi v).\]
  Then \(J\) is an antiunitary involution and the representation \(\pi \oplus \pi_\tau^*\) of \(G\) extends to an antiunitary representation \((\rho, \cH \oplus \cH^*)\) of \(G_\tau\) such that \(\rho(\1,\tau) = J\) (cf.\ \cite[Lem.\ 2.10]{NO17}). In particular, we have \(\rho(\sigma(g)) = J\rho(g)J\) for \(g \in G_\tau\), which implies
  \[i\p\rho(\L(\tau)(x)) = i\p\rho(\L(\sigma)(x)) = -Ji\p\rho(x)J \quad \text{for } x \in \g.\]
  Hence \(B(I_\rho)\) is \((-\L(\tau))\)-invariant. Since \(\rho\lvert_G = \pi \oplus \pi_\tau^*\), we also have
  \[B(I_\rho) = B(I_\pi) \cap B(I_{\pi_\tau^*}) = B(I_\pi) \cap (-B(I_{\pi \circ \tau})) = B(I_\pi) \cap (-\L(\tau))B(I_\pi)\]
  and therefore \(B(I_\rho) \cap \fq = B(I_\pi) \cap \fq\). The argument at the beginning of the proof shows that \(W_\rho \neq \emptyset\). Furthermore, the \((-\L(\tau))\)-invariance of \(B(I_\rho)\) implies that \((B(I_\rho) \cap \fq)^o = W_\rho \cap \fq\) (cf.\ \cite[Prop.\ 1.6]{HN93}). Hence we have
  \[(B(I_\pi) \cap \fq)^o = (B(I_\rho) \cap \fq)^o = W_\rho \cap \fq = W_\pi \cap \fq.\qedhere\]
\end{proof}

\begin{prop}
  \label{prop:ol-q-inner-pts}
  Let \((\pi,\cH)\) be a strongly continuous non-degenerate \(*\)-representation of an open \(*\)-subsemigroup \(S\) of a symmetric Lie group \((G,\tau)\) with Lie algebra \(\g = \fh \oplus \fq\) and let \((\pi_1^c,\cH)\) be its analytic continuation to \(G_1^c\) {\rm(Theorem \ref{thm:si-integration})}. Then \(\L^o(S) \cap \fq \subset iB(I_{\pi_1^c})\) and, in particular, \((\L^o(S) \cap \fq)^o \subset iW_{\pi_1^c}\).
\end{prop}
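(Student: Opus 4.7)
The plan is to prove $-iy \in B(I_{\pi_1^c})$ for each $y \in \L^o(S) \cap \fq$, which is equivalent to $y = i(-iy) \in iB(I_{\pi_1^c})$. By Remark \ref{rem:rkhs-ident-noncyclic}, $\p\pi_1^c(iy) = i\cL_y^\pi$ with $\cL_y^\pi$ selfadjoint, so $i\p\pi_1^c(-iy) = -i\p\pi_1^c(iy) = \cL_y^\pi$. The task therefore reduces to proving $\sup\spec(\cL_y^\pi) < \infty$.

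Since $y \in \L^o(S) \cap \fq$, Corollary \ref{cor:semigrp-rep-liecone-curve} combined with Proposition \ref{prop:cont-op-semgrps} shows that $U(t) := \pi(\exp(ty))$ ($t > 0$) extends to a selfadjoint semigroup $U \colon [0,\infty) \to B(\cH)$ with $U(0) = \id_\cH$ and $U(t) = e^{tA}$ for some selfadjoint generator $A$. Because each $e^{tA}$ is a bounded operator, $\sup\spec(A) < \infty$. To identify $A$ with $\cL_y^\pi$, fix $s \in S$ and choose $\varepsilon > 0$ with $\exp(ty)s \in S$ for $|t| < \varepsilon$. Theorem \ref{thm:semigrp-rep-int}(a) then gives
\[
e^{tA}\pi(s)v = \pi(\exp(ty))\pi(s)v = \pi(\exp(ty)s)v = e^{-it\p\pi_1^c(iy)}\pi(s)v = e^{t\cL_y^\pi}\pi(s)v
\]
for all $v \in \cH$ and $0 < t < \varepsilon$. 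Restricting to $v' = \pi(f)v$ with $f \in C_c^\infty(S)$, the vector $v'$ is smooth for $\pi$ (Lemma \ref{lem:semigrp-smooth-vec}) and lies in $\cD(\cL_y^\pi)$ by Remark \ref{rem:rkhs-ident-noncyclic}; differentiating the displayed identity at $t = 0^+$ (using Lemma \ref{lem:deriv-op-flow} for the derivative of $U(t)\pi(f)v = \pi(f \circ \Phi^y_{-t})v$) yields $Av' = \cL_y^\pi v'$, so $A$ and $\cL_y^\pi$ agree on the dense subspace $\cH^0 := \spann\{\pi(f)\cH : f \in C_c^\infty(S)\}$. Since $\cH^0$ is a core of $\cL_y^\pi$ and both operators are selfadjoint, $A = \cL_y^\pi$, whence $\sup\spec(\cL_y^\pi) = \sup\spec(A) < \infty$, establishing the main inclusion.

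For the ``in particular'' claim, observe that $\g_1^c = [\fq,\fq] \oplus i\fq$ carries a symmetric-pair structure with $\fh' := [\fq,\fq]$ and $\fq' := i\fq$: one checks $[\fq',\fq'] = [i\fq,i\fq] = [\fq,\fq] = \fh'$, so the decomposition satisfies the hypothesis $\g_1^c = [\fq',\fq'] \oplus \fq'$ of Lemma \ref{lem:rep-cone-cap-q}, and the corresponding Lie-algebra involution integrates to an involution on the 1-connected $G_1^c$. Applying the lemma to $\pi_1^c$ gives $(B(I_{\pi_1^c}) \cap i\fq)^o = W_{\pi_1^c} \cap i\fq$, where the interior on the left is taken in $i\fq$ while $W_{\pi_1^c}$ denotes the interior of $B(I_{\pi_1^c})$ in $\g_1^c$. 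The real-linear homeomorphism $y \mapsto -iy \colon \fq \to i\fq$ sends $\L^o(S) \cap \fq$ into $B(I_{\pi_1^c}) \cap i\fq$ by the main claim, hence sends interiors to interiors, yielding
\[
-i\bigl((\L^o(S) \cap \fq)^o\bigr) \subset (B(I_{\pi_1^c}) \cap i\fq)^o = W_{\pi_1^c} \cap i\fq \subset W_{\pi_1^c},
\]
which is equivalent to $(\L^o(S) \cap \fq)^o \subset iW_{\pi_1^c}$.

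The hard part is the identification $A = \cL_y^\pi$ in the middle paragraph: two a priori distinct selfadjoint operators — the generator of the semigroup $t \mapsto \pi(\exp(ty))$ and the operator coming from the analytic continuation $\pi_1^c$ — must be reconciled, which requires the smoothness of vectors in $\cH^0$ and the core property of $\cH^0$ for $\cL_y^\pi$. Keeping track of signs, in particular the conversion $\cL_y^\pi = -i\p\pi_1^c(iy)$ and the resulting $i\p\pi_1^c(-iy) = \cL_y^\pi$, is the other source of potential error.
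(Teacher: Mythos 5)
Your proposal is correct and takes essentially the same route as the paper: both identify the selfadjoint operator $\cL_y^\pi = i\p\pi_1^c(-iy)$ with the generator $A$ of the semigroup $t \mapsto \pi(\exp(ty))$ by matching the two on the core $\cH^0$, deduce $\sup\spec(\cL_y^\pi) < \infty$ from boundedness of the semigroup operators, and then obtain the ``in particular'' statement from Lemma \ref{lem:rep-cone-cap-q}. The only (immaterial) differences are that the paper gets the spectral bound from general $C^0$-semigroup theory and uses that $\cH^0$ is a core for both $A$ and $\cL_y^\pi$, whereas you use selfadjointness of $A$ from Proposition \ref{prop:cont-op-semgrps} together with maximality of selfadjoint operators.
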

\begin{proof}
  We first prove that \(\L^o(S) \cap \fq \subset iB(I_{\pi_1^c})\). Let \(x \in \L^o(S) \cap \fq\). Then, by Corollary \ref{cor:semigrp-rep-liecone-curve}, the curve
  \[\gamma : [0,\infty) \rightarrow B(\cH), \quad \gamma(0) := \id_\cH, \, \gamma(t) := \pi(\exp(tx)) \quad (t > 0),\]
  is a strongly continuous \(C^0\)-semigroup.
  Let \(A : \cD(A) \rightarrow \cH\) be its closed generator.
  Since \(\gamma\) leaves the dense subspace \(\cH^0 = \{\pi(f)\cH : f \in C_c^\infty(S)\}\) invariant, it is a core of \(A\) (cf.\ \cite[Thm.\ X.49]{ReSi75}).
  By a similar argument as in Proposition \ref{prop:semgrp-rep-curve} (a), we have \(\cD(A) \subset \cD(\cL^\pi_x)\) and \(\derat{0} \gamma(t)v = \cL^\pi_x v\) for all \(v \in \cD(A)\).
  Since \(\cH^0 \subset \cD(A)\), we have \(A\lvert_{\cH^0} = \cL^\pi_x\lvert_{\cH^0}\), hence \(A = \cL^\pi_x\) because \(\cH^0\) is also a core of \(\cL^\pi_x\) (cf.\ Remark \ref{rem:rkhs-ident-noncyclic}).
  In particular, \(\cL^\pi_x\) is the generator of a strongly continuous semigroup, which implies that \(\spec(\cL^\pi_x) \subset (-\infty,c)\) for some \(c \in \R\) (cf.\ \cite[Thm.\ X.47b]{ReSi75}).
  Since \(\cL^\pi_x = i\partial\pi_1^c(-ix)\), this shows \(-ix \in B(I_{\pi_1^c})\).

  Using Lemma \ref{lem:rep-cone-cap-q}, we conclude that
  \[(\L^o(S) \cap \fq)^o \subset (iB(I_\pi) \cap \fq)^o = iW_{\pi_1^c} \cap \fq \subset iW_{\pi_1^c}.\qedhere\]
\end{proof}

\begin{prop}
  \label{prop:Olsh-H-H1-rep}
  Let \((G,\tau)\) be a 1-connected symmetric Lie group with Lie algebra \(\g = \fh \oplus \fq\).
  \begin{itemize}
    \item Let \(H_1 \subset G\) be the integral subgroup of \(G\) with \(\L(H_1) = [\fq,\fq]\).
    \item Let \(H \subset G\) be the integral subgroup of \(G\) with \(\L(H) = \fh\).
    \item Let \(C \subset \fq\) be a non-empty \(e^{\ad \fh}\)-invariant weakly hyperbolic open convex cone.
    \item Let \(\tilde H\) be the 1-connected Lie group with \(\L(\tilde H) = \fh\).
  \end{itemize}
  Consider the semidirect product \(\tilde H \ltimes \Gamma_1(C)\), where \(\tilde H\) acts on \(\Gamma_1(C)\) by conjugation, and a strongly continuous representation
  \[\pi = (\pi^{\tilde H},\pi^S) : \tilde H \ltimes \Gamma_1(C) \rightarrow B(\cH).\]
  Let \(q_H : \tilde H \rightarrow H\) be the universal covering map of \(H\).
  If the representation \(\pi\lvert_{\tilde H \ltimes H_1}\) vanishes on the integral subgroup \(\Delta\) of \(\tilde H \ltimes H_1\) with \(\L(\Delta) = \{(x,-x) : x \in [\fq,\fq]\}\), then
  \[\tilde\pi : \Gamma(C) \rightarrow B(\cH), \quad q_H(h)\exp(y) \mapsto \pi^{\tilde H}(h)\pi^S(\exp(y)).\]
  is a well-defined strongly continuous representation of \(\Gamma(C)\).
\end{prop}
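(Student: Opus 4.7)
The plan is to realise \(\Gamma(C)\) as the image of \(\tilde H \ltimes \Gamma_1(C)\) under a natural surjective semigroup homomorphism \(\Phi\) and to factor \(\pi\) through \(\Phi\). The key preparatory observation is that since \(\g_1\) is an ideal of \(\g\) and \(G_1\) is 1-connected (by \cite[Ch.\ XII, Thm.\ 1.2]{Ho65}), the Lie algebra inclusion \(\g_1 \hookrightarrow \g\) integrates to a Lie group homomorphism \(p_1 : G_1 \to G\) whose image is the 1-connected integral subgroup of \(G\) with Lie algebra \(\g_1\); in particular \(p_1\) is injective. Then
\[
\Phi : \tilde H \ltimes \Gamma_1(C) \to \Gamma(C), \qquad (h,s) \mapsto q_H(h)\,p_1(s),
\]
is a continuous homomorphism of topological semigroups (using that \(p_1\) intertwines the adjoint actions of \(\tilde H\)), and it is surjective by the polar decomposition \(\Gamma(C) = \Gamma(\oline C)^o = H\exp(C)\) obtained from Theorem \ref{thm:Lawson} and Proposition \ref{prop:Ol-open}(b).

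The central step is to verify the well-definedness of \(\tilde\pi\). By uniqueness of the polar decomposition of \(\Gamma(C)\), if \(q_H(h_1)\exp(y_1) = q_H(h_2)\exp(y_2)\) then \(y_1 = y_2\) and \(h_1 h_2^{-1} \in \ker q_H\), so the whole task reduces to showing \(\pi^{\tilde H}|_{\ker q_H} = \id_\cH\). Let \(\tilde H_1\) denote the integral subgroup of \(\tilde H\) with \(\L(\tilde H_1) = [\fq,\fq]\), which is well-defined since \([\fq,\fq]\) is an ideal of \(\fh\). Because \(G\) is 1-connected, Lemma \ref{lem:semidir-prod-surj}(a) gives that the kernel of \(G_1 \rtimes \tilde H \to G\) coincides with the connected integral subgroup \(\Delta(H_\fq)\); in particular every \((\1,n)\) with \(n \in \ker q_H\) lies in \(\Delta(H_\fq)\), which forces \(\ker q_H \subset \tilde H_1\). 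Now the subgroup \(\Delta \subset \tilde H \ltimes H_1\) of the hypothesis is the image of the injective homomorphism \(\iota : \tilde H_1 \to \tilde H \ltimes H_1\), \(\tilde h_1 \mapsto (\tilde h_1, r(\tilde h_1)^{-1})\), where \(r : \tilde H_1 \to H_1\) is the natural covering (its differential is \((\id,-\id)\), so \(\iota(\tilde H_1) = \Delta\)). The hypothesis therefore reads \(\pi^{\tilde H}(\tilde h_1) = \pi^S(r(\tilde h_1))\) for all \(\tilde h_1 \in \tilde H_1\). For \(n \in \ker q_H \subset \tilde H_1\) we have \(p_1(r(n)) = q_H(n) = \1\), forcing \(r(n) = \1\) by injectivity of \(p_1\), and hence \(\pi^{\tilde H}(n) = \pi^S(\1) = \id_\cH\).

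Once well-definedness is established, multiplicativity of \(\tilde\pi\) follows from multiplicativity of \(\pi\) and surjectivity of \(\Phi\), while strong continuity follows by composing \(\pi\) with a continuous local cross-section of \(\Phi\): such sections exist because the polar map \(H \times C \to \Gamma(C)\) is a homeomorphism (Theorem \ref{thm:Lawson}) and \(q_H\) admits continuous local lifts as a covering. The main obstacle is the kernel identification in the previous paragraph—reconciling the ambiguities inherent in the covering \(q_H\), in the polar decomposition of \(\Gamma(C)\), and in the decomposition of \(\Gamma_1(C)\); the hypothesis \(\pi|_\Delta = \id\) is tailored precisely so that all these ambiguities cancel.
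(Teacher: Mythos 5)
Your proof is correct and follows essentially the same route as the paper: both rest on the polar decomposition of \(\Gamma(C)\) and on Lemma \ref{lem:semidir-prod-surj}, which identifies \(\ker(\tilde H \ltimes H_1 \to H)\) with \(\Delta\), so that the hypothesis \(\pi\lvert_\Delta = \id\) kills exactly the covering ambiguity in \(q_H(h)\). The only difference is cosmetic: the paper observes directly that \((h^{-1}h',\1) \in \Delta\) and checks multiplicativity by hand, whereas you detour through \(\ker q_H \subset \tilde H_1\) and the relation \(\pi^{\tilde H}\lvert_{\tilde H_1} = \pi^S \circ r\) and then derive multiplicativity and continuity from the factorization through \(\Phi\) — which is fine, since that relation is exactly what makes \(\tilde\pi \circ \Phi = \pi\) hold on all of \(\tilde H \ltimes \Gamma_1(C)\).
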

\begin{proof}
  We first recall from Lemma \ref{lem:semidir-prod-surj}(b) that the map
  \[\tilde H \ltimes H_1 \rightarrow H, \quad (\tilde h, h_1) \mapsto q(\tilde h)h_1\]
  is a surjective homomorphism of Lie groups whose kernel is \(\Delta\). Hence \((\tilde H \ltimes H_1)/\Delta \cong H\).
  Let now \(h,h' \in \tilde H\) and \(y \in C\) such that \(q_H(h)\exp(y) = q_H(h')\exp(y)\). Then \(h^{-1}h' \in \Delta\) implies that
  \[\pi^{\tilde H}(h)\pi^S(\exp(y)) = \pi^{\tilde H}(h')\pi^S(\exp(y)),\]
  which proves that \(\tilde \pi\) is well-defined. The multiplicativity of \(\tilde\pi\) follows from
  \begin{align*}
    \tilde\pi(q_H(h)\exp(y))\tilde\pi(q_H(h')\exp(y')) &= \pi^{\tilde H}(h)\pi^S(\exp(y))\pi^{\tilde H}(h')\pi^S(\exp(y'))\\
                                                       &= \pi^{\tilde H}(h)\pi^{\tilde H}(h')\pi^S(\exp(\Ad(h'^{-1})y))\pi^S(\exp(y')) \\
                                                       &= \pi^{\tilde H}(hh')\pi^S(\exp(\Ad(h'^{-1})y)\exp(y')) \\
                                                       &=\tilde\pi(q_H(hh')\exp(\Ad(h'^{-1})y)\exp(y'))\\
                                                       &= \tilde\pi(q_H(h)\exp(y)q_H(h')\exp(y')).
  \end{align*}
  The strong continuity of \(\tilde \pi\) follows from the fact that \(\Gamma(C) \cong ((\tilde H \ltimes H_1)/\Delta) \times C\) (cf.\ Theorem \ref{thm:Lawson}) and the strong continuity of \((\pi^{\tilde H},\pi^S)\) on \(\Gamma_1(C) \cong \tilde H \times H_1 \times C\).
\end{proof}

\begin{cor}
  \label{cor:Olsh-H-H1-rep}
  Let \((G,\tau),\g,H,H_1,\tilde H\) be defined as in {\rm Proposition \ref{prop:Olsh-H-H1-rep}}. Let \(iW \subset \g^c\) be an \(e^{\ad \g^c}\)-invariant wedge and set \(C := W \cap \fq \subset \g, \ff := [\fq,\fn] \oplus \fn, F := \la \exp_{G}(\ff)\ra,\) and \(\tilde H_F := \la \exp_{\tilde H}([\fq,\fn])\ra\). If \(C\) is generating in \(\fq\) and if the strongly continuous representation
  \[\pi = (\pi^{\tilde H},\pi^S) : \tilde H \ltimes \Gamma_1(C) \rightarrow B(\cH)\]
  vanishes on the integral subgroup \(\Delta\) of \(\tilde H \ltimes H\) with \(\L(\Delta) = \{(x,-x) : x \in [\fq,\fq]\}\) and \(\tilde H_F \subset \ker(\pi^{\tilde H}), F \subset \ker(\pi^S)\), then
  \[\tilde\pi : \Gamma(C)^o \rightarrow B(\cH), \quad q_H(h)\exp(y)f \mapsto \pi^{\tilde H}(h)\pi^S(\exp(y)).\]
  is a well-defined strongly continuous representation of \(\Gamma(C)^o\).
\end{cor}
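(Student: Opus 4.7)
My plan is to reduce the statement to Proposition \ref{prop:Olsh-H-H1-rep} by passing to the quotient that collapses the edge $\fn = H(C)$ to zero. By Theorem \ref{thm:Ol-real}, $\ff := [\fq,\fn] \oplus \fn$ is a $\tau$-invariant ideal in $\g$, so $\bar\g := \g/\ff$ inherits a symmetric decomposition $\bar\g = \bar\fh \oplus \bar\fq$ with $\bar\fq = \fq/\fn$. The image $\bar C \subset \bar\fq$ of $C$ is then a pointed, weakly hyperbolic, $e^{\ad\bar\fh}$-invariant closed convex cone, and since $C$ generates $\fq$, $\bar C$ generates $\bar\fq$. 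This puts us precisely in the setting of Proposition \ref{prop:Olsh-H-H1-rep}.

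Next I would descend $\pi$ along this quotient. The $\Ad(H)$-invariance of the edge $\fn = H(C)$ makes $[\fq,\fn]$ an ideal in $\fh$, so $\tilde H_F$ is closed and normal in $\tilde H$; similarly $F$ is closed and normal in $G_1$. Setting $\bar{\tilde H} := \tilde H/\tilde H_F$ and $\bar G_1 := G_1/F$, these are Lie groups with Lie algebras $\bar\fh$ and $[\bar\fq,\bar\fq] \oplus \bar\fq$, and the Olshanski semigroup $\Gamma_1(\bar C) \subset \bar G_1$ is the image of $\Gamma_1(C)$. Since the adjoint action of $\tilde H$ on $\Gamma_1(C)$ preserves $F$, it descends to the action of $\bar{\tilde H}$ on $\Gamma_1(\bar C)$, giving a continuous surjective semigroup homomorphism $\tilde H \ltimes \Gamma_1(C) \to \bar{\tilde H} \ltimes \Gamma_1(\bar C)$. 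The kernel hypotheses on $\pi^{\tilde H}$ and $\pi^S$ then ensure that $\pi$ factors through a strongly continuous representation $\bar\pi = (\bar\pi^{\bar{\tilde H}}, \bar\pi^{\bar S})$ of $\bar{\tilde H} \ltimes \Gamma_1(\bar C)$, and the vanishing on $\Delta$ descends to the corresponding condition on $\bar\pi$.

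Applying Proposition \ref{prop:Olsh-H-H1-rep} to $\bar\pi$ (with the open cone $\bar C^o$) gives a strongly continuous representation $\bar{\tilde\pi}: \Gamma(\bar C)^o \to B(\cH)$. The quotient $q: G \to \bar G := G/F$ restricts by Proposition \ref{prop:Ol-open}(b) to a continuous semigroup homomorphism $\Gamma(C)^o = FH\exp(C^o) \to \Gamma(\bar C)^o$, so $\tilde\pi := \bar{\tilde\pi} \circ q$ is a strongly continuous representation of $\Gamma(C)^o$. For $s = q_H(h)\exp(y)f$, the image $q(s)$ equals $\bar q_{\bar H}(\bar h)\exp(\bar y)$ because $q(f) = 1$, and the explicit formula for $\bar{\tilde\pi}$ from Proposition \ref{prop:Olsh-H-H1-rep} yields
\[
\tilde\pi(s) = \bar\pi^{\bar{\tilde H}}(\bar h)\bar\pi^{\bar S}(\exp(\bar y)) = \pi^{\tilde H}(h)\pi^S(\exp(y)),
\]
as claimed.

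The main technical obstacle is the bookkeeping of the second step: verifying that $[\fq,\fn]$ is an ideal in $\fh$ (using the $\Ad(H)$-invariance of $\fn$), that $\tilde H_F$ and $F$ are closed and normal, and that the semidirect product structure descends compatibly. Once this structural work is complete, the construction of $\tilde\pi$ and all its properties follow from Proposition \ref{prop:Olsh-H-H1-rep} by pullback.
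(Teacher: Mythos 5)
Your proposal is correct and takes essentially the same route as the paper: both pass to the quotients $\tilde H/\tilde H_F$ and $G/F$ (equivalently $\g/\ff$), factor $\pi$ through the induced representation of the quotient semidirect product using the kernel hypotheses, apply Proposition \ref{prop:Olsh-H-H1-rep} to the resulting pointed weakly hyperbolic cone $C/\fn$, and pull the representation back along the quotient map $\Gamma(C)^o \rightarrow \Gamma_Q(C')$. The structural verifications you flag as the main technical work ($[\fq,\fn]$ an ideal in $\fh$, $\ff$ an ideal in $\g$, $1$-connectedness of the quotients) are exactly the points the paper's proof records.
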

\begin{proof}
  Recall from Proposition \ref{prop:Ol-open} that \(\Gamma(C)^o = \Gamma(C)^\times \exp(C^o)\). The Lie subalgebra \([\fq,\fn]\) is an ideal in \(\fh\) and \(\ff\) is an ideal in \(\g\). Hence, the Lie groups \(\tilde H/\tilde H_F\) and \(G/F\) are 1-connected (cf.\ \cite[Ch.\ XII, Thm.\ 1.2]{Ho65})and \(\Gamma_{Q_1}(C') \subset G/F\) is an Olshanski semigroup, where \(Q_1 := \la\exp_{G/F}([\fq,\fq]/[\fq,\fn])\ra\) and \(C' = C^o/\fn\).
  The representation \(\pi\) factors through a strongly continuous representation
  \[\pi_0 = (\pi_0^{\tilde H/\tilde H_F}, \pi_0^S) : (\tilde H/\tilde H_F) \ltimes \Gamma_{Q_1}(C') \rightarrow B(\cH)\]
  which satisfies the premises of Proposition \ref{prop:Olsh-H-H1-rep}. Hence, we obtain a strongly continuous representation \(\tilde\pi_0 : \Gamma_Q(C') \rightarrow B(\cH)\) with
  \[\tilde\pi_0(q_Q(h\tilde H_F)\exp(y + \fn)) = \pi_0^{\tilde H/\tilde H_F}(h\tilde H_F)\pi_0^S(\exp(y + \fn)) = \pi^{\tilde H}(h)\pi^S(\exp(y))\]
  for \(h \in \tilde H, y \in C^o\), where \(Q := \la\exp_{G/F}(\fh/[\fq,\fn])\ra\) and \(q_Q : \tilde H/\tilde H_F \rightarrow Q\) is a universal covering of \(Q\).
  Since we have a quotient map \(q: \Gamma(C^o) \rightarrow \Gamma_Q(C')\), we obtain a strongly continuous representation \(\tilde \pi := \tilde\pi_0 \circ q\) with
  \[\tilde\pi(q_H(h)\exp(y)) = \tilde\pi_0(q(q_H(h))\exp(y + \fn)) = \tilde\pi_0(q_Q(h\tilde H_F)\exp(y + \fn)) = \pi^{\tilde H}(h)\pi^S(\exp(y))\]
  for all \(h \in \tilde H, y \in C^o\).
\end{proof}

\begin{thm}
  \label{thm:Ol-semgrp-ext}
  Let \((G,\tau)\) be a connected symmetric Lie group with Lie algebra \(\g = \fh \oplus \fq\) and let \(S \subset G\) be an open \(*\)-subsemigroup, where \(g^* := \tau(g)^{-1}\) for \(g \in G\).
  Let \(\pi: S \rightarrow B(\cH)\) be a continuous non-degenerate \(*\)-representation of \(S\).
  If the interior of \(\L^o(S) \cap \fq\) in \(\fq\) is non-empty, then there exists a continuous non-degenerate \(*\)-representation \(\tilde \pi : \Gamma(C) \rightarrow B(\cH)\) of the \(*\)-semigroup \(\Gamma(C)\), where \(C := iW_{\pi_1^c} \cap \fq \supseteq (\L^o(S) \cap \fq)^o\) and \(\pi_1^c\) is the analytic continuation of \(\pi\) to \(G_1^c\) {\rm(cf.\ Theorem \ref{thm:semigrp-rep-int})}, such that
  \begin{equation}
    \label{eq:Ol-semgrp-ext-exp}
    \tilde\pi(\exp(x)) = \pi(\exp(x)), \quad \text{for } x \in (\L^o(S) \cap \fq)^o.
  \end{equation}
  In particular, if \(G\) is 1-connected, then \(\Gamma(C) \subset G\) and \(\tilde\pi\) is an extension of \(\pi\lvert_{S_0}\), where \(S_0\) is the subsemigroup generated by \(\exp_G((\L^o(S) \cap \fq)^o)\).
\end{thm}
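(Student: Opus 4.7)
The plan is to reduce the theorem to Corollary \ref{cor:Olsh-H-H1-rep} by constructing, out of $\pi$ and its analytic continuations, a strongly continuous representation of $\tilde H\ltimes\Gamma_1(C)$ that satisfies the kernel hypotheses of that corollary. First, since $(\L^o(S)\cap\fq)^o\neq\emptyset$ in particular forces $\fq\cap\L^o(S)\neq\emptyset$, Corollary \ref{cor:semigrp-rep-liecone} shows that $\pi$ is locally $H$-compatible, so Theorem \ref{thm:localrep} produces the analytic continuation $\pi^c$ on $G^c$ together with the component representations $\pi_1^c$ of $G_1^c$ and $\pi^{\tilde H}$ of $\tilde H$. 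By Proposition \ref{prop:ol-q-inner-pts}, the cone $C=iW_{\pi_1^c}\cap\fq$ contains the non-empty open set $(\L^o(S)\cap\fq)^o$, and its $e^{\ad\fh}$-invariance follows from Proposition \ref{prop:localrep-comm-rel}(b).

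The technical core is the construction of $\pi^S:\Gamma_1(C)\to B(\cH)$. I would apply complex Olshanski theory to the unitary representation $\pi_1^c$ of the 1-connected group $G_1^c$ and the $\Ad(G_1^c)$-invariant wedge $\overline{W_{\pi_1^c}}\subset\g_1^c$, which by the very definition of $W_{\pi_1^c}$ satisfies $\sup\spec(i\partial\pi_1^c(w))<\infty$ for every $w$ in the wedge. This yields a strongly continuous $*$-representation $\hat\pi_1^c$ of the generalized complex Olshanski semigroup $\Gamma_{G_1^c}(\overline{W_{\pi_1^c}})$ inside the universal complexification of $G_1^c$, holomorphic on the interior, with $\hat\pi_1^c(\exp(iw))=e^{i\partial\pi_1^c(w)}$ for $w$ in the wedge. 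Proposition \ref{prop:Olsh-CW-homo}, applied to the symmetric pair $(G_1,\tau_1)$ with the wedge $-\overline{W_{\pi_1^c}}\subset\g_1^c$ playing the role of ``$iW$'', produces a continuous $*$-homomorphism $\gamma_1:\Gamma_1(\overline C)\to\Gamma_{G_1^c}(\overline{W_{\pi_1^c}})$ sending $\exp_{G_1}(y)$ to the complex exponential of $y\in\fq$ inside the complexification. Setting $\pi^S:=\hat\pi_1^c\circ\gamma_1$, I compute $\pi^S(\exp_{G_1}(y))=e^{i\partial\pi_1^c(-iy)}=e^{\cL^\pi_y}$ for $y\in C$, and $\pi^S|_{H_1}=\pi_1^c|_{H_1}$. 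Proposition \ref{prop:localrep-comm-rel}(b,c) then gives the equivariance $\pi^{\tilde H}(h)\pi^S(s)\pi^{\tilde H}(h)^{-1}=\pi^S(\alpha_h(s))$, so $(\pi^{\tilde H},\pi^S)$ assembles into a strongly continuous representation of $\tilde H\ltimes\Gamma_1(C)$. The diagonal $\Delta\subset\tilde H\ltimes H_1$ is killed because both $\pi^S|_{H_1}$ and $\pi^{\tilde H}|_{\exp_{\tilde H}([\fq,\fq])}$ descend from the single representation $\pi^c$ of $G^c$; when the edge $\fn:=H(C)$ is non-trivial, the inclusion $\fn\subset H(B(I_{\pi_1^c}))$ forces $\partial\pi_1^c$ to vanish on $\fn$ and on the ideal $[\fq,\fn]+\fn$, yielding the remaining kernel conditions. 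Corollary \ref{cor:Olsh-H-H1-rep} then delivers the desired $\tilde\pi:\Gamma(C)\to B(\cH)$; non-degeneracy follows from the density of $e^{\cL^\pi_x}\cH$ for $x\in C^o$ via Corollary \ref{cor:semigrp-rep-liecone-curve} and spectral calculus.

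The extension property \eqref{eq:Ol-semgrp-ext-exp} is immediate: for $x\in(\L^o(S)\cap\fq)^o$, the curve $t\mapsto\pi(\exp(tx))$ is a strongly continuous one-parameter semigroup on $[0,\infty)$ (Corollary \ref{cor:semigrp-rep-liecone-curve}) whose generator is $\cL^\pi_x$ by Theorem \ref{thm:localrep-analytic-cont}(a), so $\pi(\exp(x))=e^{\cL^\pi_x}=\tilde\pi(\exp(x))$. The main obstacle is the construction of $\pi^S$: because $\overline{W_{\pi_1^c}}$ need not be pointed, the standard polar decomposition of the complex Olshanski semigroup breaks down, and one must quotient by the edge of the wedge via Theorem \ref{thm:Ol-cplx}; the delicate point is showing that $\hat\pi_1^c$ and $\gamma_1$ both factor compatibly through this quotient so that their composition recovers the multiplicativity of $\pi^S$ across the polar decomposition of $\Gamma_1(C)$. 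In the non-1-connected case one first lifts $\pi$ to the universal cover $\tilde G$, applies the construction there, and then descends; the comparison with $\pi$ on the original $S_0$ uses the same Theorem \ref{thm:localrep-analytic-cont}(a).
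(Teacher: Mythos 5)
Your proposal is correct and follows essentially the same route as the paper's proof: both construct the holomorphic Olshanski extension of \(\pi_1^c\) (quotienting by the edge of the wedge via Theorem \ref{thm:Ol-cplx}), pull it back along the homomorphism of Proposition \ref{prop:Olsh-CW-homo} to get the representation of \(\Gamma_1(C)\), combine it with \(\pi^{\tilde H}\) using the equivariance from Proposition \ref{prop:localrep-comm-rel}, and then descend through Corollary \ref{cor:Olsh-H-H1-rep}, with the extension property \eqref{eq:Ol-semgrp-ext-exp} coming from Proposition \ref{prop:ol-q-inner-pts} and Theorem \ref{thm:localrep-analytic-cont}(a). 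The only deviations are in sign/bookkeeping conventions for the wedge and the edge \(\fn\), which do not affect the argument.
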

\begin{proof}
  The open convex cone \(W_{\pi_1^c}\) is non-empty by Proposition \ref{prop:ol-q-inner-pts}. Let \(\fn = H(B(I_{\pi_1^c})) = \ker \dd \pi_1^c\) and \(N := \ker(\pi_1^c)\). Then \(\pi_1^c\) factors through a representation \(\tilde \pi_1^c\) of the 1-connected Lie group \(G_1^c/N\). Since \(W_{\tilde \pi_1^c} = W_{\pi_1^c}/\fn\) is weakly elliptic (cf.\ Example \ref{ex:hyp-ell-sets}(1)), we obtain by \cite[Thm.\ XI.2.3]{Ne00} a holomorphic Olshanski semigroup representation
  \[\pi_N: \Gamma_{G_1^c/N}(-W_{\tilde \pi_1^c}) \rightarrow B(\cH), \quad \pi_N(gN\exp(iy + \fn)) = \tilde\pi_1^c(gN)e^{-i\p\tilde\pi_1^c(iy + \fn)} = \pi_1^c(g)e^{-i\p\pi_1^c(iy)},\]
  which we pull back to a holomorphic representation
  \[\hat \pi: \Gamma_{G_1^c}(-W_{\pi_1^c}) \rightarrow B(\cH) \quad \text{with} \quad \hat\pi(g\exp(iy)) = \pi_1^c(g)e^{-i\p\pi_1^c(iy)}.\]
  Let \(\tilde G_1\) be the 1-connected Lie group with Lie algebra \(\g_1\) and let \(\Gamma_1(C)\) be the subsemigroup of \(\tilde G_1\) we obtain from Proposition \ref{prop:Ol-open}. Let \(\oline{\gamma}: \Gamma_1(\oline C) \rightarrow \Gamma_{G_1^c}(-\oline W_{\pi_1^c})\) be the homomorphism we obtain from Proposition \ref{prop:Olsh-CW-homo}. Then \(\oline \gamma\) restricts to a homomorphism \(\gamma: \Gamma_1(C) \rightarrow \Gamma_{G_1^c}(- W_{\pi_1^c})\) because of the construction of \(\oline \gamma\) and \((\oline C)^o \subset (i\oline{W_{\pi_1^c}})^o\). We define \(\tilde\pi_1 = \hat\pi \circ \gamma\).
  Let \(\tilde H\) be the 1-connected Lie group with \(\L(\tilde H) = \fh\) and let \((\pi^{\tilde H},\cH)\) be the unitary representation of \(\tilde H\) we obtain from Proposition \ref{prop:localrep-comm-rel}. Then we have
  \begin{align*}
    \pi^{\tilde H}(h)\tilde\pi_1(h_1\exp(y))\pi^{\tilde H}(h)^{-1} &= \pi^{\tilde H}(h)\pi_1^c(\gamma(h_1))e^{-i\p\pi_1^c(iy)}\pi^{\tilde H}(h)^{-1} \\
                                                                   &= \pi^{\tilde H}(h)\pi_1^c(\gamma(h_1))\pi^{\tilde H}(h)^{-1}e^{-i\p\pi_1^c(i\Ad(h)y)} \\
                                                                   &= \pi_1^c(h.\gamma(h_1))e^{-i\p\pi_1^c(i\Ad(h)y)} \\
                                                                   &= \pi_1^c(\gamma(h.h_1))e^{-i\p\pi_1^c(i\Ad(h)y)} \\
                                                                   &= \tilde\pi_1(h.(h_1\exp(y))),
  \end{align*}
  where \(\tilde H\) acts by the integrated adjoint representation.
  Note that \(\Gamma_1(C)\) is in fact invariant under the action of \(\tilde H\) because \(C\) is \(e^{\ad \fh}\)-invariant by Proposition \ref{prop:localrep-comm-rel}(b).
  The above computation shows that
  \[\nu: \tilde H \ltimes \Gamma_1(C) \rightarrow B(\cH), \quad \nu(h,s) = \pi^{\tilde H}(h)\tilde\pi_1(s)\]
is a representation of \(\tilde H \ltimes \Gamma_1(C)\).
By Theorem \ref{thm:localrep}, \(\nu\) satisfies the conditions of Corollary \ref{cor:Olsh-H-H1-rep}, so that we obtain a representation of the Olshanski semigroup \(\Gamma(C)\) by
\[\tilde\pi : \Gamma(C) \rightarrow B(\cH), \quad q_H(h)\exp(y) \rightarrow \pi^{\tilde H}(h)\tilde\pi_1(\exp(y)).\]
Because of \((\L^o(S) \cap \fq)^o \subset C\) (cf.\ Proposition \ref{prop:ol-q-inner-pts}), the representation \(\tilde\pi\) is an extension of \(\pi\) on \(\exp((\L^o(S) \cap \fq)^o) \subset \exp(C)\) in the sense of \eqref{eq:Ol-semgrp-ext-exp}.
\end{proof}

\begin{cor}
  \label{cor:Ol-semgrp-ancont}
  With the notation of {\rm Theorem \ref{thm:Ol-semgrp-ext}}, the analytic continuation of \((\pi,\cH)\) to \(G^c\) and the analytic continuation of \((\tilde\pi,\cH)\) to \(G^c\) coincide. 
\end{cor}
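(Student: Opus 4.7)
The plan is to invoke Theorem~\ref{thm:localrep}: the analytic continuation $\pi^c$ (resp.\ $\tilde\pi^c$) is determined uniquely by the pair consisting of its restriction $\pi_1^c$ (resp.\ $\tilde\pi_1^c$) to $G_1^c$ from Theorem~\ref{thm:si-integration} and of the unitary representation $\pi^{\tilde H}$ (resp.\ $\tilde\pi^{\tilde H}$) of $\tilde H$ from Proposition~\ref{prop:localrep-comm-rel}. It therefore suffices to verify that $\pi_1^c = \tilde\pi_1^c$ and $\pi^{\tilde H} = \tilde\pi^{\tilde H}$.

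For the first equality, we match the infinitesimal generators along $i\fq$. The construction of $\tilde\pi$ in the proof of Theorem~\ref{thm:Ol-semgrp-ext} yields
\[\tilde\pi(\exp(ty)) = e^{-it\p\pi_1^c(iy)} \qquad \text{for every } y \in C \text{ and } t > 0.\]
On the other hand, since $C \subset \L^o(\Gamma(C)) \cap \fq$, fixing $y_0 \in C$ and applying Theorem~\ref{thm:localrep-analytic-cont}(a) to $\tilde\pi$ with $s = \exp(y_0)$ and $y \in \fq$ gives $\tilde\pi(\exp(ty)\exp(y_0)) = e^{-it\p\tilde\pi_1^c(iy)}\tilde\pi(\exp(y_0))$ for small $|t|$. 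Taking $y = y_0$ and comparing the two formulas for $\tilde\pi(\exp((1+t)y_0))$ forces $e^{-it\p\pi_1^c(iy_0)} = e^{-it\p\tilde\pi_1^c(iy_0)}$ locally, hence $\p\pi_1^c(iy_0) = \p\tilde\pi_1^c(iy_0)$ by Stone's theorem. Since $y_0 \in C$ was arbitrary and $C$ is open in $\fq$, the set $iC$ spans $i\fq$, which Lie-generates $\g_1^c = [\fq,\fq] \oplus i\fq$. As $G_1^c$ is connected, the one-parameter subgroups $\exp_{G_1^c}(\R \cdot iy)$, $y \in C$, generate $G_1^c$ as a group, so multiplicativity of both representations yields $\pi_1^c = \tilde\pi_1^c$.

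For the second equality, note that the construction of $\tilde\pi$ in Theorem~\ref{thm:Ol-semgrp-ext} already uses $\pi^{\tilde H}$, so that
\[\tilde\pi(q_H(h)\exp(y)) = \pi^{\tilde H}(h)\tilde\pi(\exp(y)) \qquad \text{for all } h \in \tilde H,\ y \in C.\]
Since $C \subset \fq \cap \L^o(\Gamma(C))$ is non-empty, Corollary~\ref{cor:semigrp-rep-liecone} shows that $\tilde\pi$ is locally $H$-compatible and Corollary~\ref{cor:semigrp-rep-liecone-curve} gives that $\tilde\pi(\exp(y))\cH$ is dense in $\cH$ for each $y \in C$. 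Hence on a symmetric open $\1$-neighborhood $B \subset \tilde H$ chosen so that $q_H(B)\exp(y) \subset \Gamma(C)$ for some $y \in C$, the defining relation of $\tilde\pi^{\tilde H}$ from Proposition~\ref{prop:localrep-comm-rel}(a) is satisfied by $\pi^{\tilde H}$; uniqueness in Lemma~\ref{lem:localrep-H-op} yields $\tilde\pi^{\tilde H}\vert_B = \pi^{\tilde H}\vert_B$, and continuity together with the group law extend this to all of $\tilde H$.

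Combining the two identifications, Theorem~\ref{thm:localrep} yields $\pi^c = \tilde\pi^c$. The main subtle step is the generation argument in the second paragraph, where the matched generators on $iC$ must be promoted to equality of representations on all of $G_1^c$; the remaining pieces follow directly from the explicit construction of $\tilde\pi$.
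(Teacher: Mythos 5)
Your proposal is correct, but it takes a different route from the paper. The paper's own proof is two lines: after reducing to the case of a $1$-connected $G$ by pulling $\pi$ back along the universal covering, it combines the agreement \eqref{eq:Ol-semgrp-ext-exp} of $\pi$ and $\tilde\pi$ on $\exp\bigl((\L^o(S)\cap\fq)^o\bigr)$ with Proposition \ref{prop:analytic-cont-undet}. You instead unpack the uniqueness clause of Theorem \ref{thm:localrep} and match the two ingredients separately: the $G_1^c$-parts by comparing the selfadjoint generators $-i\p\pi_1^c(iy)$ and $-i\p\tilde\pi_1^c(iy)$ on the generating cone $iC$ of $i\fq$, and the $\tilde H$-parts via the uniqueness in Lemma \ref{lem:localrep-H-op} against the dense range of $\tilde\pi(\exp(y))$. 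This is essentially an inline re-proof of Proposition \ref{prop:analytic-cont-undet} adapted to the present situation, and it has the merit of not requiring the two representations to agree on an open $G$-neighborhood of $\exp(y_0)$ (which is what the hypotheses of Proposition \ref{prop:analytic-cont-undet} literally ask for, whereas \eqref{eq:Ol-semgrp-ext-exp} only gives agreement on the image of an open subset of $\fq$); in that sense your argument is more self-contained and slightly more careful than the paper's. Two small points: the identity $\tilde\pi(\exp(ty))=e^{-it\p\pi_1^c(iy)}$ for $y\in C$, $t>0$, which you take from the construction, deserves the one-line justification that $\tilde\pi(\exp(ty))=\tilde\pi_1(\exp(ty))=\hat\pi(\gamma(\exp(ty)))$ with $\hat\pi$ the Olshanski semigroup representation generated by $\pi_1^c$; and the step identifying the generators from $e^{tA}=e^{tB}$ for small $t>0$ is the uniqueness of generators of $C^0$-semigroups (or of selfadjoint semigroups via spectral calculus) rather than Stone's theorem, which concerns unitary one-parameter groups. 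Neither affects the validity of the argument.
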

\begin{proof}
  Let \(q_G : \tilde G \rightarrow G\) be a universal covering of \(G\).
  We may assume that \(G\) is 1-connected because the analytic continuation of the representation \((\pi \circ q_G, \cH)\) of \(q_G^{-1}(S)\) to \(G^c\) coincides with the analytic continuation of \(\pi\) to \(G^c\). 
  Then the claim follows from \eqref{eq:Ol-semgrp-ext-exp} and Proposition \ref{prop:analytic-cont-undet}.
\end{proof}

\section{Examples}
\label{sec:examples}

In this section, we consider various examples of analytic continuations of \(*\)-representations of semigroups.

\begin{example}
  \label{ex:one-dim}
  The simplest non-trivial example is the one-dimensional case where \(G = \R,\, S = (c,\infty)\) for some \(c \geq 0\), and \(\tau = -\id_\R\). In this case, any strongly continuous non-degenerate representation \(\pi: S \rightarrow B(\cH)\) is selfadjoint and can be uniquely extended to a strongly continuous representation \(\tilde \pi: [0,\infty) \rightarrow B(\cH)\) with \(\tilde \pi(0) = \id_\cH\) (cf.\ Proposition \ref{prop:cont-op-semgrps}). The analytic continuation (Theorem \ref{thm:localrep-analytic-cont}) is given by
  \[\pi^c : \R \rightarrow \U(\cH), \quad t \mapsto e^{itA}\]
  where \(A\) is the infinitesimal generator of \(\tilde\pi\), i.e., \(\tilde\pi(t) = e^{tA}\) for \(t \geq 0\).
\end{example}

The main motivation for studying the analytic continuation problem in the first place comes from the field of reflection positivity which we mentioned in the introduction: A Hilbert space \(\cE\) is called a \emph{reflection positive Hilbert space} if there exists a unitary involution \(\theta \in \U(\cH)\) and a closed subspace \(\cE_+\) such that
\[\la \xi, \xi\ra_\theta := \la \xi, \theta \xi\ra \geq 0 \quad \text{for all } \xi \in \cE_+.\]
The space \(\cE_+\) is called \emph{\(\theta\)-positive}. We then obtain a scalar product on the quotient
\[\cE_+/\cN \quad \text{by} \quad \cN := \{\eta \in \cE_+ : \la \eta, \theta \eta\ra = 0\},\]
via \(\|\hat{v}\|_{\hat{\cE}} := \sqrt{\langle v, \theta v\rangle}\), where \(\hat{v}\) denotes the image of \(v \in \cE_+\) under the canonical quotient map \(\cE_+ \rightarrow \cE_+ / \cN\).
Completing \(\cE_+ / \cN\) with respect to this scalar product leads to a Hilbert space \(\hat{\cE}\). We write reflection positive Hilbert spaces as triples \((\cE,\cE_+,\theta)\).

Consider now a symmetric Lie group \((G,\tau)\) and a unitary representation \((\pi,\cE)\) of the semidirect product \(G_\tau = G \rtimes \{\1,\tau\}\) on the reflection positive Hilbert space \((\cE,\cE_+,\theta)\) with \(\pi(\tau) = \theta\). Then the restriction of \(\pi\) to the \(*\)-semigroup
\[S := \{g \in G : \pi(g)\cE_+ \subset \cE_+\}\]
factors through a strongly continuous contraction representation of \(S\) on \(\hat\cE\) (cf.\ \cite[Prop.\ 3.3.3]{NO18}). If \(S\) has a non-empty interior, then we can apply the analytic continuation Theorems \ref{thm:si-integration} and \ref{thm:localrep} to obtain a strongly continuous unitary representation of \(G_1^c\) or \(G^c\) on \(\hat\cE\).

\begin{example}
  Let \(\cH\) be a complex Hilbert space. A \emph{standard subspace} \(V\) is a closed real subspace of \(\cH\) such that \(V + iV\) is dense in \(\cH\) and \(V \cap iV = \{0\}\). The set of standard subspaces of \(\cH\) is denoted by \(\Stand(\cH)\). There is a one-to-one correspondence between \(\Stand(\cH)\) and the set of \emph{modular objects}, which consists of pairs \((\Delta,J)\), where \(\Delta\) is a positive operator on \(\cH\) and \(J\) is an antiunitary involution on \(\cH\) satisfying \(J\Delta J = \Delta^{-1}\). For \(V \in \Stand(\cH)\), the corresponding modular pair \((\Delta_V,J_V)\) is obtained by taking the polar decomposition of the conjugation operator
  \[S : V + iV \rightarrow \cH, \quad x + iy \mapsto x - iy,\]
  i.e.\ \(S = J\Delta^{1/2}\) (cf.\ \cite{Lo08}).

  Let \(V \in \Stand(\cH)\). Then we have the relation
  \[\la v, Jv \ra = \la v, \Delta^{1/2}v\ra \geq 0 \quad \text{for all } v \in V.\]
  The triple \((\cH, V, J)\) thus becomes a real reflection positive Hilbert space. The subspace \(\cN = \linebreak\{\eta \in V : \la \eta, J \eta\ra = 0\}\) is trivial because
  \[\la v, J v\ra = \|\Delta^{1/4}v\|^2 \quad \text{for } v \in V\]
  and \(\Delta^{1/4}\) is injective. The Hilbert space \(\hat V\) corresponding to \((\cH,V,J)\) can be identified with
  \[\oline{\Delta^{1/4}V} = \cH^J := \{v \in \cH : Jv = v\}.\]
  Let now \((G,\varepsilon)\) be a \emph{graded Lie group}, i.e.\ \(G\) is a Lie group and \(\varepsilon : G \rightarrow \{-1,1\}\) is a continuous group homomorphism. Furthermore, let \((\pi,\cH)\) be a strongly continuous antiunitary representation of \(G\), i.e.\ \(\pi(g)\) is linear if and only if \(\varepsilon(g) = 1\). Let \(\gamma \in \Hom(\R^\times,G)\) such that \(\pi(\gamma(-1))\) is antiunitary. By setting
  \[J := \pi(\gamma(-1)) \quad \text{and} \quad \Delta^{-it/2\pi} := \pi(\gamma(e^t)), \quad t \in \R,\]
  we obtain a modular pair \((\Delta,J)\) and thus a standard subspace \(V_\gamma\). The passage from \(\gamma\) to \(V_\gamma\) for a fixed antiunitary representation \(\pi\) is known as the Brunetti--Guido--Longo-map (cf.\ \cite[Cor.\ 2.4]{Ne17}). We define an involutive automorphism on \(G\) by \(\tau(g) := \gamma(-1)g\gamma(-1)\). Through the procedure outlined above, we then obtain a strongly continuous \(*\)-representation \((\hat\pi, \cH^J)\) of the semigroup
  \[S_V := \{g \in G_1 : \pi(g)V \subset V\}.\]
  The group \(G_1 := \varepsilon^{-1}(\{1\})\) acts on the set of standard subspaces by the representation \(\pi\) and the semigroup \(S_V\) contains all information about the inclusions of standard subspaces on the orbit \(\pi(G_1)V\). If \(S_V^o\) is non-empty  and \(\L^o(S_V^o) \cap \fq\) has inner points, then the semigroup \(\Gamma(C)\) we obtain from Theorem \ref{thm:Ol-semgrp-ext} provides additional insight about the original semigroup \(S_V\).
\end{example}

Let \((G,\tau)\) be a symmetric Lie group with Lie algebra \(\g = \fh \oplus \fq\) and suppose  that \(\g\) is 3-graded, i.e.\ there exists a decomposition \(\fq = \fq_- \oplus \fq_+\) of \(\fq\) into abelian subalgebras such that \(\g = \fq_- \oplus \fh \oplus \fq_+\). Such decompositions appear for instance in the theory of non-Riemannian semisimple symmetric spaces (cf.\ \cite{HO96}). For an open convex cone \(C = C_- \oplus C_+ \subset \fq_- \oplus \fq_+\), the semigroup \(S_C := \oline{\la \exp(C_-)\exp(C_+)\ra}\) is \(*\)-invariant and \(C_-,C_+ \subset \L(S_C) \cap \fq\). In particular, there are cases for which \(S_C \neq H\exp(C)\):

\begin{lemma}
  \label{lem:ex-global-semgrp}
  Let \((G,\tau)\) be a symmetric Lie group with Lie algebra \(\g = \fh \oplus \fq\). Let \(W \subset \g\) be an \(e^{\ad \g}\)-invariant wedge and set \(C := W \cap \fq\). Furthermore, let \(W_1 \subsetneq \fh \oplus C\) be a closed convex cone such that
  \begin{enumerate}
    \item \(W_1 \cap (\fh \oplus H(C))\subset H(W_1)\),
    \item \((-\L(\tau))(W_1) = W_1\), and
    \item \(H(W_1)\) is global.
  \end{enumerate}
  Then \(S_{W_1} = \oline{\la \exp(W_1)\ra}\) is a \(*\)-invariant subsemigroup of \(G\) with \(\L(S_{W_1}) = W_1 \neq \fh + C = \L(\Gamma(C))\).
\end{lemma}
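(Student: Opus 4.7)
The $*$-invariance of $S_{W_1}$ is immediate from condition~(2): for $x \in W_1$, that condition gives $-\L(\tau)(x) \in W_1$, so
\[
(\exp x)^* \;=\; \exp(-\L(\tau)(x)) \;\in\; \exp(W_1),
\]
whence $\exp(W_1)$ and thus $S_{W_1} = \overline{\langle \exp W_1\rangle}$ are $*$-invariant. The inequality $W_1 \neq \fh + C$ follows at once from the strict hypothesis $W_1 \subsetneq \fh \oplus C$ together with $\L(\Gamma(C)) = \fh + C$ (cited after Theorem~\ref{thm:Lawson}).

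The substantive claim is $\L(S_{W_1}) = W_1$. The inclusion $W_1 \subseteq \L(S_{W_1})$ is trivial. For the reverse, I would first observe that $W_1 \subseteq \L(\Gamma(C))$ forces $S_{W_1} \subseteq \Gamma(C)$ by closedness of $\Gamma(C)$, so $\L(S_{W_1}) \subseteq \fh + C$. Condition~(3) provides the closed connected subgroup $L := \langle \exp H(W_1)\rangle \subseteq S_{W_1}$ with $\L(L) = H(W_1)$, sitting inside the unit group of $S_{W_1}$, so $H(W_1) \subseteq H(\L(S_{W_1}))$.

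To rule out additional tangent directions beyond $W_1$, I would pass to the quotient by $H(W_1)$. Conditions~(1) and~(2) together force the image $\pi(W_1)$ in $(\fh + C)/H(W_1)$ to be pointed: any $x \in W_1$ whose class lies in $\pi(\fh \oplus H(C))$ already sits in $\fh \oplus H(C) \cap W_1 \subseteq H(W_1)$ by~(1), while $(-\L(\tau))$-invariance controls the candidates for edge points. Combined with the $e^{\ad \g}$-invariance of $\fh + C$, one checks that $W_1$ is a Lie wedge whose image in $(\fh + C)/H(W_1) \subseteq \g/H(W_1)$ satisfies the hypotheses of a globality criterion for pointed Lie wedges (see Hilgert--Neeb \cite[Ch.~III]{HN93}), yielding $\L(S_{W_1})/H(W_1) = \pi(W_1)$ and hence $\L(S_{W_1}) \subseteq W_1 + H(W_1) = W_1$.

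The main obstacle is this quotient/globality step. One must verify that condition~(1) is actually strong enough to force pointedness modulo $H(W_1)$ without the full $\L(\tau)$-invariance of $W_1$ (only the twisted version in~(2) is assumed), and one must identify the correct form of the globality theorem applicable in the quotient $(\fh + C)/H(W_1)$, which need not inherit all structural properties of the ambient symmetric Lie algebra. If the direct quotient approach is insufficient, a fallback is to work inside the closed subsemigroup $\Gamma(C)$ directly, using the polar decomposition from Theorem~\ref{thm:Ol-real} to split off the $L$-factor and then reduce to a globality question for a pointed $*$-invariant wedge, which is the classical situation.
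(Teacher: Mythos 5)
Your treatment of the easy parts is fine: the $*$-invariance of $S_{W_1}$ does follow from condition (2) exactly as you say, $W_1 \neq \fh + C$ is immediate from $W_1 \subsetneq \fh \oplus C$, the inclusion $W_1 \subseteq \L(S_{W_1})$ is trivial, and the containment $S_{W_1} \subseteq \Gamma(C)$ (hence $\L(S_{W_1}) \subseteq \fh + C$) is correct. You have also correctly located the crux — a globality statement for $W_1$ relative to the ambient global wedge $\fh + C$ — but you leave that step unresolved, explicitly flagging the quotient/globality argument as an obstacle you cannot yet close. As written, the proof is therefore incomplete at its decisive point.

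The missing ingredient is a single citation: the paper invokes Theorem \ref{thm:Ol-real} to get that $W' := \fh + C$ is global with $S_{W'} = \Gamma(C)$, and then applies the globality criterion \cite[Prop.\ 1.37]{HN93}, whose hypotheses are precisely your data: $W_1$ is a wedge contained in the global wedge $W'$, its edge $H(W_1)$ is global (condition (3)), and $W_1 \cap H(W') \subseteq H(W_1)$ — which is exactly condition (1), since $H(\fh + C) = \fh \oplus H(C)$. This yields $\L(S_{W_1}) = W_1$ directly, with no need to pass to the quotient by $H(W_1)$ or to establish pointedness there; your worry about whether condition (1) suffices "without the full $\L(\tau)$-invariance" dissolves, because condition (2) plays no role in the globality argument at all (it is only needed for $*$-invariance). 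Your proposed fallback of working inside $\Gamma(C)$ and reducing to a relative globality question is in substance what this proposition does, so the gap is not a wrong idea but an unexecuted one: identify and apply \cite[Prop.\ 1.37]{HN93} and the proof is complete.
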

\begin{proof}
  By Theorem \ref{thm:Ol-real}, the wedge \(W' := \fh + C\) is global in \(\g\) and \(S_{W'} = \Gamma(C)\). Hence, by \cite[Prop.\ 1.37]{HN93}, the wedge \(W_1\) is global in \(G\) and thus we have \(\L(S_{W_1}) = W_1\).
\end{proof}

\begin{example}
  \label{ex:cayley-type}
  Let \((G,\tau)\) be a symmetric Lie group and suppose that its Lie algebra \(\g = \fq_- \oplus \fh \oplus \fq_+\) is 3-graded. Let \(W\) and \(C\) be as in Lemma \ref{lem:ex-global-semgrp} with \(C = C_- \oplus C_+ \subset \fq_- \oplus \fq_+\). If \(C\) is pointed, then the conditions of Lemma \ref{lem:ex-global-semgrp} are satisfied for the cone \(W_1 := C\), so that 
  \[S_C = \oline{\la\exp(C_-)\exp(C_+)\ra} = \oline{\la\exp(C)\ra} \neq \Gamma(C).\] 

  We give a concrete example: Let \(G = \tilde\SL_2(\R)\) be the universal covering group of \(\SL_2(\R)\) and let \(\tau \in \Aut(G)\) be the integral of the automorphism
  \[\L(\tau): \fsl_2(\R) \rightarrow \fsl_2(\R), \quad \pmat{x & y \\ z & -x} \mapsto \pmat{x & -y \\ -z & -x}.\]
  Then
  \[\fh = \R \pmat{1 & 0 \\ 0 & -1}, \quad \fq_- = \R\pmat{0 & 0 \\ 1 & 0}, \quad \fq_+ = \R\pmat{0 & 1 \\ 0 & 0}.\]
  Let \(\omega(x,y) := x_1y_2 - x_2y_1\) be the standard symplectic form on \(\R^2\). Since the cone
  \[W := \left\{A = \pmat{x & y \\ z & -x} \in \fsl_2(\R) : (\forall v \in \R^2)\, \omega(Av,v) \geq 0\right\}\]
  is \(e^{\ad \g}\)-invariant, the convex cone \(C := W \cap \fq = \R_{\geq 0}\pmat{0 & 1 \\ 0 & 0} + \R_{\geq 0}\pmat{0 & 0 \\ -1 & 0}\) satisfies the premises of Lemma \ref{lem:ex-global-semgrp}. Furthermore, \(C\) is pointed and weakly hyperbolic, so that Lawson's Theorem \ref{thm:Lawson} implies that \(H\exp(C^o)\) is an Olshanski semigroup. Since \(C\) is global in \(G\), we have
  \[S_C = \oline{\la\exp(C_-)\exp(C_+)\ra} = \oline{\la\exp(C)\ra}.\]
  Let \(S := S_C^o\) and let \(\pi : S \rightarrow B(\cH)\) be a continuous non-degenerate \(*\)-representation of \(S\) on a complex Hilbert space \(\cH\). Then, by Theorem \ref{thm:Ol-semgrp-ext}, there exists a unique extension of \(\pi\) to a strongly continuous non-degenerate representation \(\tilde\pi: \Gamma(C) \rightarrow B(\cH)\). The analytic continuations to \(G^c \cong G\) of \(\pi\) and \(\tilde\pi\) coincide by Corollary \ref{cor:Ol-semgrp-ancont}. 
\end{example}

\appendix
\section{Differentiable vectors and generators}
\label{sec:diffvectors}
Let \(\cH\) be a Hilbert space, \(G\) be a finite-dimensional Lie group, and \((\pi,\cH)\) be a strongly continuous unitary representation of \(G\). For \(k \in \N_0\), we denote by \(\cH^k\) the space of vectors \(v \in \cH\) such that the orbit map \(\pi^v : G \rightarrow \cH\) is \(C^k\). We denote the space of smooth vectors by \(\cH^\infty\).

For \(x \in \g = \L(G)\), we define \(\cD_x := \{v \in \cH : \derat{0} \pi(\exp(tx))v \,\text{ exists}\}\) and
\[\p\pi(x) : \cD_x \rightarrow \cH, \quad \p\pi(x)v := \derat{0} \pi(\exp(tx))v.\]
By restricting to \(\cH^\infty\), we obtain a Lie algebra representation
\[\dd\pi : \g \rightarrow \End(\cH^\infty), \quad x \mapsto \dd\pi(x) := \p\pi(x)\lvert_{\cH^\infty}\]
by essentially skew-adjoint operators and we have
\begin{equation}
  \label{eq:diffvectors-hinf-dinf}
  \cH^\infty = \cD^\infty = \bigcap_{n \in \N,\, x_i \in \g} \cD(\p\pi(x_1),\ldots,\p\pi(x_n))
\end{equation}
(cf.\ \cite[Lem.\ 3.4]{Ne10}). Since \(\cH^\infty\) is \(\pi\)-invariant and dense in \(\cH\) (cf.\ \cite[Prop.\ 4.4.1.1]{Wa72}), we have \(\oline{\dd\pi(x)} = \partial\pi(x)\) by Stone's theorem.

\begin{prop}
  \label{prop:smoothvectors-dual}
  Let \(\cH^{-\infty}\) be the space of antilinear functionals on \(\cH^\infty\). We consider \(\cH\) as a subspace of \(\cH^{-\infty}\) by setting \(v(w) := \la w, v \ra\) for \(v \in \cH, w \in \cH^\infty\). Let
  \begin{equation}
    \label{eq:diffvectors-dualrep}
  \dd\pi^{-\infty} : \g \rightarrow \End(\cH^{-\infty}), \quad (\dd\pi^{-\infty}(x)\lambda)(w) := -\lambda(\dd\pi(x)w)
  \end{equation}
  be the \emph{dual representation} of \(\g\). Then we have for all \(x \in \g\):
  \[\cD(\p\pi(x)) = \{v \in \cH : \dd\pi^{-\infty}(x)v \in \cH\}.\]
  In particular, all \(v \in \cD(\p\pi(x))\) satisfy \(\p\pi(x)v = \dd\pi^{-\infty}(x)v\) as elements of \(\cH^{-\infty}\).
\end{prop}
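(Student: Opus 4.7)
The plan is to prove both inclusions by a direct duality argument that hinges on two standard facts about $\partial\pi(x)$: by Stone's theorem it is skew-adjoint, i.e. $\partial\pi(x)^* = -\partial\pi(x)$, and by the remark preceding the proposition the subspace $\cH^\infty$ is a core of $\partial\pi(x)$ with $\overline{\dd\pi(x)} = \partial\pi(x)$. These two facts are precisely what allows one to transfer between the pairing $(\dd\pi^{-\infty}(x)v)(w)$ on $\cH^\infty$ and the Hilbert space pairing with $\partial\pi(x)v$.

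For the forward inclusion I would take $v \in \cD(\partial\pi(x))$ and any $w \in \cH^\infty$. Unfolding \eqref{eq:diffvectors-dualrep} gives
\[
(\dd\pi^{-\infty}(x)v)(w) \;=\; -v(\dd\pi(x)w) \;=\; -\langle \dd\pi(x)w,\, v\rangle \;=\; -\langle \partial\pi(x)w,\, v\rangle.
\]
Since $w\in\cD(\partial\pi(x))$, $v\in\cD(\partial\pi(x))$, and $\partial\pi(x)^*=-\partial\pi(x)$, this equals $\langle w,\,\partial\pi(x)v\rangle = (\partial\pi(x)v)(w)$. Hence $\dd\pi^{-\infty}(x)v$ coincides on $\cH^\infty$ with the vector $\partial\pi(x)v\in\cH$ (viewed as an element of $\cH^{-\infty}$), which in particular proves the asserted equality of the two whenever $v\in\cD(\partial\pi(x))$.

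For the reverse inclusion I would start from $v\in\cH$ with $u:=\dd\pi^{-\infty}(x)v \in \cH$. By definition, $\langle w,u\rangle = -\langle \partial\pi(x)w, v\rangle$ for every $w\in\cH^\infty$. Because $\cH^\infty$ is a core of $\partial\pi(x)$, this identity extends by continuity to every $w\in\cD(\partial\pi(x))$ after approximating by a sequence in $\cH^\infty$ on which both $w_n$ and $\partial\pi(x)w_n$ converge. This precisely says $v\in\cD(\partial\pi(x)^*)$ with $\partial\pi(x)^*v = -u$, and skew-adjointness then gives $v\in\cD(\partial\pi(x))$ with $\partial\pi(x)v = u = \dd\pi^{-\infty}(x)v$.

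No step looks like a serious obstacle: the only point that requires a moment's care is the core extension in the backward direction, which is completely standard once one recalls that $\dd\pi(x)\subset\partial\pi(x)$ has closure $\partial\pi(x)$. The essence is that the defining identity of the dual representation is, on smooth vectors, nothing but the skew-symmetry relation for $\partial\pi(x)$, so the two characterizations of $\cD(\partial\pi(x))$ must coincide.
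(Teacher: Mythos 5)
Your proof is correct and follows essentially the same route as the paper: both directions reduce to the essential skew-adjointness of \(\dd\pi(x)\lvert_{\cH^\infty}\), i.e. \(\partial\pi(x) = -(\dd\pi(x)\lvert_{\cH^\infty})^*\). Your explicit core-approximation step in the reverse inclusion is a slightly more verbose packaging of what the paper gets directly from the definition of the adjoint of the restriction to \(\cH^\infty\), but it is sound.
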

\begin{proof}
  Recall that \(\p\pi(x) = - (\dd\pi(x))^* = \oline{\dd\pi(x)}\). Let \(v \in \cH\) such that \(u := \dd\pi^{-\infty}(x)v \in \cH\). This is equivalent to
  \[ - \la \dd\pi(x)w, v \ra = \la w, u \ra \quad \text{for all } w \in \cH^\infty,\]
  and this implies that \(v \in \cD((\dd\pi(x)\lvert_{\cH^\infty})^*) = \cD(\p\pi(x))\) and \(\p\pi(x)v = u\).

  Conversely, let \(v \in \cD(\p\pi(x))\). Then we have
  \[(\dd\pi^{-\infty}(x)v)(w) = -\la \dd\pi(x)w, v \ra = \la w, \p\pi(x)v\ra = (\p\pi(x)v)(w) \quad \text{for all } w \in \cH^\infty.\]
  In particular, we have \(\dd\pi^{-\infty}(x)v = \p\pi(x)v\) as elements of \(\cH^{-\infty}\).
\end{proof}

\begin{cor}
  \label{cor:smoothvectors-invsubspace}
  Let \(V \subset \cH\) be a subspace. Then
  \[\g_V := \{x \in \g : V \subset \cD(\p\pi(x)) \text{ and } \p\pi(x)V \subset V\}\]
  is a Lie subalgebra of \(\g\) and
  \[\g_V \rightarrow \End(V), \quad x \mapsto \p\pi(x)\lvert_V,\]
  is a Lie algebra representation of \(\g_V\).
\end{cor}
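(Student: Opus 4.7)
The plan is to reduce everything to the genuine Lie algebra representation $\dd\pi^{-\infty}$ on $\cH^{-\infty}$ provided by Proposition \ref{prop:smoothvectors-dual}. That proposition identifies $\cD(\p\pi(x))$ with the set of $v \in \cH$ such that $\dd\pi^{-\infty}(x)v$ again lies in $\cH$, and guarantees that $\p\pi(x)v = \dd\pi^{-\infty}(x)v$ in that case. Since $\dd\pi^{-\infty}$ is an honest representation by linear endomorphisms of $\cH^{-\infty}$, linearity and the bracket relation already hold there; the whole task is to check that the relevant vectors remain in $V \subset \cH$.

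First I would handle closedness under scalar multiples and sums. Given $x,y \in \g_V$, $\alpha,\beta \in \R$, and $v \in V$, both $\dd\pi^{-\infty}(x)v = \p\pi(x)v$ and $\dd\pi^{-\infty}(y)v = \p\pi(y)v$ lie in $V \subset \cH$ by hypothesis. By linearity of $\dd\pi^{-\infty}$, this gives $\dd\pi^{-\infty}(\alpha x + \beta y)v = \alpha\,\p\pi(x)v + \beta\,\p\pi(y)v \in V \subset \cH$, so Proposition \ref{prop:smoothvectors-dual} places $v$ in $\cD(\p\pi(\alpha x + \beta y))$ with $\p\pi(\alpha x + \beta y)v = \alpha\,\p\pi(x)v + \beta\,\p\pi(y)v \in V$. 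Hence $\alpha x + \beta y \in \g_V$, and simultaneously this proves that $x \mapsto \p\pi(x)\lvert_V$ is linear on $\g_V$.

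Next I would treat the bracket. For $x,y \in \g_V$ and $v \in V$, the vector $\p\pi(y)v \in V$ lies in $\cD(\p\pi(x))$ since $x \in \g_V$, so $\dd\pi^{-\infty}(x)\dd\pi^{-\infty}(y)v = \p\pi(x)\p\pi(y)v \in V$; by symmetry $\dd\pi^{-\infty}(y)\dd\pi^{-\infty}(x)v = \p\pi(y)\p\pi(x)v \in V$. Because $\dd\pi^{-\infty}$ is a Lie algebra homomorphism into $\End(\cH^{-\infty})$, we obtain
\[
\dd\pi^{-\infty}([x,y])v \;=\; \p\pi(x)\p\pi(y)v - \p\pi(y)\p\pi(x)v \;\in\; V \subset \cH.
\]
Proposition \ref{prop:smoothvectors-dual} then yields $v \in \cD(\p\pi([x,y]))$ with $\p\pi([x,y])v$ equal to the right-hand side, which lies in $V$. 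Thus $[x,y] \in \g_V$, and on $V$ we have the identity $\p\pi([x,y])\lvert_V = [\p\pi(x)\lvert_V, \p\pi(y)\lvert_V]$, which is exactly the homomorphism property.

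I do not anticipate a substantial obstacle: the only subtlety is remembering that $\p\pi$ is not a priori a linear map on $\g$ because of domain issues, so one must not write $\p\pi(x) + \p\pi(y) = \p\pi(x+y)$ abstractly but only after verifying the common invariant domain $V$. Passing to $\dd\pi^{-\infty}$ via Proposition \ref{prop:smoothvectors-dual} neatly sidesteps this, reducing the statement to a bookkeeping argument inside $\cH^{-\infty}$.
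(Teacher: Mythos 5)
Your proposal is correct and follows essentially the same route as the paper: both arguments use Proposition \ref{prop:smoothvectors-dual} to identify the condition ``$V \subset \cD(\p\pi(x))$ and $\p\pi(x)V \subset V$'' with ``$\dd\pi^{-\infty}(x)V \subset V$'' and then exploit that $\dd\pi^{-\infty}$ is an honest Lie algebra representation on $\cH^{-\infty}$, so that the stabilizer of $V$ is automatically a subalgebra. The only cosmetic difference is that the paper introduces the auxiliary set $\tilde\g_V$ and proves $\g_V = \tilde\g_V$, whereas you unpack the same linearity and bracket computations directly.
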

\begin{proof}
  We consider \(V\) as a subspace of \(\cH^{-\infty}\). Since the dual representation \eqref{eq:diffvectors-dualrep} is a Lie algebra representation, the subspace
  \[\tilde\g_V := \{x \in \g : \dd\pi^{-\infty}(x)V \subset V\}\]
  is a Lie subalgebra of \(\g\). If \(x \in \tilde\g_V\), then \(\dd\pi^{-\infty}(x)V \subset V \subset \cH\) and Proposition \ref{prop:smoothvectors-dual} imply that \(V \subset \cD(\partial\pi(x))\) and \(\partial\pi(x)V = \dd\pi^{-\infty}(x)V \subset V\), i.e.\ \(x \in \g_V\). The converse inclusion \(\g_V \subset \tilde\g_V\) also follows from Proposition \ref{prop:smoothvectors-dual}. Hence \(\g_V = \tilde\g_V\) is a Lie subalgebra and \(\partial\pi(x)\lvert_V = \dd\pi^{-\infty}(x)\lvert_V\) shows that \(\partial\pi\) restricts to Lie algebra representation of \(\g_V\) on \(V\). 
\end{proof}

Let \(E \subset \g\) be a set of generators of the Lie algebra \(\g\). Then the infinitesimal generators belonging to elements of \(E\) already determine the set of smooth vectors:

\begin{prop}
  \label{prop:smoothvec-generators}
  Suppose that the subset \(E \subset \g\) generates \(\g\) as a Lie algebra. Then we have
  \[\cH^\infty = \bigcap_{n \in \N,\, x_i \in E} \cD(\p\pi(x_1),\ldots,\p\pi(x_n)).\]
\end{prop}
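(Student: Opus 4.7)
The nontrivial inclusion is $\supseteq$; the inclusion $\subseteq$ is immediate from the characterization \eqref{eq:diffvectors-hinf-dinf} of $\cH^\infty$. So set
\[
V := \bigcap_{n \in \N,\, x_i \in E} \cD(\p\pi(x_1),\ldots,\p\pi(x_n)),
\]
and let
\[
\g_V = \{x \in \g : V \subset \cD(\p\pi(x)) \text{ and } \p\pi(x)V \subset V\}
\]
be the subspace from Corollary \ref{cor:smoothvectors-invsubspace}, which is already known to be a Lie subalgebra of $\g$.

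The key step is to observe that $E \subset \g_V$. Indeed, for $x \in E$ and $v \in V$, the choice $n = 1$ in the definition of $V$ gives $v \in \cD(\p\pi(x))$. Moreover, for any $n \in \N$ and $x_1,\ldots,x_n \in E$, the vector $\p\pi(x_n)\cdots\p\pi(x_1)\p\pi(x)v$ is well-defined since all of $x, x_1,\ldots,x_n$ lie in $E$ and $v \in V$; this precisely means $\p\pi(x)v \in V$. Thus $E \subset \g_V$, and since $\g_V$ is a Lie subalgebra and $E$ generates $\g$ as a Lie algebra, we conclude $\g_V = \g$.

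Now a straightforward induction on $n$ shows that for every $n \in \N$ and every $y_1,\ldots,y_n \in \g$ one has $V \subset \cD(\p\pi(y_1),\ldots,\p\pi(y_n))$: the base case $n=1$ is just $V \subset \cD(\p\pi(y_1))$ from $y_1 \in \g_V$, and the inductive step uses $\p\pi(y_n)V \subset V$ followed by the inductive hypothesis applied to $\p\pi(y_n)v \in V$. Invoking \eqref{eq:diffvectors-hinf-dinf} then yields $V \subset \cH^\infty$, completing the proof.

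There is no real obstacle here once Corollary \ref{cor:smoothvectors-invsubspace} is available: it is exactly that result which converts the a priori very weak hypothesis (iterated domains only for letters from $E$) into Lie-algebraic invariance, bypassing the difficulties that would otherwise arise from trying to check commutator identities directly on unbounded operators. The only care needed is to keep the order of composition consistent with the definition of the iterated domains.
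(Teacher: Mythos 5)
Your proof is correct and follows essentially the same route as the paper: set $E^\infty$ equal to the intersection over tuples from $E$, apply Corollary \ref{cor:smoothvectors-invsubspace} to see that $\g_{E^\infty}$ is a Lie subalgebra containing $E$, hence all of $\g$, and conclude $E^\infty \subset \cD^\infty = \cH^\infty$ via \eqref{eq:diffvectors-hinf-dinf}. Your additional remarks on checking $E \subset \g_V$ and the closing induction only make explicit what the paper leaves implicit.
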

\begin{proof}
  Let \(E^\infty := \bigcap_{n \in \N,\, x_i \in E} \cD(\p\pi(x_1),\ldots,\p\pi(x_n))\). 
  By \eqref{eq:diffvectors-hinf-dinf}, \(\cH^\infty\) is contained in \(E^\infty\).
  In order to prove the converse inclusion, consider the set
  \[\fk := \g_{E^\infty} = \{x \in \g : E^\infty \subset \cD(\p\pi(x)) \text{ and } \p\pi(x)E^\infty \subset E^\infty\}.\]
  By Corollary \ref{cor:smoothvectors-invsubspace}, \(\fk\) is a Lie subalgebra of \(\g\) and by the definition of \(E^\infty\), we have \(E \subset \fk\).
  Hence, \(\fk = \g\), which means that \(E^\infty \subset \cD(\p\pi(x))\) and \(\p\pi(x)E^\infty \subset E^\infty\) for all \(x \in \g\), i.e.\ \(E^\infty \subset \cD^\infty = \cH^\infty\).
\end{proof}

\section{Holomorphic functions}

Throughout this section, let \(\cH\) be a complex Hilbert space.

\begin{lem}
  \label{lem:holom-exp-prod}
  Let \(A : \cD(A) \rightarrow \cH\) be a selfadjoint operator on \(\cH\) and let \(C \in B(\cH)\) such that \(e^{tA}C\) is a bounded operator on \(\cH\) for \(t = a,b \in \R\), where \(a < b\). Then the map
  \[F: \cS_{a,b} := \{z \in \C : a < \Re z < b\} \rightarrow B(\cH), \quad z \mapsto e^{zA}C,\]
  is holomorphic.
\end{lem}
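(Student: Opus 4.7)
The strategy is to decompose $A$ along the positive and negative halves of its spectrum, reducing the problem to holomorphy of two $B(\cH)$-valued functions on half-planes.

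Let $P$ denote the spectral measure of $A$ and set $P_+ := P([0, \infty))$, $P_- := P((-\infty, 0))$. Since $P_\pm$ commute with every spectral function of $A$, the identity $C = P_+ C + P_- C$ lets us write
\[
F(z)v = e^{zA}P_+ Cv + e^{zA}P_- Cv \qquad (v \in \cH).
\]
The first step of the plan is to check that this decomposition makes sense for $z \in \cS_{a,b}$. For any $v \in \cH$, $P_+ Cv$ has spectral measure supported in $[0, \infty)$, so $P_+Cv \in \cD(e^{zA})$ is equivalent to $\int_{[0,\infty)} e^{2\Re(z)\lambda}\, d\mu_{Cv}(\lambda) < \infty$; this follows from $e^{2\Re(z)\lambda} \leq e^{2b\lambda}$ on $[0, \infty)$ when $\Re z \leq b$ together with the hypothesis $Cv \in \cD(e^{bA})$. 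A symmetric argument handles $P_-Cv$ using $Cv \in \cD(e^{aA})$.

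Next, I will rewrite each summand as a product of bounded operators via the spectral-calculus identities
\[
e^{zA}P_+ C = \bigl(e^{(z-b)A}P_+\bigr)\cdot(e^{bA}C), \qquad e^{zA}P_- C = \bigl(e^{(z-a)A}P_-\bigr)\cdot(e^{aA}C).
\]
On the range of $P_+$ the operator $e^{(z-b)A}$ is bounded of norm $\leq 1$ whenever $\Re(z-b) \leq 0$, since $|e^{(z-b)\lambda}| \leq 1$ on $[0,\infty)$; together with the hypothesis that $e^{bA}C \in B(\cH)$ this expresses $e^{zA}P_+ C$ as a product of two bounded operators, and the $P_-$ summand is handled analogously.

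The key analytic step is then to show that the auxiliary maps
\[
G_+(w) := e^{wA}P_+, \qquad G_-(w) := e^{wA}P_-
\]
are $B(\cH)$-valued holomorphic on $\{\Re w < 0\}$ and $\{\Re w > 0\}$, respectively. For $G_+$, fix $w_0$ with $\Re w_0 < 0$ and choose $\delta > 0$ with $\Re w_0 + \delta < 0$. A Taylor estimate for the scalar exponential combined with the boundedness of $\lambda \mapsto \lambda^2 e^{c\lambda}$ on $[0,\infty)$ for every $c < 0$ yields, via spectral calculus,
\[
\bigl\|G_+(w) - G_+(w_0) - (w - w_0)\, AP_+\, G_+(w_0)\bigr\|_{B(\cH)} = O(|w - w_0|^2)
\]
for $w$ in a small disc around $w_0$ inside $\{\Re w < 0\}$. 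Hence $G_+$ is complex differentiable in the operator norm, and $G_-$ is treated symmetrically. Assembling the pieces, $F(z) = G_+(z-b)\,e^{bA}C + G_-(z-a)\,e^{aA}C$ is a sum of two $B(\cH)$-valued holomorphic functions on $\cS_{a,b}$, hence holomorphic.

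The main obstacle is that $z \mapsto e^{zA}$ is not itself $B(\cH)$-valued when $\spec(A)$ is unbounded both above and below, so one cannot directly appeal to a holomorphy property of an ``exponential semigroup'' map; the splitting by $P_\pm$ and the translations $w = z - b$, $w = z - a$ are what produce pieces that are simultaneously bounded and holomorphic on the full strip. A secondary point is the careful bookkeeping needed to verify the spectral-calculus composition identity $e^{(z-b)A}P_+ e^{bA} = e^{zA}P_+$, since $e^{bA}$ is unbounded but is composed with the bounded operator $e^{(z-b)A}P_+$ on the range of $P_+$, where the product rule for the functional calculus applies.
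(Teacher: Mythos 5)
Your proof is correct, but it takes a genuinely different route from the paper's. The paper first shows that \(e^{zA}C\) is bounded on the strip and that \(z \mapsto \|e^{zA}C\|\) is locally bounded, by observing that this norm is a supremum of plurisubharmonic functions depending only on \(\Re z\) and hence convex; it then checks that the matrix coefficients \(z \mapsto \la v, e^{zA}Cw\ra\) are holomorphic via the spectral integral, and concludes with the standard principle that a locally bounded, weakly holomorphic \(B(\cH)\)-valued function is holomorphic. You instead split \(C = P_+C + P_-C\) along the spectral projections for \([0,\infty)\) and \((-\infty,0)\), factor \(e^{zA}P_+C = G_+(z-b)\,e^{bA}C\) and \(e^{zA}P_-C = G_-(z-a)\,e^{aA}C\) with \(G_\pm\) contractive on the relevant half-planes, and prove norm-holomorphy of \(G_\pm\) directly by a second-order Taylor estimate, using that \(\lambda^2e^{c\lambda}\) is bounded on \([0,\infty)\) for \(c<0\). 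Your argument is more elementary and self-contained --- it avoids the plurisubharmonicity lemma and the weak-to-strong holomorphy principle, and it exhibits the complex derivative of \(F\) explicitly --- at the cost of being longer and requiring the domain bookkeeping for the factorization \(e^{(z-b)A}P_+e^{bA}Cv = e^{zA}P_+Cv\), which you correctly identify and which is justified by the product rule of the functional calculus since \(Cv\) lies in both \(\cD(e^{bA})\) and \(\cD(e^{zA}P_+)\). Both proofs use the hypothesis in the same way, namely through the equivalence of boundedness of \(e^{tA}C\) with \(C(\cH)\subset\cD(e^{tA})\) for \(t=a,b\).
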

\begin{proof}
  Let \(P\) be the spectral measure corresponding to \(A\) and set \(P^{v,w}(E) := \la P(E)v,w\ra\), where \(v,w \in \cH\) and \(E \subset \R\) is Borel-measurable. Then we have
  \[\la v, Aw \ra = \int_{-\infty}^\infty \lambda \,dP^{v,w}(\lambda) \quad \text{for } v,w \in \cD(A).\]
  According to \cite[Lem.\ 2.1]{NSZ17}, the boundedness of \(e^{tA}C\) is equivalent to \(C(\cH) \subset \cD(e^{tA})\) for \(t \in \R\). Using this spectral integral representation of \(A\), we see that this and the assumption imply that \(e^{tA}C\) is bounded for \(t \in [a,b]\). Hence, \(e^{zA}C\) is bounded for \(z \in \cS_{a,b}\) because \(e^{itA}\) is unitary for \(t \in \R\).
  It remains to show that \(F\) is holomorphic: The function
  \[f: \cS_{a,b} \rightarrow \R, \quad z \mapsto \|e^{zA}C\| = \|e^{(\Re z)A}C\| = \sup \{\|e^{zA}C\xi\|, \xi \in \cH, \|\xi\| \leq 1\}\]
  is a plurisubharmonic function because
  \[\cS_{a,b} \ni z \mapsto \|e^{\Re z}C\xi\| = \sqrt{\int_{-\infty}^\infty e^{(\Re z)\lambda}\, dP^{C\xi,\xi}(\lambda)}\]
  is plurisubharmonic for all \(\xi \in \cH\) and a supremum of a set of plurisubharmonic functions is again plurisubharmonic \cite[Lem.\ XIII 4.4(b)]{Ne00}. Since \(f(z)\) does not depend on the imaginary part of \(z\) for each \(z \in \cS_{a,b}\), \cite[Ex.\ XIII 4.3(c)]{Ne00} implies that \(f\) is convex. Hence, \(F\) is locally bounded.
  Furthermore, for each \(v,w \in \cD(A)\), the map
  \[\cS_{a,b} \rightarrow \C, \quad z \mapsto \la v, F(z)w\ra = \la v, e^{zA}Cw \ra = \int_{-\infty}^\infty e^{z\lambda}\,dP^{v,Cw}(\lambda)\]
  is holomorphic (cf.\ \cite[Prop.\ V.4.6]{Ne00}). Thus, \(F\) is holomorphic by \cite[Cor.\ A.III.5]{Ne00}.
\end{proof}

\subsection*{Acknowledgement}
I would like to thank Karl-Hermann Neeb for all the helpful discussions during my work on this topic and proof-reading of earlier versions of this article.


\end{document}